\theoremstyle{plain}
\newtheorem{theorem}{Theorem}
\newtheorem{lemma}[theorem]{Lemma}
\newtheorem{proposition}[theorem]{Proposition}
\newtheorem{corollary}[theorem]{Corollary}
\newtheorem{claim}[theorem]{Claim}
\theoremstyle{remark}
\newtheorem*{remark}{Remark}
\newtheorem*{question}{Question}
\theoremstyle{plain}
\newcommand{\pimi}[1][]{\ifthenelse{\equal{#1}{}}{\pi^{\circ}}{\pi_{#1}^{\circ}}}
\newcommand{\sigm}[1][]{\ifthenelse{\equal{#1}{}}{\sigma^{\circ}}{\sigma_{#1}^{\circ}}}
\newcommand{\rhom}[1][]{\ifthenelse{\equal{#1}{}}{\rho^{\circ}}{\rho_{#1}^{\circ}}}
\newcommand{\N}{\ensuremath{\mathbb{N}\xspace}}
\newcommand{\Z}{\ensuremath{\mathbb{Z}\xspace}}
\newcommand{\R}{\ensuremath{\mathbb{R}\xspace}}
\newcommand{\cC}{\ensuremath{\mathcal{C}\xspace}}
\newcommand{\bx}{\ensuremath{\mathbf{x}}\xspace}
\newcommand{\bzero}{\ensuremath{\mathbf{0}}\xspace}
\newcommand{\cD}{\ensuremath{\mathcal{D}}\xspace}
\newcommand{\cE}{\ensuremath{\mathcal{E}}\xspace}
\newcommand{\cH}{\ensuremath{\mathcal{H}}\xspace}
\DeclareMathOperator{\Maj}{Maj}
\DeclareMathOperator{\Supp}{supp}
\DeclareMathOperator*{\Med}{Med}
\DeclareMathOperator{\Ma}{Ma}
\DeclareMathOperator{\Me}{Me}
\DeclareMathOperator{\dM}{M}
\begin{document}

\title{ABC(T)-graphs: an axiomatic characterization of the median
  procedure in graphs with connected and G$^2$-connected medians}

\author[L. B\'en\'eteau]{Laurine B\'en\'eteau} \address{Aix-Marseille Universit\'e and
  CNRS, LIS, Marseille, France} \email{laurine.beneteau@lis-lab.fr}

\author[J.\ Chalopin]{J\' er\'emie Chalopin} \address{CNRS and
  Aix-Marseille Universit\'e, LIS, Marseille, France}
\email{jeremie.chalopin@lis-lab.fr}

\author[V. Chepoi]{Victor Chepoi} \address{Aix-Marseille Universit\'e
  and CNRS, LIS, Marseille, France} \email{victor.chepoi@lis-lab.fr}

\author[Y. Vax\`es]{Yann Vax\`es} \address{Aix-Marseille Universit\'e
  and CNRS, LIS, Marseille, France} \email{yann.vaxes@lis-lab.fr}

\begin{abstract}
  The median function is a location/consensus function that maps any
  profile $\pi$ (a finite multiset of vertices) to the set of vertices
  that minimize the distance sum to vertices from $\pi$. The median
  function satisfies several simple axioms: Anonymity (A), Betweeness
  (B), and Consistency (C).  McMorris, Mulder, Novick and Powers
  (2015) defined the ABC-problem for consensus functions on graphs as
  the problem of characterizing the graphs (called, ABC-graphs) for
  which the unique consensus function satisfying the axioms (A), (B),
  and (C) is the median function.
In this paper, we show that modular graphs with $G^2$-connected
  medians (in particular, bipartite Helly graphs) are ABC-graphs. On
  the other hand, the addition of some simple local axioms satisfied
  by the median function in all graphs (axioms (T), and (T$_2$))
  enables us to show that all graphs with connected median (comprising
  Helly graphs, median graphs, basis graphs of matroids and even
  $\Delta$-matroids) are ABCT-graphs and that benzenoid graphs are
  ABCT$_2$-graphs. McMorris et al (2015) proved that the graphs
  satisfying the pairing property (called the intersecting-interval
  property in their paper) are ABC-graphs. We prove that graphs with
  the pairing property constitute a proper subclass of bipartite Helly
  graphs and we discuss the complexity status of the recognition
  problem of such graphs.
\end{abstract}

\maketitle

\section{Introduction}
The median problem is one of the oldest optimization problems in
Euclidean geometry~\cite{LoMoWe}. Although it can be formulated for
all metric spaces, we will consider it for graphs.
Given a (not necessarily finite) connected graph $G=(V,E)$, a
\emph{profile} is any finite sequence $\pi = (x_1, \ldots, x_n)$ of
vertices of $G$.  The \emph{total distance} of a vertex $v$ of $G$ is
defined by $F_{\pi}(v)=\sum_{i=1}^n d(v,x_i)$. A vertex $v$ minimizing
$F_{\pi}(v)$ is called a \emph{median vertex} of $G$ with respect to
$\pi$ and the set of all medians vertices is called the \emph{median
  set} $\Med_G(\pi)$.  Finally, the mapping that associates to any
profile $\pi$ of $G$ its median set $\Med_G(\pi)$ is called the
\emph{median function}. Consequently, the median function
$\Med=\Med_G$ can be viewed as a particular consensus function. In the
consensus problem in social group choice, given individual preferences
of candidates one has to compute a consensual group decision that best
reflects those preferences. It is usually required that the consensus
respects some simple axioms ensuring that it remains reasonable and
rational.  However, by the classical Arrow's~\cite{Arr} impossibility
theorem, there is no consensus function satisfying natural
``fairness'' axioms.  In this respect, the Kemeny
median~\cite{Ke,KeSn} is an important consensus function satisfying
most of fairness axioms.  It corresponds to the median problem in
graphs of permutahedra. 

Following Arrow's axiomatic approach to consensus functions, one may
wish to characterize axiomatically a consensus function $M$ of a given
type among all consensus functions. Then the goal is either to
characterize $M$ by a set of axioms or to characterize the instances
for which $M$ is the unique consensus function satisfying a set of
natural axioms.  Holzman~\cite{Holzman} was the first to study a
location function axiomatically as a consensus problem.  Namely, he
characterized the barycenter function of trees, namely the function
$B$ mapping each profile $\pi=(x_1,\ldots,x_n)$ to the set of vertices
minimizing the mean $F_{\pi}(v)=\sum_{i=1}^n d^2(v,x_i)$. Notice also
that the axiomatic characterization of the barycentric function on the
line or in ${\mathbb R}^d$ is a classical problem~\cite{Acz}, having
its origins in the paper~\cite{Ko} by Kolmogorov.  Foster and
Vohra~\cite{FoVo} axiomatically characterized the median function
$\Med$ on continuous trees (where the location can be on edges), and
McMorris, Mulder and Roberts~\cite{McMoMuRo} characterized $\Med$ in
the case of combinatorial trees.  They also showed that three basic
axioms (Anonymity (A), Betweenness (B) and Consistency (C)) are
sufficient to characterize the median function of cube-free median
graphs.  Median graphs constitute the most important class of graphs
in metric graph theory~\cite{BaCh_survey}.  Those are the graphs in
which for each triplet of vertices there exists exactly one vertex
lying on shortest paths between any pair of vertices of the
triplet. The nice result of Mulder and Novick~\cite{MuNo} shows that
in fact the axioms (A), (B), and (C) characterize $\Med$ in all median
graphs. They called the consensus functions that respect the axioms
(A), (B), and (C) \emph{ABC-functions}.  Recently, McMorris, Mulder,
Novick, and Powers~\cite{McMoMuNoPo} introduced the \emph{ABC-problem}
as the characterization of graphs (which we call \emph{ABC-graphs})
with unique ABC-functions (which then necessarily coincide with
$\Med$).
Additionally to median graphs, they proved that the graphs satisfying
the intersecting-intervals property (which we call \emph{pairing
  property}) are ABC-graphs. On the other hand, they showed that the
complete graph $K_n$ admits at least two ABC-functions for any
$n \ge 3$.  Finally, they asked to find additional axioms that hold
for the median function on all graphs and which added to (A), (B), (C)
characterize $\Med$.

Bandelt and Chepoi~\cite{BaCh_median} characterized the \emph{graphs
  with connected medians}, i.e., the graphs in which all median sets
induce connected subgraphs (equivalently, for all profiles $\pi$, the
function $F_{\pi}$ is unimodal).  They also established that
several important classes of graphs (Helly graphs, basis graphs of
matroids, and weakly median graphs) are graphs with connected
medians. Recently, in~\cite{GpConMed} we generalized these results and
characterized the graphs with $G^p$-connected medians, i.e., the
graphs in which all median sets induce connected subgraphs in the
$p$th power $G^p$ of the graph $G$. We also established that some
important classes of graphs have $G^2$-connected medians (chordal
graphs, bridged graphs, graphs with convex balls, bipartite Helly
graphs, benzenoids).

In this paper, we continue the research on the ABC-problem in several
directions.  First, we prove that modular graphs with $G^2$-connected
medians (and in particular, bipartite Helly graphs) are
ABC-graphs. Second, we prove that graphs with connected medians are
ABCT-graphs, i.e., graphs in which the median function is the unique
consensus function satisfying (A), (B), (C), and a new axiom (T),
requiring that the consensus function returns all the elements of the
profile when the profile is a triplet of three pairwise adjacent
vertices. We extend (T) to a similar axiom (T$_2$) on triplets of
vertices at pairwise distance 2 and prove that benzenoids are
ABCT$_2$-graphs (but we show that the 6-cycle, which is the simplest
benzenoid, has at least two ABC-functions). Finally, we prove that the
graphs with the pairing or double-pairing property constitute a proper
subclass of bipartite Helly graphs. We characterize the graphs with
the pairing/double-pairing property as bipartite Helly graphs
satisfying a local condition on balls of radius 2.  We show that the
problem of deciding if a bipartite Helly graph satisfies the double
pairing property is in co-NP. However we do not know if one can decide
in polynomial time (or even in non-deterministic polynomial time) if a
bipartite Helly graph satisfies the double-pairing or the pairing
property.

\section{Preliminaries}\label{sec:def}

\subsection{Graphs}
All graphs $G = (V, E)$ in this paper are undirected, simple, and
connected; $V$ is the vertex-set and $E$ is the edge-set of $G$. We
write $u \sim v$ if $u, v \in V$ are adjacent.  Given a graph
$G=(V,E)$, the \emph{distance} $d_G(u,v)$ between two vertices $u$ and
$v$ is the length of a shortest $(u,v)$-path. If there is no
ambiguity, we denote $d(u,v)=d_G(u,v)$. For a vertex $v$ and a set
$A\subset V$, we denote by $d(v,A)=\min\{ d(v,x): x\in A\}$ the
distance from $v$ to the set $A$.  The \emph{interval} $I(u,v)$
between two vertices $u$ and $v$ is the set of all vertices on
shortest $(u,v)$-paths, i.e.  $I(u,v)=\{w: d(u,w)+d(w,v)=d(u,v)\}$. We
denote by $I^{\circ}(u,v)=I(u,v)\setminus \{ u,v\}$ the ``interior''
of the interval $I(u,v)$. We say that a pair of vertices $(u,v)$ of
$G$ is a \emph{2-pair} if $d(u,v)=2$.  An induced subgraph $H$ (or the
corresponding vertex-set of $H$) of a graph $G$ is \emph{gated} if for
every vertex $x$ outside $H$ there exists a vertex $x'$ in $H$ (the
\emph{gate} of $x$) such that $x'\in I(x,y)$ for any $y$ of $H$. The
gate $x'$ of $x$ is necessarily unique.
A \emph{ball} of radius $r$ and center $v$ is the set $B_r(v)$ of all
vertices $x$ of $G$ such that $d(v,x)\leq r$. A \emph{half-ball} of a
bipartite graph $G=(X\cup Y,E)$ is the intersection of a ball of $G$
with one of the two color classes $X$ or $Y$. By $N(v)$ and $N(A)$ we
denote the neighborhood of a vertex $v$ or a set $A\subset V$.  Let
$H$ be a subgraph of a graph $G$. We call $H$ an \emph{isometric
  subgraph} of $G$ if $d_H(u,v)=d_G(u,v)$ for each pair of vertices
$u,v$ of $H$.

Vertices $v_1,v_2,v_3$ of a graph $G$ form a \emph{metric triangle}
$v_1v_2v_3$ if the intervals $I(v_1,v_2), I(v_2,v_3)$, and
$I(v_3,v_1)$ pairwise intersect only in the common end-vertices, i.e.,
$I(v_i, v_j) \cap I(v_i,v_k) = \{v_i\}$ for any
$1 \leq i, j, k \leq 3$.
If $d(v_1,v_2)=d(v_2,v_3)=d(v_3,v_1)=k$, then this metric triangle is
called \emph{equilateral} of \emph{size} $k$. An equilateral metric triangle
$uvw$ of size $k$ is called \emph{strongly equilateral} if $d(v_3,x)=k$ for any vertex $x\in I(v_1,v_2)$.
A metric triangle
$v_1v_2v_3$ of $G$ is a \emph{quasi-median} of the triplet $x,y,z$
if the following metric equalities are satisfied:
\begin{align*}
  d(x,y)&=d(x,v_1)+d(v_1,v_2)+d(v_2,y),\\
  d(y,z)&=d(y,v_2)+d(v_2,v_3)+d(v_3,z),\\
  d(z,x)&=d(z,v_3)+d(v_3,v_1)+d(v_1,x).
\end{align*}
If the size of $v_1v_2v_3$ is zero, which means that $v_1$, $v_2$, and $v_3$ are the same vertex $v$,
then this vertex $v$ is called a \emph{median} of $x,y,z$.
A median may not exist and may not be unique.
On the other hand, a quasi-median of every triplet $x,y,z$ always exists.

\begin{figure}[ht]
  \begin{center}
    \includegraphics{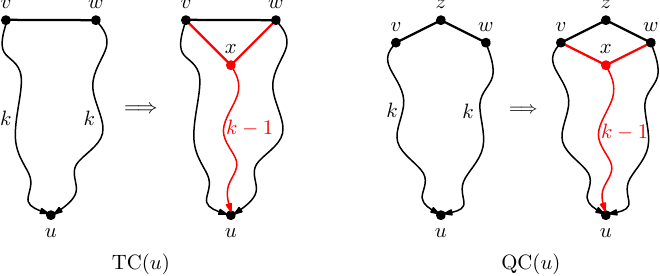}
  \end{center}
  \caption{Triangle and quadrangle conditions}\label{fig-conditions}
\end{figure}

\subsection{Classes of graphs}
In this subsection, we recall the definitions of some classes of graphs
that will be investigated in the paper.
A graph $G$ is \emph{weakly modular}~\cite{BaCh_helly,Ch_metric} if for any vertex $u$
its distance function $d$ satisfies the following triangle and quadrangle conditions
(see Figure~\ref{fig-conditions}):
\begin{itemize}
\item
  \emph{Triangle condition} TC($u$):  for any two vertices $v,w$ with
  $1=d(v,w)<d(u,v)=d(u,w)$ there exists a common neighbor $x$ of $v$
  and $w$ such that $d(u,x)=d(u,v)-1$.
\item
  \emph{Quadrangle condition} QC($u$): for any three vertices $v,w,z$ with
  $d(v,z)=d(w,z)=1$ and  $2=d(v,w)\leq d(u,v)=d(u,w)=d(u,z)-1$, there
  exists a common neighbor $x$ of $v$ and $w$ such that
  $d(u,x)=d(u,v)-1$.
\end{itemize}
All metric triangles of weakly modular graphs are equilateral. In
fact, a graph $G$ is weakly modular if and only if any metric triangle
$v_1v_2v_3$ of $G$ is strongly equilateral~\cite{Ch_metric}, i.e., if
all quasi-medians of triplets are strongly equilateral metric
triangles. If all quasi-medians are medians, i.e., metric triangles of
size 0, then such weakly modular graphs are called
\emph{modular}.  Modular graphs in which every triplet of vertices has a unique
quasi-median are called \emph{median graphs}. Median graphs
constitute the most important main class of graphs in metric graph
theory~\cite{BaCh_survey}.  Analogously to median graphs, \emph{weakly
  median graphs} are the weakly modular graphs in which all triplets
have unique quasi-medians.

Another important subclass of weakly modular graphs is the class of
Helly graphs.  A graph $G$ is a \emph{Helly graph} if the family of
balls of $G$ has the Helly property, that is, every finite collection
of pairwise intersecting balls of $G$ has a nonempty intersection.
Analogously, a \emph{bipartite Helly graph} is a bipartite graph such that the
collection of half-balls of $G$ has the Helly property. Bipartite
Helly graphs are modular.

A graph $G=(V,E)$ is called \emph{meshed}~\cite{BaCh_median} if for
any three vertices $u,v,w$ with $d(v,w)=2$, there exists a common
neighbor $x$ of $v$ and $w$ such that $2d(u,x)\le
d(u,v)+d(u,w)$. Metric triangles of meshed graphs are
equilateral~\cite{BaCh_median}. It is well-known that meshed graphs
satisfy the triangle condition (see~\cite[Lemma 2.23]{CCHO} for
example) but they do not necessarily satisfy the quadrangle
condition. All weakly modular graphs and all graphs with connected
medians are meshed~\cite{BaCh_median}.  Another important subclasses
of meshed graphs with connected medians are basis graphs of
matroids~\cite{Mau} and of even $\Delta$-matroids~\cite{Che_bas}.

A \emph{hexagonal grid} is a grid formed by a tessellation of the plane
${\mathbb R}^2$
into regular hexagons.  Let $Z$ be a cycle of the hexagonal grid. A
\emph{benzenoid system} is a subgraph of the
hexagonal grid induced by the vertices located in the closed region of the
plane bounded by $Z$.  Equivalently, a benzenoid system is a finite connected
plane graph in which every interior face is a regular
hexagon of side length 1. Benzenoid systems play an important role in
chemical graph theory~\cite{GuCy}.

\subsection{Axioms} Given a graph $G=(V,E)$, we call a \emph{profile}
any sequence $\pi = (x_1, \ldots, x_n)$ of vertices of $G$ (notice
that the same vertex may occur several times in $\pi$ or not at
all). The set $\{ 1,\ldots, n\}$ can be interpreted as a set of agents
and for each agent $i\in [n]$, $x_i$ is called the \emph{location} of
$i$ in $G$. Often the profile $\pi = (x_1, \ldots, x_n)$ is referred
to as the \emph{location profile}.  Denote by $V^*$ the set of all
profiles of finite length.  For two profiles $\pi,\pi'\in V^*$, we
write $\pi'\subseteq \pi$ if $\pi'$ is a subprofile of $\pi$.  A
\emph{consensus function} on a graph $G=(V,E)$ is any function
$L:V^* \rightarrow 2^V \setminus \varnothing$. For two profiles $\pi$
and $\rho$, denote by $\pi \rho$ the concatenation of $\pi$ and
$\rho$.  For a positive integer $k$ and a profile $\pi$, denote by
$\pi^k$ the $k$ concatenations of the profile $\pi$. When $k=2$, we
say that the profile $\pi^2$ is a \emph{double-profile}.  For a vertex
$v$, the \emph{weight} $\pi(v)$ of $v$ is the number of occurrences of
$v$ in $\pi$.  We call a vertex $v$ a \emph{majority vertex} of $\pi$
if $\pi(v)\geq\frac{1}{2}|\pi|$. If $\pi$ has a majority vertex then
we say that $\pi$ is a \emph{majority profile}.  We say that a profile
is \emph{tie} if it has two majority vertices $u$ and $v$, i.e.,
$\pi(v)=\pi(u)=\frac{1}{2}|\pi|$.  Denote by $\Maj(\pi)$ the majority
vertices of a profile $\pi$ and by $\Supp(\pi)$ the \emph{support} of
a profile $\pi$, i.e., the vertices $v$ that appear in $\pi$.

Given a profile $\pi=(x_1, \ldots, x_n)$ and a vertex $v$ of $G$, let
\[
  F_{\pi}(v)=\sum_{i=1}^n d(v,x_i)=\sum_{x\in V} \pi(x)d(v,x).
\]
A vertex $v$ of $G$ minimizing $F_{\pi}(v)$ is called a \emph{median
  vertex} of $G$ with respect to $\pi$ and the set of all medians
vertices is called the \emph{median set} $\Med(\pi)$.
Finally, the mapping $\Med$ that associates to any profile $\pi$ of $G$
its median set $\Med(\pi)$ is called the \emph{median
  function}. Clearly, $\Med$ is a consensus function.

Motivated by the axiomatic approach of~\cite{BaMo}, McMorris, Mulder,
and Roberts considered the combination of the following simple axioms
(A), (B), (C) for consensus functions on graphs~\cite{McMoMuRo}. 

\begin{enumerate}[(A)]
\item \emph{Anonymity:} for any profile
  $\pi = (x_1, x_2, \ldots, x_n)$ and any permutation $\sigma$ of
  $\{1,2,\ldots,n\}$, $L(\pi) = L(\pi^{\sigma})$ where
  $\pi^{\sigma} = (x_{\sigma(1)}, x_{\sigma(2)}, \ldots,
  x_{\sigma(n)})$.
\item \emph{Betweeness:} $L(u,v) = I(u,v)$.
\item \emph{Consistency:} for any profiles $\pi$ and $\rho$, if
  $L(\pi)\cap L(\rho) \neq \varnothing$, then
  $L(\pi\rho) = L(\pi)\cap L(\rho)$.
\end{enumerate}

The Consistency axiom is known as the Young Consistency
condition~\cite{Young74,YoLe78} and in the context of the
median function, it was first considered by Barth\'elemy and
Monjardet~\cite{BaMo}. The combination of the axioms (A), (B), (C)
have been further studied for median functions in graphs
in~\cite{McMoMuNoPo,MuNo}. 

We also consider the following triangle axioms, that are meaningful
only for graphs with triangles:
\begin{enumerate}[(T)]
\item \emph{Triangle:} for any three pairwise adjacent vertices
  $u, v, w$ of $G$, $L(u,v,w) = \{u,v,w\}$.
\end{enumerate}
\begin{enumerate}[(T$^-$)]
\item \emph{weak Triangle:} for any three pairwise adjacent vertices
  $u, v, w$ of $G$, if $u \in L(u,v,w)$, then
  $\{u,v,w\} \subseteq L(u,v,w)$.
\end{enumerate}

One can easily see that the median function $\Med$ satisfies the
axioms (A), (B), (C), (T), and (T$^-$).  The authors
of~\cite{McMoMuNoPo} termed the consensus functions satisfying the
axioms (A), (B) and (C), \emph{ABC-functions}. We say that a graph $G$
is an \emph{ABC-graph} if there exists an unique ABC-function on $G$,
i.e., if the median function is the only ABC-function.  The
\emph{ABCT-} and \emph{ABCT$^-$-}functions and the \emph{ABCT-},
\emph{ABCT$^-$-}graphs are defined in a similar way. Characterizing
the ABC-graphs (or the ABCT-graphs) is an open
problem. The first problem is referred in~\cite{McMoMuNoPo} as to the
ABC-\emph{problem} and we will refer to the other as to the
ABCT-\emph{problem}.

\subsection{Graphs with connected and $G^p$-connected medians}
In this subsection we recall some definitions and results
from~\cite{BaCh_median} and~\cite{GpConMed}. The $pth$ power of a
graph $G=(V,E)$ is a graph $G^p$ having the same vertex-set $V$ as $G$
and two vertices $u$ and $v$ are adjacent in $G^p$ if and only if
$d_G(u,v) \leq p$. Let $f$ be a real-valued function defined on the
vertex set $V$ of $G$. We say that $f$ is \emph{unimodal} on $G^p$ if
any local minimum of $f$ in $G^p$ is a global minimum of $f$, i.e., if
for a vertex $u$ we have $f(u)\le f(v)$ for any $v\in V$ such that
$d_G(u,v)\le p$, then $f(u)\le f(v)$ for any $v\in V$. Analogously, we
say that the function $f$ has \emph{$G^p$-connected} (respectively,
\emph{$G^p$-isometric}) minima, if the set of minima of $f$ induce a
connected (respectively, isometric) subgraph of $G^p$.  Finally, we
say that $G$ is a graph with \emph{$G^p$-connected} (respectively,
\emph{$G^p$-isometric}) \emph{medians} if for any profile $\pi$, the
function $F_{\pi}$ is a function with $G^p$-connected (respectively,
$G^p$-isometric) minima. If $p=1$, we say that $G$ has
connected/isometric medians.

A \emph{$p$-geodesic} between two vertices $u,v$ is a finite sequence
of vertices $P=(u=w_0,w_1,\ldots ,w_n=v)$ included in a
$(u,v)$-geodesic of $G$ such that $d_G(w_i,w_{i+1})\le p$ for any
$i=0,\ldots,n-1$.
A function $f$ defined on the vertex-set $V$ of a graph $G$ is called
\emph{$p$-weakly peakless} if any pair of vertices $u,v$ of $G$ can be
connected by a $p$-geodesic $P=(u=w_0,w_1,\ldots,w_{n-1},w_n=v)$ along
which $f$ is peakless, i.e., if $0\leq i<j<k\leq p$ implies
$f(w_j)\leq \max\{ f(w_i),f(w_k)\}$ and equality holds only if
$f(w_i)=f(w_j)=f(w_k)$. For $p=1$, the $p$-weakly peakless functions
are called \emph{pseudopeakless}~\cite{BaCh_median}. Generalizing a
result for pseudopeakless functions of~\cite{BaCh_median}, it was
shown in~\cite{GpConMed} that a function $f$ is $p$-weakly peakless if
and only if it is locally-$p$-weakly peakless, i.e., for any $u,v$
such that $p+1 \leq d(u,v) \leq 2p$ there exists
$w \in I^{\circ}(u,v)$ such that
$f(w) \leq \max \left\{f(u),f(v)\right\}$, and equality holds only if
$f(u) = f(w) =f(v)$.

Generalizing a result from~\cite{BaCh_median} for $p=1$,
in~\cite{GpConMed} we characterized graphs with $G^p$-connected
medians in the following way:

\begin{theorem}[\cite{GpConMed}]\label{th-cmed-p}
  For a graph $G$ and an integer $p\ge 1$, the following conditions
  are equivalent:
  \begin{enumerate}
  \item\label{th-cmedp-0} $F_{\pi}$ is unimodal on $G^p$ for any
    profile $\pi$;
\item\label{th-cmedp-3} $F_{\pi}$ is $p$-weakly peakless for any
    profile $\pi$;
\item\label{th-cmedp-5} $G$ is a graph with $G^p$-isometric medians;
  \item\label{th-cmedp-6} $G$ is a graph with $G^p$-connected medians.
  \end{enumerate}
\end{theorem}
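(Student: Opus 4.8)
The plan is to prove Theorem~\ref{th-cmed-p} by establishing a cycle of implications among the four conditions, relying crucially on the local-to-global principle for $p$-weakly peakless functions quoted in the excerpt (the equivalence between $p$-weakly peakless and locally-$p$-weakly peakless). The natural ordering is $(\ref{th-cmedp-3}) \Rightarrow (\ref{th-cmedp-5}) \Rightarrow (\ref{th-cmedp-6}) \Rightarrow (\ref{th-cmedp-0}) \Rightarrow (\ref{th-cmedp-3})$, since the first three implications are relatively soft structural deductions and the last closes the loop by doing the real analytic work. Throughout I would fix an arbitrary profile $\pi$ and reason about the single function $f = F_{\pi}$, using the key metric fact that $F_\pi$ is a nonnegative integer combination $\sum_{x} \pi(x)\, d(\cdot,x)$ of distance functions, so that peaklessness-type properties of $F_\pi$ reduce to understanding how distance functions $d(\cdot,x)$ behave along $p$-geodesics.

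\textbf{The easy implications.} For $(\ref{th-cmedp-3}) \Rightarrow (\ref{th-cmedp-5})$, suppose $F_\pi$ is $p$-weakly peakless and let $u,v$ be two minima of $F_\pi$. Connect them by a $p$-geodesic $(u=w_0,\dots,w_n=v)$ lying inside a geodesic and along which $F_\pi$ is peakless; since $u$ and $v$ already attain the global minimum value, peaklessness forces every intermediate $w_i$ to attain that same minimum value, so the whole $p$-geodesic lies in $\Med(\pi)$ and witnesses a shortest path in $G^p$ between $u$ and $v$ inside the median set. This gives $G^p$-isometry of $\Med(\pi)$. The implication $(\ref{th-cmedp-5}) \Rightarrow (\ref{th-cmedp-6})$ is immediate because a $G^p$-isometric subgraph is in particular $G^p$-connected.

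\textbf{The two analytic implications.} For $(\ref{th-cmedp-6}) \Rightarrow (\ref{th-cmedp-0})$ I would argue contrapositively: if $F_\pi$ is not unimodal on $G^p$, there is a vertex $u$ that is a local minimum in $G^p$ (so $F_\pi(u) \le F_\pi(v)$ whenever $d_G(u,v)\le p$) but not a global minimum. A global minimizer $m$ then lies in $\Med(\pi)$ while $u$ does not, yet any $G^p$-neighbor of $u$ has value at least $F_\pi(u)$; one shows this obstructs $\Med(\pi)$ from being connected in $G^p$ by exhibiting that the sublevel structure separates $u$'s $G^p$-neighborhood from the median set, contradicting $G^p$-connectedness (here one uses that moving toward $m$ must strictly decrease $F_\pi$ somewhere, but the local-minimum condition blocks the first such step). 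The main obstacle, and the heart of the theorem, is the closing implication $(\ref{th-cmedp-0}) \Rightarrow (\ref{th-cmedp-3})$: deducing the global $p$-weakly peakless property from mere unimodality on $G^p$. The plan is to invoke the local-to-global characterization, so that it suffices to verify the \emph{local} condition, namely that for every pair $u,v$ with $p+1 \le d(u,v) \le 2p$ there is an interior vertex $w\in I^{\circ}(u,v)$ with $F_\pi(w) \le \max\{F_\pi(u),F_\pi(v)\}$ and equality only in the all-equal case.

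\textbf{Where the difficulty concentrates.} Establishing this local inequality from unimodality is the delicate step, and I expect it to require the meshed-type metric structure forced by unimodality of all $F_\pi$: one takes a midpoint-like vertex $w$ on a geodesic from $u$ to $v$ at distance roughly $p$ from each end, and analyzes $F_\pi$ using the quasi-median / interval machinery to bound $d(w,x)$ against $\max\{d(u,x),d(v,x)\}$ for each profile point $x$, summing over $x$ with weights $\pi(x)$. The subtlety is the equality case, where strictness of the peakless inequality must be traced back to strict decrease of $F_\pi$ along some $G^p$-step guaranteed by unimodality; handling this cleanly is the crux, and it is precisely where the full strength of ``$F_\pi$ unimodal on $G^p$ for \emph{every} profile $\pi$'' (rather than for a single profile) gets used, since one applies unimodality to auxiliary profiles built from $u$, $v$, and the support of $\pi$ to certify that the required interior vertex exists with the correct strict behavior.
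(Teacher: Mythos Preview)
The paper does not prove Theorem~\ref{th-cmed-p}; it is quoted from~\cite{GpConMed} as a black-box tool, so there is no proof here to compare against. I can only assess your argument on its own terms.

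Your implication $(\ref{th-cmedp-6}) \Rightarrow (\ref{th-cmedp-0})$ has a genuine gap. You argue contrapositively: if $F_\pi$ has a local minimum $u$ in $G^p$ that is not global, this ``obstructs $\Med(\pi)$ from being connected in $G^p$''. But $u \notin \Med(\pi)$, and nothing you have said prevents $\Med(\pi)$ from being a single vertex, or any other $G^p$-connected set, sitting far away from $u$. The ``sublevel structure'' may well separate $u$ from $\Med(\pi)$, but that is not a disconnection of $\Med(\pi)$ itself. Hypothesis~$(\ref{th-cmedp-6})$ constrains only the median set of $\pi$, not the landscape of $F_\pi$ away from it, so the contrapositive as written does not close. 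Any correct argument here must use that $(\ref{th-cmedp-6})$ holds for \emph{all} profiles---for instance by building an auxiliary profile whose median set is forced to contain both $u$ and a vertex near the true minimum and then exhibiting a disconnection---or else bypass this implication entirely and prove $(\ref{th-cmedp-6}) \Rightarrow (\ref{th-cmedp-3})$ directly via the local-to-global criterion. You invoke the auxiliary-profile idea for $(\ref{th-cmedp-0}) \Rightarrow (\ref{th-cmedp-3})$ but not here, and without it this link in your cycle fails.

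Your sketch of $(\ref{th-cmedp-0}) \Rightarrow (\ref{th-cmedp-3})$ correctly identifies both the reduction to the local condition and the need to exploit unimodality for auxiliary profiles, but it remains a plan rather than a proof: you do not specify which auxiliary profile to use for a given pair $u,v$ with $p+1 \le d(u,v) \le 2p$, nor how unimodality of $F_{\pi'}$ on $G^p$ produces the required interior vertex $w$ with the correct strict/non-strict behavior. That is exactly where the content of the theorem lives.
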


In~\cite{BaCh_median}, it was established that the graphs with
connected medians are meshed. Consequently, they satisfy the triangle
condition TC and their metric triangles are equilateral.

It was established in~\cite{BaCh_median} and~\cite{GpConMed} that many
important classes of graphs have connected or
$G^2$-connected medians. In particular, Helly graphs, weakly median
and median graphs, basis graphs of matroids and of even
$\Delta$-matroids have connected medians~\cite{BaCh_median}, and
bipartite Helly graphs and benzenoids have $G^2$-connected
medians~\cite{GpConMed}.  The results of our paper concern all graphs
with connected medians (including the classes mentioned above), all
modular graphs with $G^2$-connected medians (in particular bipartite
Helly graphs), and benzenoids.

\section{ABC(T)-graphs}\label{sec:charact}
In this section, we prove two results: (1) that graphs with connected
medians are ABCT-graphs and (2) that modular graphs with
$G^2$-connected medians are ABC-graphs.

\subsection{Properties of ABC-functions}
We continue with some useful properties of ABC-functions.

\begin{lemma}\label{lem-xinL}
  Let $L$ be an ABC-function on a graph $G$. Then for any profile
  $\pi$, any vertex $x \in V$, and any $x' \in L(\pi,x)$, we have
  $x' \in L(\pi,x')\subseteq L(\pi,x)$.
\end{lemma}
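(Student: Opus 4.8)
The plan is to exploit the Consistency axiom (C) together with the idempotent behavior it forces on $L$ when we concatenate a profile with one of its own outputs. Concretely, fix a profile $\pi$, a vertex $x$, and some $x' \in L(\pi,x)$. The goal splits into two containments: first that $x' \in L(\pi,x')$, and second that $L(\pi,x') \subseteq L(\pi,x)$. I would handle the membership $x' \in L(\pi,x')$ first, since it is the anchor that lets (C) be applied.

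\medskip

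\noindent\textbf{Step 1: showing $x' \in L(\pi,x')$.}
The natural tool is the Betweenness axiom (B), which gives $L(x,x') = I(x,x')$, and in particular $x' \in L(x,x')$ because $x'$ is an endpoint of its own interval. I would like to combine the hypothesis $x' \in L(\pi,x)$ with $x' \in L(x,x')$ via Consistency. Since $x'$ lies in both $L(\pi,x)$ and $L(x,x')$, the intersection is nonempty, so (C) yields $L(\pi,x,x,x') = L(\pi,x) \cap L(x,x')$, and this intersection contains $x'$. The remaining task is to pass from the profile $\pi x x x'$ (or a suitable rearrangement) to the target profile $\pi x'$; here Anonymity (A) lets me freely reorder, but I still need an argument that inserting and then effectively ``cancelling'' the extra copies of $x$ does not change the relevant output. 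I expect the clean route is to apply (C) in the form $x' \in L(\pi,x) \cap L(x,x')$ and then observe, again by (C) applied to $L(\pi,x')$ against $L(x',x)=I(x',x)$, that $x'$ persists as an output; the bookkeeping of which profiles concatenate to which is the fiddly part.

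\medskip

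\noindent\textbf{Step 2: showing $L(\pi,x') \subseteq L(\pi,x)$.}
Once $x' \in L(\pi,x')$ is established, I would take any $y \in L(\pi,x')$ and show $y \in L(\pi,x)$. The idea is that $x'$ lies on the interval $I(x',x)$, so by (B) we have $x' \in L(x',x)$, and since $x' \in L(\pi,x')$ as well, Consistency gives $L(\pi,x',x',x) = L(\pi,x') \cap L(x',x)$, a set that is nonempty (it contains $x'$) and contained in $L(\pi,x')$. The delicate point is to relate $L(\pi,x',x',x)$ back to $L(\pi,x)$ and to transport membership of an arbitrary $y$; I anticipate using the fact that adding the endpoint $x'$ of a geodesic from $x'$ to $x$ behaves compatibly with the median-like structure that (A),(B),(C) impose.

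\medskip

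\noindent\textbf{Main obstacle.}
The hard part will be Step 2, namely controlling $L$ on the concatenated profiles precisely enough to convert the intersection formula from (C) into the desired one-sided inclusion, without assuming $L = \Med$ (which is exactly what is \emph{not} yet known). Everything must be driven purely by the three axioms, so the challenge is to find the right sequence of concatenations whose pairwise intersections are provably nonempty (so that (C) applies) and whose outputs telescope to give $L(\pi,x') \subseteq L(\pi,x)$. I expect the successful argument to hinge on a careful choice of auxiliary profile built from a geodesic between $x$ and $x'$, applying (B) edge-by-edge and (C) repeatedly, with (A) used only to keep the profiles in canonical order.
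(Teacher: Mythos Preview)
Your plan has a genuine gap, and the routes you sketch to close it do not work.

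In Step~1 you correctly obtain $x' \in L(\pi,x,x,x') = L(\pi,x)\cap L(x,x')$, but then you need to reach $L(\pi,x')$. Your proposed fix is to ``apply (C) to $L(\pi,x')$ against $L(x',x)$''; however, to invoke (C) there you must already know $L(\pi,x')\cap L(x',x)\neq\varnothing$, and the only natural candidate for a common point is $x'$ itself---which is exactly what you are trying to prove. So this is circular. The geodesic/edge-by-edge idea in your ``Main obstacle'' paragraph has the same defect: every application of (C) that would say something about $L(\pi,x')$ requires a witness already inside $L(\pi,x')$, and you have none.

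The key idea you are missing is to \emph{manufacture} such a witness. Since $L$ is a consensus function, $L(\pi,x')\neq\varnothing$; pick any $x''\in L(\pi,x')$. Now work with the single auxiliary profile $\pi''=\pi,x,x',x''$ and decompose it two ways via (A). As $(\pi,x)(x',x'')$: you know $x'\in L(\pi,x)$ and $x'\in I(x',x'')=L(x',x'')$, so (C) gives $L(\pi'')=L(\pi,x)\cap L(x',x'')$ and $x'\in L(\pi'')$. As $(\pi,x')(x,x'')$: you know $x''\in L(\pi,x')$ and $x''\in I(x,x'')=L(x,x'')$, so (C) gives $L(\pi'')=L(\pi,x')\cap L(x,x'')$ and $x''\in L(\pi'')$. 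Reading the first membership through the second equality yields $x'\in L(\pi,x')$; reading the second membership through the first equality yields $x''\in L(\pi,x)$. Since $x''$ was arbitrary in $L(\pi,x')$, this also gives $L(\pi,x')\subseteq L(\pi,x)$. Both conclusions fall out of one auxiliary profile; no ``cancellation'' of repeated vertices and no induction along a geodesic is needed. This is precisely the paper's argument.
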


\begin{proof}
  Pick any $x''\in L(\pi,x')$ and consider the profile
  $\pi''= \pi,x,x',x''$. Since $x' \in L(\pi,x)$ and
  $x' \in L(x',x'') = I(x',x'')$, we have
  $x' \in L(\pi'') = L(\pi,x) \cap L(x',x'')$. Since
  $x'' \in L(\pi,x')$ and $x'' \in L(x,x'') = I(x,x'')$, we have
  $x'' \in L(\pi'') = L(\pi,x') \cap L(x,x'')$. Consequently,
  $x' \in L(\pi'') \subseteq L(\pi,x')$ and
  $x'' \in L(\pi'') \subseteq L(\pi,x)$, establishing
  $x' \in L(\pi,x')\subseteq L(\pi,x)$.
\end{proof}

Lemma~\ref{lem-xinL} implies that if $x\in L(u,v,w)$, then
$x \in L(u,v,x)$ and $L(u,v,x) \subseteq L(u,v,w)$.

\begin{lemma}\label{lem-inter-interval}
  Let $L$ be an ABC-function on a graph $G$. Then for any profile
  $\pi$ with $|\pi| \geq 2$ and any vertex $x \in V$ such that
  $x \in L(\pi,x)$, we have $\bigcap_{y \in \pi} I(x,y) = \{x\}$.
\end{lemma}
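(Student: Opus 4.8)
Since $x \in I(x,y)$ for every $y$, the inclusion $\{x\} \subseteq \bigcap_{y\in\pi} I(x,y)$ is immediate, so the whole content is the reverse inclusion. The plan is to fix an arbitrary $z \in \bigcap_{y \in \pi} I(x,y)$ and prove $z = x$ by evaluating $L$ on one and the same profile in two different ways, the two evaluations being forced to agree by Anonymity. Write $n = |\pi|$ and $\pi = (y_1,\dots,y_n)$, and consider the profile $\rho = x^{2n}\pi^2$ consisting of $2n$ copies of $x$ together with two copies of each $y_i$.

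First I would compute $L(\rho)$ \emph{from the pairing side}. Pairing each of the $2n$ occurrences of $x$ with one of the $2n$ occurrences among the $y_i$'s, axiom (B) gives that each pair $(x,y_i)$ has value $I(x,y_i)$; since $z \in I(x,y_i)$ for every $i$ (and $x\in I(x,y_i)$ as well), every partial intersection is nonempty, so iterating (C) yields $L(\rho)=\bigcap_{i=1}^n I(x,y_i)$, which in particular contains $z$.

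Second I would compute $L(\rho)$ \emph{from the hypothesis side}. Doubling the hypothesis $x\in L(\pi,x)$ via (C) gives $L(\pi^2x^2)=L(\pi,x)\ni x$. Now I pad with the remaining $2(n-1)$ copies of $x$: the value of $L$ on the profile made of at least two copies of the single vertex $x$ equals $\{x\}$ (by (B) applied to $(x,x)$ and then (C)), and since $x\in L(\pi^2x^2)$, axiom (C) gives $L(\pi^2x^2\,x^{2(n-1)})=L(\pi^2x^2)\cap\{x\}=\{x\}$; as $\pi^2x^{2n}=\rho$ up to reordering, (A) yields $L(\rho)=\{x\}$. Comparing the two evaluations forces $z\in\{x\}$, i.e.\ $z=x$, which is exactly the desired inclusion.

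The main obstacle is precisely the balance of multiplicities: a single extra copy of $x$ (as in $(\pi,x)$) does not by itself out-weigh the pull exerted by $z$ through the $n$ interval constraints, so one must amplify the weight of $x$ enough to cancel them, and this is where $|\pi|\ge 2$ is indispensable — it is what makes the padding block $x^{2(n-1)}$ nonempty and in fact of size at least $2$. Passing to the doubled profile $\rho$ rather than working with $(\pi,x)$ directly is the device that keeps every multiplicity at least $2$, so that the auxiliary identity $L(x^m)=\{x\}$ is legitimately available (single-vertex profiles being delicate under (C)) and so that both evaluations land on literally the same profile, allowing Anonymity to close the argument.
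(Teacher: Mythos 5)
Your proof is correct, and it uses the same underlying device as the paper -- evaluate an auxiliary profile in two ways via (A), (B), (C) -- but with a genuinely different auxiliary profile and decomposition. The paper takes $\pi'=\pi,x^{k},z^{k-1}$ (with $k=|\pi|$), so the candidate $z$ is built into the profile: one decomposition, $(\pi,x)$ together with $k-1$ copies of $(x,z)$, shows $x\in L(\pi')$, while the other, the $k$ pairs $(y,x)$ together with $z^{k-1}$, pins $L(\pi')$ down to $\{z\}$, whence $x=z$. Your profile $\rho=x^{2n}\pi^2$ contains no copy of $z$ at all; instead you get $L(\rho)=\bigcap_i I(x,y_i)$ from the pairing decomposition and $L(\rho)=\{x\}$ from the hypothesis-plus-padding decomposition, which yields the set equality $\bigcap_i I(x,y_i)=\{x\}$ in one stroke rather than ruling out each $z$ individually. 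Both arguments use $|\pi|\ge 2$ in the same essential way (to make the singleton-forcing block, $z^{k-1}$ resp.\ $x^{2(n-1)}$, nonempty), and indeed the lemma fails for $|\pi|=1$. Two minor remarks: your caution about single-vertex profiles is unnecessary, since $L(v)=L(v,v)=I(v,v)=\{v\}$ follows directly from (B) and (C) (the paper uses exactly this when it writes $L(z^{k})=L(z)=\{z\}$), so the doubling to $\rho$ is a stylistic choice rather than a logical necessity; and when iterating (C) over the pairs $(x,y_i)$ you should note, as you implicitly do, that every partial intersection contains $x$ and is therefore nonempty.
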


\begin{proof}
  Pick $z \in \bigcap_{y \in \pi} I(x,y)$, let $k = |\pi|$, and
  consider the profile $\pi' = \pi,x^k,z^{k-1}$. Since
  $x \in L(\pi,x)$ and $x \in L(x^{k-1},z^{k-1})= L(x,z) = I(x,z)$,
  $x \in L(\pi') = L(\pi,x) \cap I(x,z)$. Since
  $z \in \bigcap_{y \in \pi} L(y,x) = \bigcap_{y \in \pi} I(y,x)$ and
  $z \in L(z^k) = L(z) = \{z\}$,
  $L(\pi') = \bigcap_{y \in \pi} I(y,x) \cap \{z\} = \{z\}$ and
  consequently $x = z$.
\end{proof}

Lemma~\ref{lem-inter-interval} implies that if
$\{u,v,w\} \subseteq L(u,v,w)$, then $u,v,w$ form a metric triangle,
i.e., the intervals $I(u,v)$, $I(u,w)$ and $I(v,w)$ pairwise intersect
only on the common end vertices.

\subsection{Graphs with connected medians are ABCT-graphs}

In this section, we show that the graphs in which the median set of
any profile is connected satisfy the ABCT-property.  This generalizes
the result of~\cite{MuNo} for median graphs that are precisely the
bipartite graphs in which the median set of any profile is connected.
The following lemma establishes that for the ABC-functions in graphs
with connected medians, (T$^-$) is equivalent to (T):

\begin{lemma}\label{lem:TC-T-E1}
  If $G$ is a graph satisfying the triangle condition TC and $L$ is an
  ABCT$^-$-function, then $L$ also satisfies axiom (T).
\end{lemma}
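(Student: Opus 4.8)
The plan is to prove the two inclusions of axiom (T) separately: first the lower bound $\{u,v,w\}\subseteq L(u,v,w)$, which will follow quickly from (T$^-$) once we know that some element of $\{u,v,w\}$ lies in $L(u,v,w)$, and then the upper bound $L(u,v,w)\subseteq\{u,v,w\}$, which is the substantial part. Throughout I would exploit Lemmas \ref{lem-xinL} and \ref{lem-inter-interval}: starting from an arbitrary $x\in L(u,v,w)$, the remark after Lemma \ref{lem-xinL} gives $x\in L(u,v,x)$ with $L(u,v,x)\subseteq L(u,v,w)$, and applying Lemma \ref{lem-inter-interval} to the profile $(u,v)$ (and symmetrically to $(u,w)$ and $(v,w)$) yields the three interval conditions $I(x,u)\cap I(x,v)=I(x,u)\cap I(x,w)=I(x,v)\cap I(x,w)=\{x\}$.

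First I would analyze the distances $d(x,u),d(x,v),d(x,w)$, all positive when $x\notin\{u,v,w\}$ and pairwise differing by at most $1$ since $u,v,w$ are pairwise adjacent. If two of them, say $d(x,u)=d(x,v)+1$, then $v\in I(x,u)\cap I(x,v)$, contradicting the interval condition; hence the three distances equal a common value $s$. If $s\geq 2$, then TC applied with apex $x$ to the adjacent pair $v,w$ (both at distance $s>1$ from $x$) produces a common neighbour $t$ of $v,w$ with $d(x,t)=s-1$, whence $t\in I(x,v)\cap I(x,w)=\{x\}$ while $d(x,t)=s-1\geq 1$, a contradiction. This is exactly the step where TC is needed. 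Consequently, either $x\in\{u,v,w\}$, or $x$ is adjacent to all of $u,v,w$.

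For the lower bound, take any $x\in L(u,v,w)$. If $x\in\{u,v,w\}$, then (T$^-$) immediately gives $\{u,v,w\}\subseteq L(u,v,w)$. Otherwise $x$ is adjacent to $u,v,w$, so $u,v,x$ are pairwise adjacent; since $x\in L(u,v,x)$, axiom (T$^-$) yields $u\in L(u,v,x)\subseteq L(u,v,w)$, and a second application of (T$^-$) to $u,v,w$ gives $\{u,v,w\}\subseteq L(u,v,w)$.

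The main obstacle is the upper bound: ruling out $x\in L(u,v,w)$ adjacent to all of $u,v,w$ with $x\notin\{u,v,w\}$, a configuration that TC cannot reach since all relevant distances are $1$. Here I would argue by consistency on a carefully weighted profile. As $L(x)=\{x\}$ (from (B) and (C)) and $x\in L(u,v,w)$, consistency gives $L(u,v,w,x)=\{x\}$. The remark after Lemma \ref{lem-xinL} with (T$^-$) applied to the adjacent triple $u,w,x$ gives $\{u,w,x\}\subseteq L(u,w,x)$; moreover $v\notin L(u,w,x)$, for otherwise consistency with $L(v)=\{v\}$ would force $L(u,w,x,v)=\{v\}$, contradicting $L(u,v,w,x)=\{x\}$ since these two profiles coincide as multisets. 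Finally I evaluate $L$ on the profile $(u,v,w,w,x)$ in two ways: decomposing it as $(u,v,w)(w,x)$ and using $w,x\in L(u,v,w)$ together with $L(w,x)=I(w,x)=\{w,x\}$ gives $L(u,v,w,w,x)=\{w,x\}$, whereas decomposing it as $(u,w,x)(v,w)$ and using $w\in L(u,w,x)$, $v\notin L(u,w,x)$ and $L(v,w)=\{v,w\}$ gives $L(u,v,w,w,x)=\{w\}$. Since $x\neq w$, this is a contradiction, so no such $x$ exists and $L(u,v,w)=\{u,v,w\}$.
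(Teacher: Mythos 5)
Your proof is correct and follows essentially the same route as the paper: both reduce an alleged $x\in L(u,v,w)\setminus\{u,v,w\}$ to the case $x\sim u,v,w$ via Lemmas~\ref{lem-xinL} and~\ref{lem-inter-interval} together with TC, and both then derive a contradiction by evaluating $L$ on a five-element profile decomposed in two ways via (C). The only difference is cosmetic: the paper uses the profile $(u,v,w,x,u)$ split as $(u,v,x)(u,w)$ versus $(u,v,w)(u,x)$, which avoids your auxiliary step showing $v\notin L(u,w,x)$ and does not need the inclusion $\{u,v,w\}\subseteq L(u,v,w)$ beforehand, whereas your profile $(u,v,w,w,x)$ requires both.
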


\begin{proof}
  Consider three pairwise adjacent vertices $u,v,w$ and assume that
  there exists $x \in L(u,v,w)\setminus \{u,v,w\}$. Since
  $x \in L(u,v,w)$, by Lemma~\ref{lem-xinL}, we have $x \in L(u,v,x)$
  and consequently, by Lemma~\ref{lem-inter-interval},
  $I(u,x) \cap I(v,x) = \{x\}$. Since $G$ satisfies the triangle
  condition, necessarily we get $x \sim u,v$.  Since $x \in L(u,v,x)$,
  by (T$^-$), we get that $u,v,x \in L(u,v,x)$. Consequently,
  $u \in L(u,v,x) \cap L(u,w) = L(u,v,w,x,u)$ and since
  $L(u,w) = \{u,w\}$, we get that $L(u,v,w,x,u) \subseteq
  \{u,w\}$. But we have $x \in L(u,v,w) \cap L(u,x) = L(u,v,w,x,u)$,
  leading to a contradiction. Consequently,
  $L(u,v,w) \subseteq \{u,v,w\}$ and by (T$^-$), we have
  $L(u,v,w) = \{u,v,w\}$.
\end{proof}

In the following, we consider graphs satisfying the triangle condition
TC and ABCT-functions on these graphs. We generalize some ideas of
Mulder and Novick~\cite{MuNo} to establish that if $G$ has connected
medians, then $L$ is the median function on $G$.  Since median graphs
have connected medians and do not contain triangles, this provides a
new proof of the main result of~\cite{MuNo} which does not use
specific properties of median graphs other than the fact that in
median graphs, the median set of any profile is connected.

For an edge $uv$ of $G=(V,E)$, let
$W_{uv} = \{x \in V: d(u,x) < d(v,x)\}$,
$W_{vu} = \{x \in V: d(v,x) < d(u,x)\}$, and
$W_{uv}^{=} = \{x \in V: d(u,x) = d(v,x)\}$. Note that if $G$ is
bipartite, then $W_{uv}^{=} = \varnothing$. Given a profile $\pi$, we
denote by $\pi_{uv}$ (respectively, $\pi_{vu}, \pi_{uv}^=$) the
restriction of $\pi$ to $W_{uv}$ (respectively, $E_{vu}, E_{uv}^=$).
The following observation is immediate:

\begin{lemma}\label{obs-Lvx}
  For any edge $uv$ of a graph $G$ and any $x \in W_{uv}$ and
  $y \in W_{vu}$, we have $u,v \in L(v,x) = I(v,x)$ and
  $u,v \in L(u,y) = I(u,y)$.
\end{lemma}

\begin{lemma}\label{lem-Wuveq}
  Let $G$ be a graph satisfying the triangle condition TC. For any
  edge $uv$ of $G$ and any $z \in W_{uv}^{=}$, we have
  $u,v \in L(u,v,z)$.
\end{lemma}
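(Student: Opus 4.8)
The plan is to determine the possible elements of $L(u,v,z)$ precisely enough to force both $u$ and $v$ into it, separating the trivial case where $z$ is already a common neighbour of $u,v$ from the generic case. Write $m=d(u,z)=d(v,z)$. If $m=1$, then $u,v,z$ are pairwise adjacent and axiom (T) gives $L(u,v,z)=\{u,v,z\}$, so assume $m\ge 2$. Applying TC($z$) to the adjacent pair $u,v$ (note $1=d(u,v)<d(z,u)=d(z,v)=m$) produces a common neighbour $x$ of $u$ and $v$ with $d(x,z)=m-1$; hence $x\in I(u,z)\cap I(v,z)$, while $v\notin I(u,z)$ and $u\notin I(v,z)$. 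This vertex $x$ will be the lever that breaks ties.

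First I would record an auxiliary computation: for every $w\in W_{uv}$ one has $L(u,v,w)=\{u\}$, and symmetrically $L(u,v,w)=\{v\}$ for $w\in W_{vu}$. Indeed, by Lemma~\ref{obs-Lvx} we have $u\in L(v,w)=I(v,w)$, so writing $(u,v,w)=(v,w)\cdot(u)$ and using $L(u)=\{u\}$, axiom (C) gives $L(u,v,w)=I(v,w)\cap\{u\}=\{u\}$. Using this I classify the elements of $L(u,v,z)$. Fix $y\in L(u,v,z)$. By Lemma~\ref{lem-xinL}, $y\in L(u,v,y)\subseteq L(u,v,z)$, and then Lemma~\ref{lem-inter-interval} yields $I(y,u)\cap I(y,v)=\{y\}$. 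If $d(y,u)=d(y,v)\ge 2$, the triangle condition would place a common neighbour of $u,v$ in $I(y,u)\cap I(y,v)$ distinct from $y$, a contradiction; hence either $y$ is a common neighbour of $u,v$, or $d(y,u)\ne d(y,v)$. In the latter case, say $y\in W_{uv}$, the auxiliary computation gives $L(u,v,y)=\{u\}$, and since $y\in L(u,v,y)$ we get $y=u$; symmetrically $y\in W_{vu}$ forces $y=v$. Thus $L(u,v,z)\subseteq\{u,v\}\cup(N(u)\cap N(v))$, and moreover if some common neighbour $c$ lies in $L(u,v,z)$ then (T) and Lemma~\ref{lem-xinL} give $\{u,v,c\}=L(u,v,c)\subseteq L(u,v,z)$, finishing the proof in that case.

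The remaining, and main, obstacle is to exclude the unbalanced singletons $L(u,v,z)=\{u\}$ and $L(u,v,z)=\{v\}$, which axioms (A),(B),(C) alone do not rule out (this is precisely the ``tie'' phenomenon that motivates axiom (T)). Here the common neighbour $x$ and (T) are essential. Suppose $L(u,v,z)=\{u\}$ and consider the profile $(u^2,v,x,z)$. Reading it as $(u,v,z)\cdot(u,x)$ and intersecting $L(u,v,z)=\{u\}$ with $L(u,x)=\{u,x\}$, axiom (C) gives $L(u^2,v,x,z)=\{u\}$. Reading the same profile as $(u,v,x)\cdot(u,z)$ and using $L(u,v,x)=\{u,v,x\}$ from (T) together with $u,x\in I(u,z)$ and $v\notin I(u,z)$, axiom (C) gives $L(u^2,v,x,z)=\{u,x\}$. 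Since $x\ne u$, these two values are incompatible, a contradiction; the case $L(u,v,z)=\{v\}$ is symmetric (using $x\in I(v,z)$ and $u\notin I(v,z)$).

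Combining the pieces, $L(u,v,z)$ is a nonempty subset of $\{u,v\}\cup(N(u)\cap N(v))$ that equals neither $\{u\}$ nor $\{v\}$ and that, as soon as it meets $N(u)\cap N(v)$, contains $\{u,v\}$; so if it meets $N(u)\cap N(v)$ we are done, and otherwise it is a nonempty subset of $\{u,v\}$ different from both singletons, hence equal to $\{u,v\}$. In every case $u,v\in L(u,v,z)$. I expect the classification to be routine once the computation $L(u,v,w)=\{u\}$ for $w\in W_{uv}$ is in hand, and the genuine difficulty to be the singleton case, where one must invoke axiom (T) through the triangle $uvx$ supplied by the triangle condition rather than through any property of $z$ itself.
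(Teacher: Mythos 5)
Your proof is correct and rests on the same two devices as the paper's: the common neighbour $x$ of $u$ and $v$ supplied by TC($z$), and the profile $(u,v,z,u,x)$ evaluated in two ways via (C) — once against $L(u,v,z)$ and once against $L(u,v,x)=\{u,v,x\}$ obtained from (T) — together with the use of Lemmas~\ref{lem-xinL} and~\ref{lem-inter-interval} and TC to force any element of $L(u,v,z)$ outside $\{u,v\}$ to be a common neighbour of $u$ and $v$. The differences are purely organizational: the paper packages the double evaluation as an all-or-nothing claim for $\{u,v,x\}$ and then argues about an arbitrary $z'\in L(u,v,z)$, whereas you first classify $L(u,v,z)$ completely (adding the correct, though not strictly necessary, observation that $L(u,v,w)=\{u\}$ for $w\in W_{uv}$) and then exclude the singletons $\{u\}$ and $\{v\}$ by direct contradiction.
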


\begin{proof}
  By TC($z$), there exists $w \sim u,v$ such that
  $d(w,z) = d(u,z) -1 = d(v,z) -1$.

  \begin{claim}\label{claim-wLuvz}
    Either $u,v,w\in L(u,v,z)$ or $u,v,w \notin L(u,v,z)$.
  \end{claim}

  \begin{proof}
    By symmetry, it suffices to show that $u \in L(u,v,z)$ if and only
    if $w \in L(u,v,z)$.  Suppose that
    $\{u,w\} \cap L(u,v,z) \neq \varnothing$ and consider the profile
    $(u,v,z,u,w)$. Since $L(u,w) = \{u,w\}$ and
    $\{u,w\} \cap L(u,v,z) \neq \varnothing$, we have
    $L(u,v,z,u,w) = \{u,w\} \cap L(u,v,z)$.  By (T),
    $L(u,v,w) = \{u,v,w\}$ and since $L(u,z) = I(u,z)$, we have
    $u,w \in L(u,v,w) \cap L(u,z) = L(u,v,z,u,w) = \{u,w\} \cap
    L(u,v,z)$. Consequently, $u \in L(u,v,z)$ iff $w \in L(u,v,z)$.
This ends the proof of the claim.
  \end{proof}

  If $u \in L(u,v,z)$ or $v \in L(u,v,z)$, we are done by
  Claim~\ref{claim-wLuvz}. Suppose now that $u,v \notin L(u,v,z)$ and
  let $z' \in L(u,v,z)$. By Lemma~\ref{lem-xinL},
  $z' \in L(u,v,z') \subseteq L(u,v,z)$ and by
  Lemma~\ref{lem-inter-interval}, $I(u,z') \cap I(v,z') =
  \{z'\}$. Since $G$ satisfies TC($z'$), necessarily, $z' \sim
  u,v$. Therefore, by (T), $L(u,v,z') = \{u,v,z'\}$ and consequently,
  $u,v \in L(u,v,z') \subseteq L(u,v,z)$. This ends the proof of the
  lemma.
\end{proof}

Given a graph $G=(V,E)$ and a profile $\pi\in V^*$, let $\pi_{uv}$ be
the restriction of $\pi$ to the set $W_{uv}$, $\pi_{vu}$ be the
restriction of $\pi$ to $W_{vu}$, and $\pi_{uv}^{=}$ be the
restriction of $\pi$ to $W_{uv}^{=}$.

\begin{lemma}\label{lem-Lpiprime}
  Let $G=(V,E)$ be a graph satisfying the triangle condition TC and
  $\pi\in V^*$ be any profile. For any edge $uv$ of $G$, we have
  $u,v \in L(\pi')$ where $\pi' = \pi,u^{l+p},v^{k+p}$ with
  $l = |\pi_{vu}|$, $k = |\pi_{uv}|$, and $p = |\pi_{uv}^{=}|$.
\end{lemma}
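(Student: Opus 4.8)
The plan is to realize $\pi'$, up to reordering of its entries, as a concatenation of short ``building-block'' profiles on each of which $u$ and $v$ are already known to be consensus outputs, and then to fuse these blocks together using Consistency (C). Using Anonimity (A), I first split $\pi$ as $\pi = \pi_{uv}\,\pi_{vu}\,\pi_{uv}^{=}$, and observe that the appended copies $u^{l+p}$ and $v^{k+p}$ are exactly what is needed to balance $u$ against $v$: each vertex of $\pi_{uv}$ pulls toward $u$, each vertex of $\pi_{vu}$ pulls toward $v$, each vertex of $\pi_{uv}^{=}$ is neutral, and the appended copies compensate these pulls symmetrically. This is the doubling idea of Mulder and Novick, transported to the ABCT-setting (so $L$ is tacitly an ABCT-function on $G$).

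Concretely, I would form one block per occurrence in $\pi$. For each occurrence $x$ in $\pi_{uv}$ take the block $(v,x)$; by Lemma~\ref{obs-Lvx}, $u,v \in L(v,x) = I(v,x)$. For each occurrence $y$ in $\pi_{vu}$ take the block $(u,y)$; again by Lemma~\ref{obs-Lvx}, $u,v \in L(u,y) = I(u,y)$. For each occurrence $z$ in $\pi_{uv}^{=}$ take the block $(u,v,z)$; by Lemma~\ref{lem-Wuveq}, $u,v \in L(u,v,z)$. Counting the auxiliary vertices introduced, the $\pi_{uv}$-blocks contribute $k$ copies of $v$, the $\pi_{vu}$-blocks contribute $l$ copies of $u$, and the $\pi_{uv}^{=}$-blocks contribute $p$ copies of $u$ and $p$ copies of $v$. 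Hence the concatenation of all blocks is, up to a permutation of its entries, precisely $\pi_{uv}\,\pi_{vu}\,\pi_{uv}^{=}\,u^{l+p}\,v^{k+p} = \pi'$, so by (A) it has the same $L$-value as $\pi'$.

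It remains to glue the blocks. I would combine them one at a time with (C), maintaining the invariant that $u$ and $v$ lie in the $L$-value of the partial concatenation. The invariant holds initially, since each single block has $u,v$ in its $L$-value, and it is preserved: if the running profile $\rho$ and the next block $\beta$ both contain $u$ (hence also $v$) in their $L$-values, then $L(\rho)\cap L(\beta)\neq\varnothing$, so (C) gives $L(\rho\beta) = L(\rho)\cap L(\beta)\ni u,v$. Iterating over all blocks yields $u,v\in L(\pi')$.

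The genuine difficulty of the statement has already been absorbed into Lemmas~\ref{obs-Lvx} and~\ref{lem-Wuveq} (the latter using axiom (T) through the triangle condition); what remains here is essentially bookkeeping. The two points I would check with care are that the multiplicities of $u$ and $v$ produced by the blocks match $l+p$ and $k+p$ exactly, and that no intersection collapses to the empty set during the iterated application of (C); both are guaranteed by the common presence of $u$ in every block. Degenerate cases in which one or two of $\pi_{uv},\pi_{vu},\pi_{uv}^{=}$ are empty cause no trouble, as the corresponding blocks are simply absent and the argument proceeds with the remaining ones.
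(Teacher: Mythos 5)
Your proposal is correct and follows essentially the same route as the paper: decompose $\pi'$ (up to the permutation allowed by (A)) into the blocks $(v,x_i)$, $(u,y_j)$, $(u,v,z_m)$, invoke Lemma~\ref{obs-Lvx} and Lemma~\ref{lem-Wuveq} to place $u,v$ in the $L$-value of each block, and fuse the blocks by iterated applications of (C), the nonempty intersections being guaranteed by the common presence of $u$ and $v$. Your remark that $L$ is tacitly an ABCT-function (since Lemma~\ref{lem-Wuveq} relies on axiom (T)) matches the paper's standing convention for this subsection.
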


\begin{proof}
  Let $\pi_{uv} = (x_1,\ldots,x_k)$, $\pi_{vu} = (y_1,\ldots,y_{l})$,
  and $\pi_{uv}^{=} = (z_1,\ldots,z_p)$. Consider the profile
  $\pi' = \pi,u^{l+p},v^{k+p}$.  Note that
  $u,v \in \bigcap_{i=1}^k L(v,x_i) = \bigcap_{i=1}^k I(v,x_i)$ and
  that $u,v \in \bigcap_{i=1}^l L(u,y_i) = \bigcap_{i=1}^k I(u,y_i)$
  by Lemma~\ref{obs-Lvx}. Note also that
  $u,v \in \bigcap_{i=1}^p L(u,v,z_i)$ by Lemma~\ref{lem-Wuveq}.
  Consequently,
  $u,v \in L(v,x_1, \ldots, v,x_k, u,y_1, \ldots, u, y_l, u, v, z_1,
  \ldots, u, v, z_p) = L(x_1, \ldots, x_k, y_1, \ldots, y_l, z_1,
  \ldots, z_p, u^{l+p}, v^{k+p}) = L(\pi')$.
\end{proof}

\begin{lemma}\label{lem-L-wconv}
  Let $G=(V,E)$ be a graph satisfying the triangle condition TC and
  $\pi\in V^*$ be any profile.  For any edge $uv$ of $G$ the following
  holds:
  \begin{enumerate}[(1)]
  \item  If $F_{G,\pi}(u) = F_{G,\pi}(v)$, then $u \in L(\pi)$ iff
    $v \in L(\pi)$.
  \item  If $F_{G,\pi}(u) > F_{G,\pi}(v)$, then $u \notin L(\pi)$.
  \end{enumerate}
\end{lemma}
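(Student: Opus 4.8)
The plan is to reduce both statements to a single consistency argument that pits the profile $\pi$ against the ``edge profile'' $\sigma = u^{l+p}v^{k+p}$ that Lemma~\ref{lem-Lpiprime} appends to $\pi$, so that $\pi' = \pi\sigma$. The first step is to translate the analytic hypotheses on $F_{\pi}$ into the combinatorial quantities $k = |\pi_{uv}|$, $l = |\pi_{vu}|$, and $p = |\pi_{uv}^{=}|$. Since $uv$ is an edge, every $x \in W_{uv}$ satisfies $d(v,x) = d(u,x)+1$, every $y \in W_{vu}$ satisfies $d(u,y) = d(v,y)+1$, and every $z \in W_{uv}^{=}$ contributes equally to both total distances; summing gives $F_{\pi}(u) - F_{\pi}(v) = l - k$. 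Thus case (1), $F_{\pi}(u)=F_{\pi}(v)$, is exactly $l=k$, and case (2), $F_{\pi}(u)>F_{\pi}(v)$, is exactly $l>k$.

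The key auxiliary computation is to determine the value of $L$ on an edge profile $u^a v^b$. Using betweenness, $L(u,v) = I(u,v) = \{u,v\}$ (as $uv$ is an edge) and $L(w) = \{w\}$ for any single vertex $w$ (from $L(w,w)=I(w,w)=\{w\}$ together with consistency); iterating consistency and anonymity then yields $L(u^c v^c) = \{u,v\}$ and $L(u^c) = \{u\}$ for all $c \geq 1$. When $a > b$ I write $u^a v^b = (u^b v^b)\,u^{a-b}$: since $u \in L(u^b v^b) \cap L(u^{a-b}) = \{u,v\}\cap\{u\}$, consistency and anonymity give $L(u^a,v^b) = \{u\}$. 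Hence $L(\sigma) = \{u,v\}$ when $l=k$ (so $a=b$) and $L(\sigma)=\{u\}$ when $l>k$ (so $a>b$). This asymmetric case $L(u^a,v^b)=\{u\}$ for $a>b$ is the only genuinely non-routine point; everything else is bookkeeping with the ABC axioms.

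Finally I combine these computations with the fact, from Lemma~\ref{lem-Lpiprime}, that $u,v \in L(\pi') = L(\pi\sigma)$. For (1), where $L(\sigma)=\{u,v\}$: if $u \in L(\pi)$ then $u \in L(\pi)\cap L(\sigma) \neq \varnothing$, so consistency gives $L(\pi\sigma) = L(\pi)\cap\{u,v\}$; since $v \in L(\pi\sigma)$ we deduce $v \in L(\pi)$, and the symmetric argument gives the converse. For (2), where $L(\sigma)=\{u\}$: if $u \in L(\pi)$ then $u \in L(\pi)\cap L(\sigma) \neq \varnothing$, so consistency forces $L(\pi\sigma)=L(\pi)\cap\{u\}=\{u\}$, contradicting $v \in L(\pi\sigma)$; therefore $u \notin L(\pi)$. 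Note that beyond Lemma~\ref{lem-Lpiprime} (which already encapsulates the use of TC and (T)) the argument invokes only the ABC axioms.
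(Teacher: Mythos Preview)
Your proof is correct and essentially identical to the paper's: both identify $F_\pi(u)-F_\pi(v)=l-k$, invoke Lemma~\ref{lem-Lpiprime} to get $u,v\in L(\pi')$, and then split $\sigma=u^{l+p}v^{k+p}$ as $(u^{k+p}v^{k+p})\,u^{l-k}$ to apply consistency against $\pi$. The only cosmetic difference is that you package the computation of $L(\sigma)$ as a separate step, whereas the paper applies consistency to the three pieces $\pi$, $u^{k+p}v^{k+p}$, $u^{l-k}$ simultaneously.
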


\begin{proof}
  Let $l = |\pi_{vu}|$, $k = |\pi_{uv}|$, and $p = |\pi_{uv}^{=}|$.
  Assume that $F_{G,\pi}(u) \geq F_{G,\pi}(v)$. Observe that
  $F_{G,\pi}(u) - F_{G,\pi}(v) = |\pi_{vu}| - |\pi_{uv}| = l - k \geq
  0$.  As in Lemma~\ref{lem-Lpiprime}, we consider the profile
  $\pi' = \pi,u^{l+p},v^{k+p}$.  Note that
  $\pi' = \pi, u^{l-k}, u^{k+p}, v^{k+p}$.

  Suppose first that $l = k$ (i.e., $F_{G,\pi}(u) = F_{G,\pi}(v)$) and
  that $u \in L(\pi)$. Since
  $u \in L(u^{k+p},v^{k+p}) = L(u,v) = \{u,v\}$, we have
  $L(\pi') = L(\pi) \cap \{u,v\}$.
  By Lemma~\ref{lem-Lpiprime},
  $u,v \in L(\pi')$ and thus $u,v \in L(\pi)$.

  Suppose now that $l > k$ (i.e., $F_{G,\pi}(u) > F_{G,\pi}(v)$) and
  that $u \in L(\pi)$. Since $u \in L(u^{k+p},v^{k+p}) = \{u,v\}$ and
  $u \in L(u^{l-k}) = L(u) = \{u\}$, we have
  $L(\pi') = L(\pi) \cap \{u,v\} \cap \{u\} = \{u\}$, contradicting
  Lemma~\ref{lem-Lpiprime}.
\end{proof}

\begin{theorem}\label{ABCT-gcm}
  Any graph $G$ with connected medians is an ABCT-graph and
  an ABCT$^-$-graph.
\end{theorem}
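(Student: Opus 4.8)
The plan is to show that every ABCT-function $L$ on $G$ coincides with the median function $\Med$, and then to reduce the ABCT$^-$-statement to this one. Since graphs with connected medians are meshed, they satisfy the triangle condition TC; hence by Lemma~\ref{lem:TC-T-E1} any ABCT$^-$-function on $G$ is automatically an ABCT-function, so it suffices to treat the ABCT case. Observe that all of Lemmas~\ref{obs-Lvx}--\ref{lem-L-wconv} are available here, since $G$ satisfies TC.

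First I would establish $L(\pi)\subseteq\Med(\pi)$ for every profile $\pi$. Let $u\in L(\pi)$ and let $v$ be any neighbor of $u$. If $F_\pi(u)>F_\pi(v)$ held, then Lemma~\ref{lem-L-wconv}(2) would force $u\notin L(\pi)$, a contradiction; thus $F_\pi(u)\le F_\pi(v)$ for every neighbor $v$ of $u$, so $u$ is a local minimum of $F_\pi$ in $G$. Since $G$ has connected medians, Theorem~\ref{th-cmed-p} (with $p=1$) tells us that $F_\pi$ is unimodal on $G$, so this local minimum is a global minimum and $u\in\Med(\pi)$.

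Next I would prove the reverse inclusion $\Med(\pi)\subseteq L(\pi)$. As $L$ is a consensus function, $L(\pi)\neq\varnothing$, and by the previous step $L(\pi)\subseteq\Med(\pi)$, so I may fix some $u\in L(\pi)\cap\Med(\pi)$. Let $w\in\Med(\pi)$ be arbitrary. Because $G$ has connected medians, $\Med(\pi)$ induces a connected subgraph of $G$, so there is a path $u=u_0,u_1,\ldots,u_m=w$ with each $u_i\in\Med(\pi)$. Every such vertex realizes the minimum of $F_\pi$, whence $F_\pi(u_i)=F_\pi(u_{i+1})$ for all $i$; Lemma~\ref{lem-L-wconv}(1) then gives $u_i\in L(\pi)\Leftrightarrow u_{i+1}\in L(\pi)$. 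Propagating from $u_0=u\in L(\pi)$ along the path yields $w=u_m\in L(\pi)$. Combining the two inclusions gives $L(\pi)=\Med(\pi)$ for every $\pi$, i.e.\ $L=\Med$.

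The real work is already absorbed into Lemma~\ref{lem-L-wconv}, which converts the axioms (A), (B), (C), (T) into a purely local comparison of $F_\pi$ across a single edge; given it, the theorem is just two short propagation arguments (one downhill to reach a local---hence, by unimodality, global---minimum, and one along the connected median set). The main point to keep in mind is that these auxiliary lemmas, in particular Lemma~\ref{lem-Wuveq}, tacitly use axiom (T); this is precisely why the reduction through Lemma~\ref{lem:TC-T-E1} has to be performed first, so that an ABCT$^-$-function can legitimately be treated as an ABCT-function before the rest of the argument is applied.
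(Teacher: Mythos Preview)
Your proof is correct and follows essentially the same approach as the paper's: both arguments reduce everything to the edge-local comparison furnished by Lemma~\ref{lem-L-wconv}, then use unimodality of $F_\pi$ (Theorem~\ref{th-cmed-p} with $p=1$) for the inclusion $L(\pi)\subseteq\Med(\pi)$ and connectedness of $\Med(\pi)$ for the reverse inclusion, with Lemma~\ref{lem:TC-T-E1} handling the passage between (T) and (T$^-$). The only cosmetic differences are that the paper invokes Lemma~\ref{lem:TC-T-E1} at the end rather than the beginning, and phrases the second inclusion via an adjacent pair $u\in L(\pi)$, $v\in\Med(\pi)\setminus L(\pi)$ instead of your explicit path propagation.
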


\begin{proof}
  Since graphs with connected medians satisfy the triangle condition,
  Lemma~\ref{lem-L-wconv} holds for $G$. Consider an ABCT-function $L$
  on $V$ and a profile $\pi$. We first show that
  $L(\pi) \subseteq \Med_G(\pi)$. Pick $u \in L(\pi)$. By
  Theorem~\ref{th-cmed-p}(\ref{th-cmedp-0}) for $p=1$, if
  $u \notin \Med_G(\pi)$, then there exists $v \in N(u)$ such that
  $F_{G,\pi}(v) < F_{G,\pi}(u)$. Since By Lemma~\ref{lem-L-wconv}(2),
  $u \notin L(\pi)$, a contradiction.

  We now show that $\Med_G(\pi) \subseteq L(\pi)$. By
  Theorem~\ref{th-cmed-p}(\ref{th-cmedp-6}) for $p=1$, $\Med_G(\pi)$
  is connected in $G$. Consequently, if
  $L(\pi) \subsetneq \Med_G(\pi)$, there exists $u \in L(\pi)$ and
  $v \in \Med_G(\pi) \setminus L(\pi)$ such that $u \sim v$. Since
  $u, v \in \Med_G(\pi)$, $F_{G,\pi}(u) = F_{G,\pi}(v)$ and we obtain
  a contradiction with Lemma~\ref{lem-L-wconv}(1). Thus, the graphs
  with connected medians verifies the ABCT-property.  Since they
  satisfy the triangle condition, by Lemma~\ref{lem:TC-T-E1}, they
  also satisfy the ABCT$^-$-property.
\end{proof}

\begin{remark}
  Since the complete graph $K_3$ has two
  ABC-functions~\cite{McMoMuNoPo} and by Theorem~\ref{ABCT-gcm} only
  one ABCT-function (or ABCT$^-$-function), axioms (T) and (T$^-$) are
  independent from the axioms (A), (B), and (C).
\end{remark}

\subsection{Modular graphs with $G^2$-connected medians are
  ABC-graphs}\label{part:ABC-mod}

In this subsection we show that all modular graphs $G$ with
$G^2$-connected medians are ABC-graphs. This result generalizes the
results of~\cite{MuNo} and~\cite{McMoMuNoPo} stating that median
graphs and graphs satisfying the pairing property are
ABC-graphs. Indeed, in Section~\ref{sec:pairing}, we establish that
graphs with the pairing property are bipartite Helly graphs, and thus
they are modular with $G^2$-connected medians.

For a bipartite graph $G=(V,E)$ and a 2-pair $(u,v)$ of $G$, let
$X_{uv} = \{x \in V: d(u,x) < d(v,x)\}$,
$X_{vu} = \{x \in V: d(v,x) < d(u,x)\}$, and
$X_{uv}^{=} = \{x \in V: d(u,x) = d(v,x)\}$. Since $G$ is
bipartite, for any $x \in X_{uv}$, $d(v,x) = d(u,x)+2$ and
$u \in I(v,x)$. The following observation is trivial:

\begin{lemma}\label{obs-Lvx-bip}
  For any bipartite graph $G$, any 2-pair $(u,v)$ of $G$, any
  $x \in X_{uv}$ and $y \in X_{vu}$, for any ABC-function $L$, we have
  $u,v \in L(v,x) = I(v,x)$ and $u,v \in L(u,y) = I(u,y)$.
\end{lemma}

The following lemma is similar to Theorem~5 of~\cite{McMoMuNoPo}. The
difference is that here we consider a subgraph $K_{2,n}$ in an
arbitrary bipartite graph $G$.

\begin{lemma}\label{lem-K23}
  For any bipartite graph $G$, any 2-pair $(u,v)$ and any distinct
  vertices $w_1,w_2, \ldots, w_n\in I^{\circ}(u,v)$, with $n \ge 2$,
  for any ABC-function $L$, we have $u,v \in L(w_1,w_2,\ldots,w_n)$
  and $u,v \in L(u,v,w_1,w_2,\ldots,w_n)$.
\end{lemma}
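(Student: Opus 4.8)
The plan is to deduce both assertions from a single fact about the \emph{double} profile $\rho^2$, where $\rho=(w_1,\dots,w_n)$, using the idempotence of ABC-functions. The starting observation is that consistency applied to a profile with itself yields $L(\rho^2)=L(\rho)\cap L(\rho)=L(\rho)$, since $L(\rho)\neq\varnothing$. Hence it suffices to prove $u,v\in L(\rho^2)$, and the awkward parity of $n$ (which obstructs a naive pairing of the $w_i$'s into a matching) disappears.

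First I would record the local geometry. Since $G$ is bipartite and each $w_i\in I^{\circ}(u,v)$ with $d(u,v)=2$, the vertices $u,v$ lie in one colour class and $w_1,\dots,w_n$ in the other, so each $w_i$ is a common neighbour of $u$ and $v$. Consequently any two of them form a $2$-pair with $d(w_i,w_j)=2$ and $u,v\in I(w_i,w_j)$; by (B) this gives $u,v\in L(w_i,w_j)=I(w_i,w_j)$ for all $i\neq j$.

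The heart of the argument is a cyclic chain of these pair-profiles. Consider the $n$ profiles $(w_1,w_2),(w_2,w_3),\dots,(w_{n-1},w_n),(w_n,w_1)$, with indices taken cyclically ($w_{n+1}=w_1$), each of whose $L$-value contains both $u$ and $v$. I would glue them one at a time by (C): since the running image and the next pair's image share the nonempty set $\{u,v\}$, each concatenation keeps $u,v$ in the image, and after all $n$ steps we obtain $u,v\in L(w_1,w_2,w_2,w_3,\dots,w_n,w_1)$. In this concatenation every $w_i$ occurs exactly twice, so by (A) the profile equals $\rho^2$; thus $u,v\in L(\rho^2)=L(\rho)=L(w_1,\dots,w_n)$, which is the first assertion. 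This cyclic device is what replaces a perfect matching of the $w_i$'s, and it is precisely the step that makes odd $n$ work; I expect this parity issue to be the main obstacle if one instead attempts the direct even-style pairing (which forces an uncovered leftover vertex when $n$ is odd).

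For the second assertion I would invoke (B) to get $u,v\in I(u,v)=L(u,v)$ and note that $\{w_1,\dots,w_n\}\subseteq I^{\circ}(u,v)$. Since we have just shown $u,v\in L(w_1,\dots,w_n)$, the images $L(u,v)$ and $L(w_1,\dots,w_n)$ meet in the nonempty set $\{u,v\}$, so (C) yields $L(u,v,w_1,\dots,w_n)=L(u,v)\cap L(w_1,\dots,w_n)\ni u,v$, as required. The hypothesis $n\ge 2$ is essential here: for $n=1$ one computes $L(u,v,w_1)=I(u,v)\cap\{w_1\}=\{w_1\}$, so both assertions fail.
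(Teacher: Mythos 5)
Your proof is correct and follows essentially the same route as the paper's: the cyclic chain of pair-profiles $(w_1,w_2),(w_2,w_3),\dots,(w_n,w_1)$ glued by (C), the identification of the concatenation with the double profile $\rho^2$ and the idempotence $L(\rho^2)=L(\rho)$, and a final application of (C) with $L(u,v)=I(u,v)$ for the second assertion. Your additional remarks on the parity issue and the failure for $n=1$ are accurate but not needed.
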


\begin{proof}
  Let $\pi = (w_1,w_2,\ldots,w_n)$.  For any $1 \leq i \leq n$, we
  have $u,v \in L(w_i,w_{i+1}) = I(w_i,w_{i+1})$ (with the convention
  that $w_{n+1}=w_1$). Consequently,
  $u,v \in L(w_1,w_2)\cap L(w_2,w_3) \cap \cdots \cap L(w_{n-1},w_n)
  \cap L(w_n,w_1) = L(w_1,w_2,w_2,w_3,\ldots,w_{n-1},w_n,w_n,w_1) =
  L(\pi\pi) = L(\pi)$. Since $u,v \in L(u,v) = I(u,v)$,
  $u,v \in L(u,v)\cap L(\pi) = L(u,v,w_1,w_2,\ldots,w_n)$.
\end{proof}

\begin{lemma}\label{lem-3neighbors}
  For a modular graph $G$ and a 2-pair $(u,v)$, if there exists a
  profile $\pi$ such that $F_{\pi}$ is not pseudopeakless on $(u,v)$,
  then $I^{\circ}(u,v)$ contains at least three vertices
  $w_1,w_2,w_3$.
\end{lemma}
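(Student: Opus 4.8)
The plan is to argue by contraposition: assuming that $I^{\circ}(u,v)$ contains at most two vertices, I will exhibit a vertex $w\in I^{\circ}(u,v)$ witnessing that $F_{\pi}$ is pseudopeakless on the $2$-pair $(u,v)$, i.e. with $F_{\pi}(w)\le \max\{F_{\pi}(u),F_{\pi}(v)\}$ and equality forcing $F_{\pi}(u)=F_{\pi}(w)=F_{\pi}(v)$. Since $d(u,v)=2$, every vertex of $I^{\circ}(u,v)$ is a common neighbour of $u$ and $v$, and there is at least one; so the only cases to rule out are $|I^{\circ}(u,v)|\in\{1,2\}$.

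First I would record the relevant distances, using that $G$ is bipartite (modular graphs are bipartite). Partitioning the support of $\pi$ into $X_{uv}$, $X_{vu}$, and $X_{uv}^{=}$, bipartiteness pins down $d(w,x)$ for any common neighbour $w$ of $u,v$: if $x\in X_{uv}$ then $d(w,x)=d(u,x)+1$, if $x\in X_{vu}$ then $d(w,x)=d(v,x)+1$, and if $x\in X_{uv}^{=}$ then $d(w,x)=d(u,x)+\varepsilon_w(x)$ with $\varepsilon_w(x)\in\{-1,+1\}$. Summing against the weights $\pi(x)$ yields the key identity
\[
  F_{\pi}(w)=\tfrac12\bigl(F_{\pi}(u)+F_{\pi}(v)\bigr)+\sum_{x\in X_{uv}^{=}}\pi(x)\,\varepsilon_w(x),
\]
so controlling $F_{\pi}(w)$ reduces to controlling the signs $\varepsilon_w(x)$ on $X_{uv}^{=}$.

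The decisive use of modularity comes next: for each $x\in X_{uv}^{=}$ the triplet $u,v,x$ has a median $m\in I(u,v)\cap I(u,x)\cap I(v,x)$. From $d(u,x)=d(v,x)$ one gets $d(u,m)=d(v,m)$, and since $m\in I(u,v)$ with $d(u,v)=2$ this forces $d(u,m)=d(v,m)=1$; hence $m$ is a common neighbour of $u,v$ with $d(m,x)=d(u,x)-1$, i.e. $\varepsilon_m(x)=-1$. Thus every $x\in X_{uv}^{=}$ has some common neighbour for which its sign is negative. If $I^{\circ}(u,v)=\{w\}$, then $\varepsilon_w(x)=-1$ for all such $x$, so the identity gives $F_{\pi}(w)\le\tfrac12(F_{\pi}(u)+F_{\pi}(v))\le\max\{F_{\pi}(u),F_{\pi}(v)\}$. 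If $I^{\circ}(u,v)=\{w_1,w_2\}$, then for each $x$ at least one of $\varepsilon_{w_1}(x),\varepsilon_{w_2}(x)$ equals $-1$, so $\varepsilon_{w_1}(x)+\varepsilon_{w_2}(x)\le0$; adding the two identities gives $F_{\pi}(w_1)+F_{\pi}(w_2)\le F_{\pi}(u)+F_{\pi}(v)$, whence $\min\{F_{\pi}(w_1),F_{\pi}(w_2)\}\le\max\{F_{\pi}(u),F_{\pi}(v)\}$.

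In both cases the chosen $w$ satisfies the required inequality, contradicting the failure of pseudopeaklessness. The step I expect to require the most care is the equality analysis: one must verify that whenever $F_{\pi}(w)=\max\{F_{\pi}(u),F_{\pi}(v)\}$ the inequalities above are all tight, which forces $F_{\pi}(u)=F_{\pi}(v)$ (and $F_{\pi}(w_1)=F_{\pi}(w_2)$ in the two-neighbour case) and hence $F_{\pi}(u)=F_{\pi}(w)=F_{\pi}(v)$ --- precisely the degenerate equality permitted by the definition, so $(u,v)$ would after all be a pseudopeakless pair, completing the contradiction.
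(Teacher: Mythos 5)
Your proof is correct and follows essentially the same route as the paper: both arguments use modularity to produce, for each profile vertex $x$ equidistant from $u$ and $v$, a median of $u,v,x$ that is a common neighbour lying closer to $x$, and both culminate in the inequality $F_{\pi}(w_1)+F_{\pi}(w_2)\le F_{\pi}(u)+F_{\pi}(v)$ (the paper treats the one- and two-neighbour cases uniformly by allowing $w_1=w_2$, while you separate them and make the sign bookkeeping on $X_{uv}^{=}$ explicit). Your added care with the equality case is a welcome elaboration of a point the paper leaves implicit.
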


\begin{proof} Suppose that $I^{\circ}(u,v)$ contains at most two
  vertices $w_1,w_2$ (where $w_1$ and $w_2$ may coincide). Since $G$
  is modular, for any vertex $x$ of $\pi$ either $w_1$ or $w_2$
  belongs to $I(x,u)\cap I(x,v)\cap I(u,v)$, yielding
  $d(x,w_1)+d(x,w_2)\le d(x,u)+d(x,v)$.  Therefore,
  $F_{\pi}(w_1)+F_{\pi}(w_2)\le F_{\pi}(u)+F_{\pi}(v)$, contrary to
  the assumption that $F_{\pi}$ is not pseudopeakless on $(u,v)$.
\end{proof}

Therefore, we investigate now the properties of an ABC-function $L$ on
2-pairs $(u,v)$ of $G$ for which $I^{\circ}(u,v)$ contains at least
three vertices.

\begin{lemma}\label{lem-K33+}
  Let $G$ be a bipartite graph, $(u,v)$ be a 2-pair with
  $w_1,w_2,w_3\in I^{\circ}(u,v)$, and $z\in X_{uv}^{=}$.  Then
  $u,v \in L(u,v,w_1,w_2,w_3,z)$.
\end{lemma}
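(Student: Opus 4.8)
The plan is to show $u,v \in L(u,v,w_1,w_2,w_3,z)$ for a 2-pair $(u,v)$ with three common neighbors $w_1,w_2,w_3 \in I^{\circ}(u,v)$ and a vertex $z \in X_{uv}^{=}$. Looking at what is available, Lemma~\ref{lem-K23} already gives $u,v \in L(u,v,w_1,w_2,w_3)$, so the goal is essentially to ``absorb'' the extra vertex $z$ without destroying this membership. The natural tool is Consistency (C): if I can exhibit a profile $\rho$ involving $z$ with $u,v \in L(\rho)$ and such that the concatenation of $(u,v,w_1,w_2,w_3)$-type profiles with $\rho$ recombines into $(u,v,w_1,w_2,w_3,z)$ (up to Anonimity and possibly duplicating vertices), then $u,v \in L(u,v,w_1,w_2,w_3,z)$ follows from $u,v$ lying in the intersection.

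First I would try to realize $z \in X_{uv}^{=}$ through the $w_i$'s. Since $d(u,z)=d(v,z)$ and the graph is bipartite and modular, I expect that $z$ can be ``split'' relative to the three vertices $w_1,w_2,w_3 \in I^{\circ}(u,v)$; concretely, I would look for the property that $z$ lies in $I(w_i,w_j)$ for some pair, or more usefully that $u,v \in L(w_i,z)$-type intervals after suitable combination. The key structural fact I would exploit is that each $w_i$ is adjacent to both $u$ and $v$, so $I(u,w_i)=\{u,w_i\}$ and $I(v,w_i)=\{v,w_i\}$ are edges, and $z$ being equidistant from $u$ and $v$ should force, via the quadrangle/triangle machinery of modular graphs, a relationship like $d(w_i,z)=d(u,z)-1$ for each $i$ (so every $w_i$ is one step closer to $z$ than $u$ and $v$ are). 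This would let me write each $w_i \in I(u,z) \cap I(v,z)$, i.e. $w_i \in X_{uv}^{=}$ sitting between $\{u,v\}$ and $z$.

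With that in hand, the combinatorial assembly is the heart of the argument. I would consider the profile $\rho = (w_1,z,w_2,z,w_3,z)$ or a balanced variant, aiming to show $u,v \in L(\rho)$: for each $i$, I expect $u,v \in L(w_i,z)=I(w_i,z)$ provided $u,v \in I(w_i,z)$, which holds exactly when $w_i,u,v,z$ sit on a common shortest path configuration. If instead $u,v \notin I(w_i,z)$, I would fall back on combining $L(u,z)$ or $L(v,z)$ intervals (both of which contain $u,v$ when $z \in X_{uv}^{=}$ together with the fact that $u,v$ are mutually at distance~$2$ through each $w_i$). Then I would concatenate this $z$-carrying profile with copies of $(u,v,w_1,w_2,w_3)$ from Lemma~\ref{lem-K23}; because Lemma~\ref{lem-K23} guarantees $u,v$ are in the median-type set of the $w$-part and the construction guarantees $u,v$ in the $z$-part, Consistency yields $u,v$ in the intersection, and Anonimity lets me rearrange the multiset to match $(u,v,w_1,w_2,w_3,z)$ after duplicating each vertex the same number of times.

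The main obstacle I anticipate is the metric placement of $z$: establishing that each $w_i$ satisfies $d(w_i,z) = d(u,z)-1$ (equivalently $w_i \in I(u,z)\cap I(v,z)$) from only $z \in X_{uv}^{=}$ and modularity. This is where the quadrangle condition QC should enter, applied to the configuration $u,v$ at distance~$2$ with common neighbor $w_i$ and the equidistant vertex $z$: QC$(z)$ on the pair $u,v$ with their common neighbor should produce the needed vertex closer to $z$, and the uniqueness/structure of modular graphs should identify it with (or route it through) the $w_i$. If this metric claim fails for some $w_i$, the profile $\rho$ must be chosen more carefully, possibly weighting $u$ and $v$ to force symmetry; but I expect the clean statement $w_i \in I(u,z)\cap I(v,z)$ to hold, after which the Consistency-and-Anonimity bookkeeping is routine.
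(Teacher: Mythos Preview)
Your proposal has a genuine gap at exactly the point you flag as ``the main obstacle'': the metric placement of $z$ relative to the $w_i$'s does not behave the way you hope, and the fallback you sketch does not work either.

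First, the lemma only assumes $G$ is \emph{bipartite}, not modular; the quadrangle condition QC is not available. Second, even in modular graphs your metric claim fails. Since $w_i\sim u$ and $d(u,z)=k$, bipartiteness gives $d(w_i,z)\in\{k-1,k+1\}$, and both values actually occur; the paper treats them separately. When $d(w_i,z)=k+1$ one has $u,v\in I(w_i,z)$ and a direct decomposition $L(w_1,z)\cap L(u,v)\cap L(w_2,w_3)$ works. But in the main case $d(w_i,z)=k-1$ for all $i$, your intended relation $u,v\in I(w_i,z)$ is \emph{false}: $d(w_i,u)+d(u,z)=1+k>k-1$. Your fallback ``$u,v\in L(u,z)=I(u,z)$'' is also false, since $d(u,v)+d(v,z)=2+k>k$ means $v\notin I(u,z)$. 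In short, when all $w_i$ lie between $\{u,v\}$ and $z$, there is no pair or subprofile of $(u,v,w_1,w_2,w_3,z)$ whose $L$-value contains both $u$ and $v$ and which carries the vertex $z$, so the direct Consistency assembly you envisage cannot be completed.

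The paper's route is indirect and avoids this obstruction entirely. It first proves an all-or-nothing claim: either all five of $u,v,w_1,w_2,w_3$ lie in $L(u,v,w_1,w_2,w_3,z)$ or none do. It then takes any $z'\in L(u,v,w_1,w_2,w_3,z)$, uses Lemma~\ref{lem-xinL} to replace $z$ by $z'$, and shows by case analysis on the position of $z'$ (either $d(u,z')\ne d(v,z')$, or $z'$ equidistant with some $d(w_i,z')=k'+1$, or $z'$ equidistant with all $d(w_i,z')=k'-1$) that at least one of $u,v,w_1,w_2,w_3$ lies in $L(u,v,w_1,w_2,w_3,z')\subseteq L(u,v,w_1,w_2,w_3,z)$, whence all five do. The key missing idea in your plan is this replacement-and-case-analysis step via Lemma~\ref{lem-xinL}; no amount of profile concatenation bypasses it.
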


\begin{proof}
  Let $k = d(u,z) = d(v,z)$. We first consider the case where $u,v$
  belong to one of the intervals $I(w_1,z), I(w_2,z), I(w_3,z)$. Note
  that this covers the case when $z \in \{w_1,w_2,w_3\}$.

  \begin{claim}\label{claim-K33-uv}
    If $\max\{ d(z,w_1),d(z,w_2),d(z,w_3)\}=k+1$, then
    $u,v \in L(u,v,w_1,w_2,w_3,z)$.
  \end{claim}

  \begin{proof}
    Let $d(z,w_1)=k+1$.  Since $u,v \in L(w_1,z) = I(w_1,z)$,
    $u,v \in L(u,v) = I(u,v)$, and $u,v \in L(w_2,w_3) = I(w_2,w_3)$,
    necessarily
    $u,v \in L(w_1,z) \cap L(u,v) \cap L(w_2,w_3) =
    L(u,v,w_1,w_2,w_3,z)$. This ends the proof of the claim.
  \end{proof}

  By Claim~\ref{claim-K33-uv} and since $G$ is bipartite, we can
  assume that $d(w_1,z) = d(w_2,z) = d(w_3,z) = k-1$.

  \begin{claim}\label{claim-K33-toutourien}
    Either $u,v,w_1,w_2,w_3 \in L(u,v,w_1,w_2,w_3,z)$ or
    $u, v, w_1, w_2, w_3 \notin
    L(u,v,w_1,w_2,w_3,z)$.
  \end{claim}

  \begin{proof}
    By symmetry, it suffices to show that $u \in L(u,v,w_1,w_2,w_3,z)$
    iff $w_1 \in L(u,v,w_1,w_2,w_3,z)$.  Suppose that
    $L(u,v,w_1,w_2,w_3,z) \cap \{u,w_1\} \neq \varnothing$ and
    consider the profile $\pi' = (u,v,w_1,w_2,w_3,z,u,w_1)$. Note that
    $L(u,w_1) = I(u,w_1) = \{u,w_1\}$ and thus
    $L(\pi') = L(u,v,w_1,w_2,w_3,z) \cap L(u,w_1) =
    L(u,v,w_1,w_2,w_3,z) \cap \{u,w_1\}$. Note also that
    $u,w_1 \in L(u,z) = I(u,z)$, $u,w_1 \in L(u,v) = I(u,v)$,
    $u,w_1 \in L(w_1,w_2) = I(w_1,w_2)$, and
    $u,w_1 \in L(w_1,w_3) = I(w_1,w_3)$. Consequently,
    $u,w_1 \in L(u,z) \cap L(u,v) \cap L(w_1,w_2) \cap L(w_1,w_3) =
    L(u,z,u,v,w_1,w_2,w_1,w_3) = L(\pi') = L(u,v,w_1,w_2,w_3,z) \cap
    \{u,w_1\}$. Therefore, $u,w_1 \in L(u,v,w_1,w_2,w_3,z)$ and we are
    done. This ends the proof of the claim.
  \end{proof}

  Now, we prove the assertion of the lemma. Let
  $z' \in L(u,v,w_1,w_2,w_3,z)$. By Lemma~\ref{lem-xinL},
  $z' \in L(u,v,w_1,w_2,w_3,z')$ and
  $L(u,v,w_1,w_2,w_3,z') \subseteq L(u,v,w_1,w_2,w_3,z)$.  Suppose
  first that $d(u,z') \neq d(v,z')$ and assume without loss of
  generality that $d(u,z') < d(v,z')$. In this case,
  $d(v,z') = d(w_1,z')+1 = d(w_2,z')+1 = d(w_2,z')+1 =d(u,z')+2$.
  Then $u \in L(z',v) = I(v,z')$, $u \in L(w_2,w_3) = I(w_2,w_3)$, and
  $u \in L(u,w_1) = \{u,w_1\}$. Consequently,
  $L(u,v,w_1,w_2,w_3,z') = I(v,z') \cap I(w_2,w_3) \cap \{u,w_1\} =
  \{u\}$. Therefore
  $u \in L(u,v,w_1,w_2,w_3,z') \subseteq L(u,v,w_1,w_2,w_3,z)$ and we
  are done by Claim~\ref{claim-K33-toutourien}.

  Suppose now that $d(u,z') = d(v,z')$ and let $k' = d(u,z')$.  If
  $\max\{ d(z',w_1),d(z',w_2),d(z',w_3)=k'+1$, say $d(w_1,z') = k'+1$,
  by Claim~\ref{claim-K33-uv} applied to the profile
  $(u,v,w_1,w_2,w_3,z')$, we have
  $u,v \in L(u,v,w_1,w_2,w_3,z') \subseteq L(u,v,w_1,w_2,w_3,z)$ and
  we are done. Thus, we can assume that
  $d(w_1,z') = d(w_2,z') = d(w_3,z') = k'-1$. Consider the profile
  $\pi'' = (u,v,w_1,w_2,w_3,z',w_1,z')$. Since
  $z' \in L(u,v,w_1,w_2,w_3,z')$ and $z' \in L(w_1,z') = I(w_1,z')$,
  we have $L(\pi'') = L(u,v,w_1,w_2,w_3,z') \cap L(w_1,z')$. Observe
  that $w_1 \in L(u,z') = I(u,z')$, $w_1 \in L(v,z') = I(v,z')$,
  $w_1 \in L(w_1,w_2) = I(w_1,w_2)$, and
  $w_1 \in L(w_1,w_3) = I(w_1,w_3)$. Consequently,
  $w_1 \in L(u,z')\cap L(v,z')\cap L(w_1,w_2) \cap L(w_1,w_3) =
  L(u,z',v,z',w_1,w_2,w_1,w_3) = L(\pi'') = L(u,v,w_1,w_2,w_3,z') \cap
  L(w_1,z')$. Therefore,
  $w_1 \in L(u,v,w_1,w_2,w_3,z') \subseteq L(u,v,w_1,w_2,w_3,z)$ and
  we are done by Claim~\ref{claim-K33-toutourien}. This ends the proof
  of the lemma.
\end{proof}

For any profile $\pi$ on a bipartite graph $G$ and any 2-pair $(u,v)$,
let $\pi_{uv}$ and $\pi_{vu}$ be the restrictions of $\pi$ on the sets
$X_{uv}$ and $X_{vu}$, respectively, and $\pi_{uv}^{=}$ be the
restriction of $\pi$ on $X_{uv}^{=}$.

\begin{lemma}\label{lem-bip-Lpiprime}
  Let $G$ be a bipartite graph, $(u,v)$ be a 2-pair with
  $w_1,w_2,w_3\in I^{\circ}(u,v)$.  Then for any profile $\pi$, we
  have $u,v \in L(\pi')$ where
  $\pi' = \pi, u^{\ell+p}, v^{k+p}, w_1^p, w_2^p, w_3^p$ with
  $\ell = |\pi_{vu}|$, $k = |\pi_{uv}|$, and $p=|\pi_{uv}^{=}|$.
\end{lemma}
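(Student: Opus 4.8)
The plan is to mimic the proof of Lemma~\ref{lem-Lpiprime} verbatim in structure, replacing the edge $uv$ by the 2-pair $(u,v)$ and substituting the bipartite counterparts of the auxiliary lemmas. First I would write $\pi_{uv} = (x_1,\ldots,x_k)$, $\pi_{vu} = (y_1,\ldots,y_\ell)$, and $\pi_{uv}^{=} = (z_1,\ldots,z_p)$, so that $\pi$ is (up to reordering, which is harmless by Anonymity) the concatenation of these three subprofiles, since $X_{uv}$, $X_{vu}$, $X_{uv}^{=}$ partition $V$.

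The key point is that $u$ and $v$ lie in the median set of each of a family of elementary profiles. For each $x_i \in \pi_{uv}$, Lemma~\ref{obs-Lvx-bip} gives $u,v \in L(v,x_i) = I(v,x_i)$; for each $y_j \in \pi_{vu}$, the same lemma gives $u,v \in L(u,y_j) = I(u,y_j)$; and for each $z_m \in \pi_{uv}^{=}$, Lemma~\ref{lem-K33+} (invoking the three vertices $w_1,w_2,w_3 \in I^{\circ}(u,v)$) gives $u,v \in L(u,v,w_1,w_2,w_3,z_m)$. This last step is precisely where the three interior vertices are needed: in the bipartite 2-pair setting the naive analogue of Lemma~\ref{lem-Wuveq} is not available, and Lemma~\ref{lem-K33+} supplies the $K_{3,3}$-type gadget that forces $u,v$ into the consensus for an equidistant vertex.

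The concluding step is to concatenate all these elementary profiles and apply Consistency (C) iteratively. Since $\{u,v\}$ is contained in the median set of every factor, each partial intersection is nonempty, so (C) applies at each stage and yields $u,v \in L(\cdot)$ of the full concatenation. It then remains to verify that this concatenation is exactly $\pi'$: the $x_i$'s, $y_j$'s and $z_m$'s together reconstitute $\pi$; the $k$ profiles $(v,x_i)$ together with the $p$ profiles involving $z_m$ contribute $v^{k+p}$; symmetrically the $\ell$ profiles $(u,y_j)$ together with the $p$ profiles involving $z_m$ contribute $u^{\ell+p}$; and the $p$ profiles $(u,v,w_1,w_2,w_3,z_m)$ contribute $w_1^p, w_2^p, w_3^p$. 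This matches $\pi' = \pi, u^{\ell+p}, v^{k+p}, w_1^p, w_2^p, w_3^p$ exactly.

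I do not anticipate a substantive obstacle here. Once Lemma~\ref{lem-K33+} is in hand the argument is a bookkeeping exercise in matching exponents, and the only point deserving a line of care is the legitimacy of the repeated use of (C), which is guaranteed because $\{u,v\}$ sits inside every factor and hence inside every intermediate intersection.
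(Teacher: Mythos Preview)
Your proposal is correct and follows essentially the same approach as the paper's proof: the same decomposition of $\pi$ into $\pi_{uv},\pi_{vu},\pi_{uv}^{=}$, the same invocations of Lemma~\ref{obs-Lvx-bip} for the $x_i$ and $y_j$ and of Lemma~\ref{lem-K33+} for the $z_m$, and the same application of (C) to assemble $\pi'$. Your additional remark about why three interior vertices are needed (as a substitute for Lemma~\ref{lem-Wuveq}) and your explicit check that (C) applies at each stage are welcome clarifications, but the argument is otherwise identical.
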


\begin{proof}
  Set $\pi_{uv} = (x_1,\ldots,x_k)$, $\pi_{vu} = (y_1,\ldots,y_{\ell})$,
  and $\pi_{uv}^{=} = (z_1,\ldots,z_p)$. Consider the profile
  $\pi' = \pi, u^{l+p}, v^{k+p}, w_1^p, w_2^p, w_3^p$.  By
  Lemma~\ref{obs-Lvx-bip},
  $u,v \in \bigcap_{i=1}^k L(v,x_i) = \bigcap_{i=1}^k I(v,x_i)$ and
  $u,v \in \bigcap_{i=1}^{\ell} L(u,y_i) = \bigcap_{i=1}^{\ell}
  I(u,y_i)$. Note also that
  $u,v \in \bigcap_{i=1}^p L(u,v,w_1,w_2,w_3,z_i)$ by
  Lemma~\ref{lem-K33+}.  Consequently,
  $u,v \in L(v^k,x_1, \ldots,x_k, u^{\ell},y_1, \ldots, y_{\ell}, u^p,
  v^p, w_1^p, w_2^p, w_3^p, z_1, \ldots, z_p) = L(x_1, \ldots, x_k,
  y_1, \ldots, y_{\ell}, z_1, \ldots, z_p, u^{{\ell}+p}, v^{k+p},
  w_1^p, w_2^p, w_3^p) = L(\pi')$.
\end{proof}

\begin{lemma}\label{lem-bip-L-wconv}
  Let $G$ be a bipartite graph and $(u,v)$ be a 2-pair with
  $w_1,w_2,w_3\in I^{\circ}(u,v)$.  For any profile $\pi$ on $G$ the
  following holds:
\begin{enumerate}[(1)]
  \item If $F_{G,\pi}(u) = F_{G,\pi}(v)$, then $u \in L(\pi)$ iff
    $v \in L(\pi)$.
  \item If $F_{G,\pi}(u) > F_{G,\pi}(v)$, then $u \notin L(\pi)$.
  \end{enumerate}
\end{lemma}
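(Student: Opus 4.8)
The plan is to follow the template of the proof of Lemma~\ref{lem-L-wconv}, replacing the edge $uv$ by the $2$-pair $(u,v)$ and using Lemma~\ref{lem-bip-Lpiprime} in place of Lemma~\ref{lem-Lpiprime}. The first step is the distance bookkeeping. Since $G$ is bipartite and $d(u,v)=2$, for $x\in X_{uv}$ we have $d(v,x)=d(u,x)+2$, for $x\in X_{vu}$ we have $d(u,x)=d(v,x)+2$, and for $x\in X_{uv}^{=}$ the two distances agree. Writing $k=|\pi_{uv}|$, $\ell=|\pi_{vu}|$ and $p=|\pi_{uv}^{=}|$, this gives
\[
  F_{G,\pi}(u)-F_{G,\pi}(v)=2(\ell-k),
\]
so that the hypothesis $F_{G,\pi}(u)=F_{G,\pi}(v)$ of (1) is equivalent to $\ell=k$, and the hypothesis $F_{G,\pi}(u)>F_{G,\pi}(v)$ of (2) is equivalent to $\ell>k$.

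Next, set $\rho=u^{k+p},v^{k+p},w_1^p,w_2^p,w_3^p$, so that the profile $\pi'=\pi,u^{\ell+p},v^{k+p},w_1^p,w_2^p,w_3^p$ of Lemma~\ref{lem-bip-Lpiprime} equals $\pi,u^{\ell-k},\rho$ up to reordering (by anonymity). The crucial preliminary observation is that $u,v\in L(\rho)$. Indeed, by anonymity $\rho$ is the concatenation of $(u,v)^k$ and $(u,v,w_1,w_2,w_3)^p$; since $u,v\in L(u,v)=I(u,v)$ and, by Lemma~\ref{lem-K23}, $u,v\in L(u,v,w_1,w_2,w_3)$, applying consistency repeatedly (first to collapse each power, then to join the two blocks) yields $u,v\in L(\rho)$. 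I would also record, directly from Lemma~\ref{lem-bip-Lpiprime}, that $u,v\in L(\pi')$.

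Now I would split into the two cases. For (1), assume $\ell=k$, so $\pi'=\pi,\rho$, and assume $u\in L(\pi)$. Then $u\in L(\pi)\cap L(\rho)$, so by consistency $L(\pi')=L(\pi)\cap L(\rho)$; since $v\in L(\pi')$, we obtain $v\in L(\pi)$. The reverse implication holds by the symmetry $u\leftrightarrow v$, which preserves $\ell=k$ and fixes $\rho$, giving the stated equivalence. For (2), assume $\ell>k$ and, for contradiction, $u\in L(\pi)$. As before, $u\in L(\pi)\cap L(\rho)$ gives $L(\pi,\rho)=L(\pi)\cap L(\rho)\ni u$; intersecting once more with $L(u^{\ell-k})=L(u)=\{u\}$ (here $\ell-k\ge 1$) and using consistency yields $L(\pi')=L(\pi,\rho,u^{\ell-k})=\{u\}$. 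This contradicts $v\in L(\pi')$, so $u\notin L(\pi)$.

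The only genuinely new ingredient, and the step I expect to require the most care, is the observation $u,v\in L(\rho)$. In the edge case the analogous profile $u^{k+p},v^{k+p}$ has median set exactly $\{u,v\}=I(u,v)$, which is what made the argument of Lemma~\ref{lem-L-wconv} work; here $I(u,v)$ also contains $w_1,w_2,w_3$, so $L(\rho)$ need \emph{not} equal $\{u,v\}$, and one cannot argue through $\Med$ (not yet known to coincide with $L$). The point is that the argument does not need $L(\rho)=\{u,v\}$: the membership $u,v\in L(\rho)$ together with $u,v\in L(\pi')$ from Lemma~\ref{lem-bip-Lpiprime} is exactly what drives both cases. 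Minor degenerate situations (some of $k,\ell,p$ equal to $0$, so that a sub-profile is empty) are handled by simply omitting the corresponding block; for instance, if $\rho$ is empty then case (1) is immediate from $u,v\in L(\pi')=L(\pi)$.
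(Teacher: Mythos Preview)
Your proof is correct and follows essentially the same approach as the paper: both use Lemma~\ref{lem-bip-Lpiprime} to obtain $u,v\in L(\pi')$, decompose the auxiliary part of $\pi'$ via $L(u,v)=I(u,v)$ and Lemma~\ref{lem-K23} to get $u$ (and $v$) in the intersection, and then apply consistency to conclude. Your packaging into a single block $\rho$ is a cosmetic variant of the paper's two-piece decomposition $L(u^k,v^k)\cap L(u^p,v^p,w_1^p,w_2^p,w_3^p)$, and your factor $2$ in $F_{G,\pi}(u)-F_{G,\pi}(v)=2(\ell-k)$ is in fact the correct value (the paper drops the $2$, harmlessly since only the sign matters).
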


\begin{proof}
  Let $\ell = |\pi_{vu}|$, $k = |\pi_{uv}|$, and $p=|\pi_{uv}^{=}|$.
  Suppose without loss of generality that
  $F_{G,\pi}(u) \geq F_{G,\pi}(v)$. Then
  $F_{G,\pi}(u) - F_{G,\pi}(v) = |\pi_{vu}| - |\pi_{uv}| = \ell - k
  \geq 0$.  As in Lemma~\ref{lem-bip-Lpiprime}, we consider the
  profile $\pi' = \pi,u^{\ell+p},v^{k+p},w_1^p,w_2^p,w_3^p$.  Note
  that
  $\pi' = \pi, u^{\ell-k}, u^{k}, v^{k},u^p,v^p,w_1^p,w_2^p,w_3^p$.

  Suppose first that $F_{G,\pi}(u) = F_{G,\pi}(v)$ (i.e., $\ell = k$)
  and that $u \in L(\pi)$. Since
  $u \in L(u^{k},v^{k}) = L(u,v) = I(u,v)$ and
  $u \in L(u^p,v^p,w_1^p,w_2^p,w_3^p) = L(u,v,w_1,w_2,w_3)$ by
  Lemma~\ref{lem-K23}, we have
  $L(\pi') = L(\pi) \cap L(u,v,w_1,w_2,w_3) \cap I(u,v)$. By
  Lemma~\ref{lem-bip-Lpiprime}, $u,v \in L(\pi')$ and thus
  $u,v \in L(\pi)$.

  Suppose now that $F_{G,\pi}(u) > F_{G,\pi}(v)$ (i.e., $\ell > k$)
  and that $u \in L(\pi)$. Since
  $u \in L(u^{k},v^{k}) \cap L(u^p,v^p,w_1^p,w_2^p,w_3^p) = I(u,v)
  \cap L(u,v,w_1,w_2,w_3)$ and $u \in L(u^{\ell-k}) = L(u) = \{u\}$,
  we have
  $L(\pi') = L(\pi) \cap I(u,v) \cap L(u,v,w_1,w_2,w_3) \cap \{u\} =
  \{u\}$, contradicting Lemma~\ref{lem-bip-Lpiprime}.
\end{proof}

\begin{theorem}\label{th-G2conn}
  Any modular graph $G=(V,E)$ with $G^2$-connected medians is an
  ABC-graph.  In particular, any bipartite Helly graph is an
  ABC-graph.
\end{theorem}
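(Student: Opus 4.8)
The plan is to mirror the proof of Theorem~\ref{ABCT-gcm}, replacing the graph $G$ by its square $G^2$ throughout and replacing the edge-lemma~\ref{lem-L-wconv} by its distance-$2$ analogue, Lemma~\ref{lem-bip-L-wconv}. First I would record the structural facts I lean on: since $G$ is modular it is bipartite (three mutually adjacent vertices admit no median), and it satisfies the triangle condition TC, so Lemma~\ref{lem-L-wconv} applies to every edge of $G$; and since $G$ has $G^2$-connected medians, Theorem~\ref{th-cmed-p} with $p=2$ gives that for every profile $\pi$ the function $F_\pi$ is unimodal on $G^2$ and that $\Med_G(\pi)$ induces a connected subgraph of $G^2$. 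Fix an ABC-function $L$ and a profile $\pi$; the goal is to prove $L(\pi)=\Med_G(\pi)$, which suffices since $\Med_G$ is itself an ABC-function.

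For the inclusion $L(\pi)\subseteq\Med_G(\pi)$, I would take $u\in L(\pi)$ and assume $u\notin\Med_G(\pi)$. Unimodality on $G^2$ yields $v$ with $d(u,v)\le 2$ and $F_\pi(v)<F_\pi(u)$. If $d(u,v)=1$, Lemma~\ref{lem-L-wconv}(2) immediately gives $u\notin L(\pi)$, a contradiction. If $d(u,v)=2$, I split on the common neighbours of $u$ and $v$: if some $w\in I^{\circ}(u,v)$ has $F_\pi(w)<F_\pi(u)$, then $d(u,w)=1$ and Lemma~\ref{lem-L-wconv}(2) applies to the edge $uw$; otherwise every $w\in I^{\circ}(u,v)$ satisfies $F_\pi(w)\ge F_\pi(u)>F_\pi(v)$, so $F_\pi$ is not pseudopeakless on the $2$-pair $(u,v)$. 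Lemma~\ref{lem-3neighbors} then forces $|I^{\circ}(u,v)|\ge 3$, and Lemma~\ref{lem-bip-L-wconv}(2) yields $u\notin L(\pi)$. In every case we contradict $u\in L(\pi)$, whence $u\in\Med_G(\pi)$.

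For the reverse inclusion $\Med_G(\pi)\subseteq L(\pi)$, I would use the $G^2$-connectedness of $\Med_G(\pi)$: if $L(\pi)\subsetneq\Med_G(\pi)$, then since $\varnothing\neq L(\pi)\subseteq\Med_G(\pi)$ there exist $u\in L(\pi)$ and $v\in\Med_G(\pi)\setminus L(\pi)$ with $d(u,v)\le 2$, and $F_\pi(u)=F_\pi(v)$ because both are medians. The case $d(u,v)=1$ is dispatched by Lemma~\ref{lem-L-wconv}(1). For $d(u,v)=2$, every common neighbour $w$ has $F_\pi(w)\ge F_\pi(u)$ since $F_\pi(u)$ is the global minimum; if some such $w$ is itself a median, then $F_\pi(u)=F_\pi(w)=F_\pi(v)$ and two applications of Lemma~\ref{lem-L-wconv}(1) along the edges $uw$ and $wv$ propagate membership in $L(\pi)$ from $u$ to $v$; and if every common neighbour has $F_\pi(w)>F_\pi(u)$, then $F_\pi$ is again not pseudopeakless on $(u,v)$, so Lemma~\ref{lem-3neighbors} gives three vertices in $I^{\circ}(u,v)$ and Lemma~\ref{lem-bip-L-wconv}(1) forces $v\in L(\pi)$. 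Each branch contradicts $v\notin L(\pi)$, so $L(\pi)=\Med_G(\pi)$. The ``in particular'' statement follows because bipartite Helly graphs are modular and have $G^2$-connected medians.

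I expect the main obstacle to be the distance-$2$ case analysis: unlike the edge step of Theorem~\ref{ABCT-gcm}, a single descent or connecting step in $G^2$ need not be realizable by an edge lemma, so one must argue that such a step either shortcuts through a distance-$1$ descent, bridges through a median common neighbour, or certifies a failure of pseudopeaklessness on the $2$-pair. The delicate point is verifying, in each branch, that the hypotheses of Lemma~\ref{lem-3neighbors} (failure of pseudopeaklessness on $(u,v)$) and of Lemma~\ref{lem-bip-L-wconv} (three vertices in $I^{\circ}(u,v)$, together with the correct comparison between $F_\pi(u)$ and $F_\pi(v)$) are genuinely satisfied.
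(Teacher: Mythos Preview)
Your proposal is correct and follows essentially the same route as the paper's proof: both directions use unimodality on $G^2$ (respectively $G^2$-connectedness of $\Med_G(\pi)$) to reduce to a distance-$1$ or distance-$2$ step, dispatch the edge case via Lemma~\ref{lem-L-wconv}, and in the genuine $2$-pair case invoke Lemma~\ref{lem-3neighbors} followed by Lemma~\ref{lem-bip-L-wconv}. The only cosmetic difference is in the reverse inclusion: the paper picks $u\in L(\pi)$ and $v\in\Med_G(\pi)\setminus L(\pi)$ at \emph{minimal} distance, which forces every $w\in I^{\circ}(u,v)$ to satisfy $F_\pi(w)>F_\pi(u)$ and thereby eliminates your ``median common neighbour'' sub-case up front; you instead handle that sub-case directly by propagating $L(\pi)$-membership along the two edges $uw$ and $wv$ via Lemma~\ref{lem-L-wconv}(1). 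Both arguments are valid and equally short.
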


\begin{proof}
  Consider an ABC-function $L$ on $V$ and a profile $\pi\in V^*$. We
  first show that $L(\pi) \subseteq \Med_G(\pi)$. Pick any
  $u \in L(\pi)$.  If $u\notin \Med_G(\pi)$, then by
  Theorem~\ref{th-cmed-p}(\ref{th-cmedp-0}) for $p=2$ there exists a
  vertex $v$ such that $1\le d(u,v)\le 2$ and
  $F_{G,\pi}(v) < F_{G,\pi}(u)$. Since $G$ is bipartite, $G$ satisfies
  the triangle condition.  Therefore, if $d(u,v)=1$ we obtain a
  contradiction with Lemma~\ref{lem-L-wconv}(2).  Thus, we can suppose
  that $d(u,v)=2$ and that $F_{G,\pi}(w)\ge F_{G,\pi}(u)$ for any
  neighbor $w$ of $u$. By Lemma~\ref{lem-3neighbors} applied to the
  2-pair $(u,v)$, there exists three distinct vertices
  $w_1,w_2,w_3 \in I^{\circ}(u,v)$.  Consequently, by
  Lemma~\ref{lem-bip-L-wconv}(2), $u \notin L(\pi)$, a
  contradiction. This shows that $L(\pi) \subseteq \Med_G(\pi)$.

  Now we prove the converse inclusion $\Med_G(\pi) \subseteq L(\pi)$.
  Suppose by way of contradiction that $L(\pi)$ is a proper subset of
  $\Med_G(\pi)$.  Pick two vertices $u \in L(\pi)$ and
  $v \in \Med_G(\pi) \setminus L(\pi)$ minimizing $d(u,v)$.  Since
  $\Med_G(\pi)$ is $G^2$-connected, $d(u,v) \leq 2$.  Since
  $u, v \in \Med_G(\pi)$, $F_{G,\pi}(u) = F_{G,\pi}(v)$. If
  $d(u,v) = 1$, we obtain a contradiction with
  Lemma~\ref{lem-L-wconv}(1). If $d(u,v)=2$, by our choice of $u$ and
  $v$, we must have $F_{G,\pi}(w)>F_{G,\pi}(u)$ for any
  $w\in I^{\circ}(u,v)$. Thus, by Lemma~\ref{lem-3neighbors} there
  exists three distinct vertices $w_1,w_2,w_3 \in I^{\circ}(u,v)$ and
  we obtain a contradiction with Lemma~\ref{lem-bip-L-wconv}.

  The second assertion is a consequence of~\cite[Proposition
  62]{GpConMed} establishing that bipartite Helly graphs have $G^2$
  connected medians since bipartite Helly graphs are modular.
\end{proof}

\section{Graphs with the pairing property}\label{sec:pairing}

In this section, we investigate the class of graphs with the pairing
property and the (larger) class of graphs with the double-pairing
property. McMorris et al.~\cite{McMoMuNoPo} established that the
graphs with the pairing property are ABC-graphs.  We show that this
also holds for graphs with the double-pairing property.  We also prove
that both those classes of graphs are proper subclasses of bipartite
Helly graphs, showing that Theorem~\ref{th-G2conn} is a strict
generalization of~\cite[Theorem~4]{McMoMuNoPo}.  We characterize the
graphs with the pairing property in a local-to-global way as bipartite
Helly graphs where each ball of radius 2 satisfies the pairing
property. Finally, we show that the problem of deciding if a graph
satisfies the double-pairing property is in co-NP.

\subsection{Pairing and double-pairing properties}

For an even profile $\pi$ of length $k=2n$ of $G$, a \emph{pairing}
$P$ of $\pi$ is a partition of $\pi$ into $n$ disjoint pairs. For a
pairing $P$, define $D_{\pi}(P)=\sum_{\{ a,b\}\in P} d(a,b)$.  A
pairing $P$ of $\pi$ maximizing the function $D_{\pi}$ is called a
\emph{maximum pairing}.  The notion of pairing was defined by Gerstel
and Zaks~\cite{GeZa}; they also proved the following weak duality
between the functions $F_{\pi}$ and $D_{\pi}$:

\begin{lemma}\label{pairing1}
  For any even profile $\pi$ of length $k=2n$ of $G$, for any pairing
  $P$ of $\pi$, and for any vertex $v$ of $G$, $D_{\pi}(P)\le F_{\pi}(v)$
  and the equality holds if and only if
  $v \in \bigcap_{\{a,b\} \in P} I(a,b)$.
\end{lemma}

\begin{proof}
  Let $P=\left\{ \{ a_i,b_i\}: i=1,\ldots,n\right\}$. By the triangle
  inequality,
  $D_{\pi}(P)=\sum_{i=1}^n d(a_i,b_i)\le \sum_{i=1}^n
  (d(a_i,v)+d(v,b_i)) = \sum_{x\in \pi} d(v,x)=F_{\pi}(v)$. Observe
  that $D_{\pi}(P) = F_{\pi}(v)$ if and only if
  $d(a_i,b_i) = d(a_i,v)+d(v,b_i)$ for all $1 \leq i \leq n$, i.e., if
  and only if $v \in I(a_i,b_i)$ for all $1 \leq i \leq n$.
\end{proof}

We say that a graph $G$ satisfies the \emph{pairing property} if for
any even profile $\pi$ there exists a pairing $P$ of $\pi$ and a
vertex $v$ of $G$ such that $D_{\pi}(P)=F_{\pi}(v)$, i.e., the
functions $F_{\pi}$ and $D_{\pi}$ satisfy the strong duality. Such a
pairing is called a \emph{perfect pairing}. We say that a graph
$G$ satisfies the \emph{double-pairing property} if for any profile
$\pi$, the profile $\pi^{2}$ admits a perfect pairing. Clearly a graph
satisfying the pairing property also satisfies the double-pairing
property.

By Lemma~\ref{pairing1}, the pairing property of~\cite{GeZa} coincides
with the \emph{intersecting-intervals property}
of~\cite{McMoMuNoPo}. By~\cite[Theorem 4]{McMoMuNoPo} the graphs
satisfying the pairing property are ABC-graphs.  It was shown
in~\cite{GeZa} that trees satisfy the pairing property. More
generally, it was shown in~\cite{McMoMuRo} and rediscovered
in~\cite{ChFeGoVa} that cube-free median graphs also satisfy the
pairing property. In was proven in~\cite{McMoMuNoPo} that the complete
bipartite graph $K_{2,n}$ satisfies the pairing property. As observed
in~\cite{McMoMuNoPo}, the $3$-cube is a simple example of a graph not
satisfying the pairing property.  The investigation of the structure
of graphs with the pairing property was formulated as an open problem
in~\cite{GeZa}.

We use the following lemma, whose first assertion follows
from~\cite[Theorem 4]{McMoMuNoPo}. This Lemma~\ref{pairing2}
establishes that any graph with the pairing or the double-pairing
property is an ABC-graph.

\begin{lemma}\label{pairing2}
  Let $L$ be an ABC-function on a graph $G$. Then for any even profile
  $\pi$ in $G$ that admits a perfect pairing, we have
  $L(\pi) = \Med(\pi)$.  Furthermore, for any profile $\pi$ such that
  $\pi^{2k}$ has a perfect pairing for some $k \geq 1$, we have
  $L(\pi) = \Med(\pi)$.
\end{lemma}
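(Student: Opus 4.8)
The plan is to prove both assertions by combining the weak duality of Lemma~\ref{pairing1} with repeated use of the consistency axiom (C), bootstrapped from betweenness (B) and anonymity (A).

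For the first assertion, let $\pi$ be an even profile of length $2n$ admitting a perfect pairing $P = \{\{a_1,b_1\},\ldots,\{a_n,b_n\}\}$, witnessed by a vertex $v^*$ with $D_{\pi}(P) = F_{\pi}(v^*)$. First I would pin down the median set explicitly. By Lemma~\ref{pairing1}, $D_{\pi}(P) \le F_{\pi}(w)$ for every vertex $w$, so $v^*$ minimizes $F_{\pi}$ and $\min_w F_{\pi}(w) = D_{\pi}(P)$; moreover the equality case of Lemma~\ref{pairing1} gives $F_{\pi}(w) = D_{\pi}(P)$ if and only if $w \in \bigcap_{i=1}^n I(a_i,b_i)$. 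Hence $\Med(\pi) = \bigcap_{i=1}^n I(a_i,b_i)$, and in particular $v^*$ lies in every interval $I(a_i,b_i)$.

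Next I would compute $L(\pi)$ and show it equals the same intersection. By anonymity (A), I may reorder $\pi$ as $(a_1,b_1,\ldots,a_n,b_n)$. I then prove by induction on $j$ that $L(a_1,b_1,\ldots,a_j,b_j) = \bigcap_{i=1}^j I(a_i,b_i)$. The base case $j=1$ is exactly betweenness (B), namely $L(a_1,b_1) = I(a_1,b_1)$. For the inductive step, I concatenate the subprofile $(a_1,b_1,\ldots,a_j,b_j)$ with $(a_{j+1},b_{j+1})$; since $v^* \in \bigcap_{i=1}^{n} I(a_i,b_i)$ belongs both to $L(a_1,b_1,\ldots,a_j,b_j) = \bigcap_{i=1}^j I(a_i,b_i)$ (induction hypothesis) and to $L(a_{j+1},b_{j+1}) = I(a_{j+1},b_{j+1})$ (B), the two sets intersect, so consistency (C) yields $L(a_1,b_1,\ldots,a_{j+1},b_{j+1}) = \bigcap_{i=1}^{j+1} I(a_i,b_i)$. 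At $j=n$ this gives $L(\pi) = \bigcap_{i=1}^n I(a_i,b_i) = \Med(\pi)$.

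For the second assertion, two observations suffice. First, since $F_{\pi^{m}} = m\,F_{\pi}$ for every $m \ge 1$, the minimizers are unchanged, so $\Med(\pi^{2k}) = \Med(\pi)$. Second, $L$ is idempotent under concatenation: because $L$ is always nonempty, applying (C) with $\rho = \pi$ gives $L(\pi^2) = L(\pi) \cap L(\pi) = L(\pi)$, and an easy induction then yields $L(\pi^m) = L(\pi)$ for all $m \ge 1$. Now the even profile $\pi^{2k}$ has length $2k|\pi|$ and, by hypothesis, admits a perfect pairing, so the first assertion applies to it and gives $L(\pi^{2k}) = \Med(\pi^{2k})$. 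Chaining the three equalities gives $L(\pi) = L(\pi^{2k}) = \Med(\pi^{2k}) = \Med(\pi)$, as required. The argument is largely a clean bookkeeping of the axioms; the only point requiring care is ensuring the nonempty-intersection hypothesis of consistency is met at every step of the induction, which is precisely what the witness vertex $v^*$ of the perfect pairing guarantees, and no structural property of $G$ beyond the existence of the perfect pairing is needed.
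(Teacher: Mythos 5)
Your proof is correct and follows essentially the same route as the paper: identify $\Med(\pi)=\bigcap_{\{a,b\}\in P} I(a,b)$ via the equality case of Lemma~\ref{pairing1}, then use (A), (B), (C) to show $L(\pi)$ equals the same intersection, and reduce the second assertion to the first via $L(\pi^{2k})=L(\pi)$ and $\Med(\pi^{2k})=\Med(\pi)$. Your explicit induction, with the witness $v^*$ guaranteeing the nonempty-intersection hypothesis at each step, simply spells out the detail behind the paper's one-line invocation of (C).
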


\begin{proof}
  Consider a perfect pairing $P$ of an even profile $\pi$ and let
  $v \in V$ such that $D_\pi(P) = F_\pi(v)$.  By Lemma~\ref{pairing1},
  $v$ is a median of $\pi$ and $v \in \bigcap_{\{a,b\} \in P}
  I(a,b)$. Consequently, $\Med(\pi) = \bigcap_{\{a,b\} \in P} I(a,b)$.
  By (B) and (C),
  $L(\pi) = \bigcap_{\{a,b\} \in P} L(a,b) = \bigcap_{\{a,b\} \in P}
  I(a,b)$, and thus $L(\pi) = \Med(\pi)$.

  Consider now a profile $\pi$ and a perfect pairing $P$ of
  $\pi^{2k}$. By the first assertion, we have
  $L(\pi^{2k}) = \Med(\pi^{2k})$. By (C), we have
  $L(\pi^{2k}) = \bigcap_{i=1}^{2k }L(\pi) = L(\pi)$ and
  $\Med(\pi^{2k}) = \Med(\pi)$.
\end{proof}

Given a profile $\pi$ on a graph $G =(V,E)$, deciding whether $\pi$
(or $\pi^{2}$) admits a perfect pairing can be reduced to a perfect
$\pi$-matching problem as follows. Consider a vertex $u \in \Med(\pi)$
and the auxiliary graph $A_u=(V,E_u)$ where $vw \in E_u$ if and only
$u \in I(v,w)$ in $G$, i.e., $d(v,w)=d(v,u)+d(u,w)$. Then $\pi$ admits
a perfect pairing if and only if the graph $A_u$ has a perfect
$\pi$-matching, by which we mean there exists a multiset $P$ of edges of $E_u$
such that each vertex $v \in V$ belongs to exactly $\pi(v)$ edges of
$P$.

In fact, the previous remark holds for any weight function
$b: V \to \R^+$ (note that profiles correspond to the weight functions with
integer values).  Analogously to the definitions of $F_\pi$ and
$\Med_\pi$, for a weight function $b$, let
$F_b: v \in V \mapsto \sum_{z \in V} b(z) d(v,z)$ and let $\Med(b)$ be
the set of vertices minimizing $F_b$.  Recall that given a weight
function $b: V \to \R^+$, a fractional perfect $b$-matching of
$A_u=(V,E_u)$ is a weight function $x : E_u \to \R^+$ on the edges of
$A_u$ such that for each $v \in V$,
$\sum_{e \in E_u: v \in e} x(e) = b(v)$.

\begin{lemma}\label{lem-bMatchMed}
  For any weight function $b: V \to \R^+$ such that $A_u$ admits a
  fractional perfect $b$-matching, we have $u \in \Med(b)$.
\end{lemma}

\begin{proof}
  Consider a fractional perfect $b$-matching $x: E_u \to \R^+$ of
  $A_u$ and observe by triangle inequality that for any $v \in V$,
  \[ F_b(v) = \sum_{z \in V} b(z) d(v,z) = \sum_{z \in V} (\sum_{e:z
      \in e} x(e)) d(v,z) = \sum_{e=zz' \in E_u} x(e) (d(v,z)+d(v,z'))
    \geq \sum_{e=zz' \in E_u} x(e) d(z,z').\]

  The last inequality is an equality if and only if $v \in I(z,z')$
  for all edges $e = zz'$ such that $x(e) > 0$. By the definition of
  the edges of $A_u$, we thus conclude that
  $ F_b(u) = \sum_{e=zz' \in E_u} x(e) d(z,z')$ and consequently, that
  $u \in \Med(b)$.
\end{proof}

Since deciding if a graph $G$ admits a fractional perfect $b$-matching
can be done in (strongly) polynomial
time~\cite{Anstee,Marsh,Pulleyblank}, deciding if a profile $\pi$ on a
graph $G$ has a perfect pairing can be done in polynomial time.

For a given weight function $b$, the set of all fractional perfect
$b$-matchings of $A_u$ is described by the following polytope
$\dM(b,u)$ (see~\cite[Chapter 31]{SchrijverA}):
\[
  \dM({b,u}) \quad
  \begin{cases}
    \sum_{e:v \in e} x(e) = b(v) &  \text{for all } v \in V\\
    x(e) \geq 0 & \text{for all } e \in E_u\\
  \end{cases}
\]
If $b$ is a profile $\pi$, i.e., $b(v) \in \N$ for all $v \in V$, then
the polytope $\dM({b,u})$ is either empty (and $A_u$ does not have any
fractional perfect $b$-matching) or each vertex
$\bx={(x(e))}_{e\in E_u}$ of $\dM(b,u)$ is
half-integral~\cite{Balinski} (see also~\cite[Theorem
30.2]{SchrijverA}), i.e., for each $e \in E_u$, we have
$x(e) \in \{0,\frac{1}{2},1\}$.
Consequently, we have the following result.

\begin{lemma}\label{lem-integral-pm}
  Given a graph $G$, an integral weight function $b: V \to \N$ such
  that $b(v)$ is even for all $v \in V$, and a vertex $u \in V$, then
  either the auxiliary graph $A_u$ has an integral perfect
  $b$-matching, or $A_u$ has no fractional perfect $b$-matching.
\end{lemma}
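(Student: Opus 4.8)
The plan is to assume that $A_u$ admits a fractional perfect $b$-matching (otherwise the second alternative of the statement holds and we are done) and to produce an integral one by a rounding argument based on the half-integrality of the extreme points of $\dM(b,u)$ together with the hypothesis that $b$ is even.

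First, since $\dM(b,u)$ is nonempty and bounded (each coordinate $x(e)$ is at most $b(v)$ for an endpoint $v$ of $e$), it has an extreme point $\bx$, which by the half-integrality result already invoked (\cite{Balinski}, \cite[Theorem~30.2]{SchrijverA}) satisfies $x(e)\in\{0,\frac12,1\}$ for every $e\in E_u$. Then $y:=2\bx$ is integral and nonnegative, and $\sum_{e:v\in e} y(e)=2b(v)$ for all $v$, so $y$ is an integral perfect $2b$-matching. I would view $y$ as a multigraph $Y$ on the vertex set $V$ in which each edge $e\in E_u$ is taken with multiplicity $y(e)$; every vertex $v$ then has even degree $2b(v)$ in $Y$.

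Next, I would analyze the connected components of $Y$. Each component $C$ has all degrees even, hence is Eulerian; moreover, by the handshake identity, $|E(C)|=\frac12\sum_{v\in C}\deg_Y(v)=\sum_{v\in C} b(v)$, which is even precisely because every $b(v)$ is even. This is the single place where the evenness hypothesis on $b$ is used. Fixing an Eulerian circuit of $C$ and traversing it, I would color its successive edges alternately with two colors; since $|E(C)|$ is even, this $2$-coloring closes up consistently around the circuit. At each passage through a vertex the two consecutive edges used receive opposite colors, so each vertex ends up incident to exactly $\frac12\deg_Y(v)=b(v)$ edges of each color.

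Performing this coloring in every component splits $Y$ into two sub-multigraphs $Y_1,Y_2$, each with $\deg_{Y_i}(v)=b(v)$ for all $v$ and with edges drawn from $E_u$; hence each $Y_i$ is an integral perfect $b$-matching of $A_u$, yielding the first alternative. The only delicate point is the possible presence of odd cycles in the support of the half-integral $\bx$, which would obstruct a naive $\pm\frac12$ rounding of $\bx$ directly; doubling to $y=2\bx$ and then using that every component of $Y$ has an even number of edges (a consequence of $b$ being even) is exactly what circumvents this obstacle.
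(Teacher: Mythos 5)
Your proof is correct, but it takes a genuinely different route from the one the paper intends. The paper derives the lemma from the half-integrality of the vertices of the fractional perfect $b$-matching polytope by a scaling argument: since $b$ is even, $b/2$ is integral, so if $\dM(b,u)$ is nonempty then so is $\dM(b/2,u)=\frac{1}{2}\dM(b,u)$; a vertex of the latter is half-integral, and doubling it yields an integral point of $\dM(b,u)$, i.e., an integral perfect $b$-matching (this doubling step is also exactly how Lemma~\ref{lem-integral-k} invokes the present lemma). You instead apply half-integrality to $b$ itself, double the resulting vertex to obtain an integral perfect $2b$-matching, and then split the associated even multigraph into two perfect $b$-matchings by alternately $2$-colouring an Eulerian circuit of each component; the evenness of $b$ enters through the parity of $|E(C)|=\sum_{v\in C}b(v)$, which is precisely what makes the alternating colouring close up, and the degree count at each passage through a vertex is right. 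Both arguments are sound: the scaling argument is a one-liner once the half-integrality theorem is granted, while your Eulerian splitting is longer but more self-contained combinatorially and makes the role of the evenness hypothesis completely explicit. One small remark: for a general integral $b$ the vertices of $\dM(b,u)$ need only lie in $\frac{1}{2}\N$ rather than in $\{0,\frac{1}{2},1\}$ (the paper's phrasing is also imprecise here), but your argument uses only half-integrality, so this does not affect its validity.
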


Lemma~\ref{pairing2} suggests that graphs with the $2k$-pairing
property (i.e., graphs such that for any profile $\pi$, the profile
$\pi^{2k}$ has a perfect pairing) can be interesting generalizations
of graphs with the pairing or double-pairing properties. However, the
following result shows that they coincide with the graphs with the
double-pairing property.

\begin{lemma}\label{lem-integral-k}
  Given a graph $G=(V,E)$, a vertex $u \in V$, and a profile $\pi$ on
  $G$, if $\dM(\pi,u)$ is non-empty, then $\pi^2$ admits a perfect
  pairing.  Consequently, if $G$ has a perfect $\pi^{2k}$-pairing for
  some integer $k \geq 1$, then $G$ has a perfect $\pi^2$-pairing.
\end{lemma}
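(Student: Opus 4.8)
The plan is to reduce everything to the half-integrality statement of Lemma~\ref{lem-integral-pm} by a doubling-and-scaling argument. For the first assertion, suppose $\dM(\pi,u)$ is non-empty and fix a fractional perfect $\pi$-matching $x\colon E_u \to \R^+$ of $A_u$. Scaling by $2$, the function $2x$ is a fractional perfect matching of $A_u$ for the integral weight function $b = 2\pi$ (i.e.\ for the profile $\pi^2$), so $\dM(2\pi,u)$ is non-empty as well. Since $b = 2\pi$ takes only even integer values, Lemma~\ref{lem-integral-pm} applies and forces $A_u$ to admit an \emph{integral} perfect $(2\pi)$-matching $P$.

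Next I would translate this integral matching back into a pairing. A multiset $P$ of edges of $A_u$ in which each vertex $w$ is covered exactly $2\pi(w)$ times is precisely a pairing of the profile $\pi^2$, and by the definition of $E_u$ every pair $\{a,b\} \in P$ satisfies $u \in I(a,b)$, whence $u \in \bigcap_{\{a,b\} \in P} I(a,b)$. The equality condition of Lemma~\ref{pairing1} then yields $D_{\pi^2}(P) = F_{\pi^2}(u)$, so $P$ together with the witness $u$ is a perfect pairing of $\pi^2$, which proves the first assertion.

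For the consequence, suppose $\pi^{2k}$ admits a perfect pairing, witnessed by a pairing $Q$ and a vertex $v$ with $D_{\pi^{2k}}(Q) = F_{\pi^{2k}}(v)$. By the equality condition of Lemma~\ref{pairing1}, $v \in I(a,b)$ for every pair of $Q$, so $Q$ is an integral perfect $(2k\pi)$-matching of $A_v$; in particular $\dM(2k\pi,v)$ is non-empty. Scaling any element of $\dM(2k\pi,v)$ by $1/(2k)$ produces a fractional perfect $\pi$-matching of $A_v$, so $\dM(\pi,v)$ is non-empty, and applying the first assertion with $u = v$ shows that $\pi^2$ admits a perfect pairing.

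The argument is just a sequence of scalings, so I do not expect a deep obstacle; the one point requiring care is the bookkeeping of the correspondence between perfect pairings of a profile $\rho$ and integral perfect $\rho$-matchings of the auxiliary graph $A_v$ (through the equality condition of Lemma~\ref{pairing1}, and optionally Lemma~\ref{lem-bMatchMed} to recover $v \in \Med$), together with ensuring that the half-integrality hypothesis of Lemma~\ref{lem-integral-pm} is met by passing to the even-valued weight function $2\pi$ rather than $\pi$ itself.
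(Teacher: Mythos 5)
Your proof is correct and follows essentially the same route as the paper: doubling the weight function to invoke the half-integrality statement of Lemma~\ref{lem-integral-pm}, translating the integral $(2\pi)$-matching of $A_u$ back into a perfect pairing of $\pi^2$ via the equality condition of Lemma~\ref{pairing1}, and for the second assertion rescaling a perfect $\pi^{2k}$-pairing by $1/(2k)$ to see that $\dM(\pi,v)$ is non-empty. The only cosmetic difference is that the paper picks $u\in\Med(\pi^{2k})=\Med(\pi)$ whereas you use the witness vertex of the pairing directly, which is the same vertex up to Lemma~\ref{pairing1}.
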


\begin{proof}
Observe that if $\dM(\pi,u)$ is non-empty, then $\dM(\pi^2,u)$ is
  also non-empty. By Lemma~\ref{lem-integral-pm}, $A_u$ admits an
  integral perfect $\pi^2$-matching. Consequently, $\pi^2$ admits a
  perfect pairing.

  To prove the second assertion, consider a profile $\pi$ on $G=(V,E)$
  such that $\pi^{2k}$ admits a perfect pairing and pick any
  $u \in \Med(\pi^{2k}) = \Med(\pi)$.  Observe that if $\pi^{2k}$ has
  a perfect pairing, then $A_u$ has a fractional perfect
  $\pi$-matching (where $x(e)$ is a multiple of $\frac{1}{2k}$ for
  each $e \in E_u$). Consequently, the polytope $\dM({\pi,u})$ is
  non-empty and thus $\pi^2$ admits a perfect pairing.
\end{proof}

\subsection{(Double-)pairing property implies bipartite Hellyness}

We now establish that graphs with the double-pairing property (and
thus with the pairing property) are bipartite Helly graphs. Recall
that bipartite Helly graphs are graphs in which the collection of
half-balls of $G$ has the Helly property.
The bipartite Helly graphs have been characterized in several nice
ways by Bandelt, D\"ahlmann, and Sch\"utte~\cite{BaDaSch} (the
papers~\cite{Ba_C,BaDaSch} also characterize the bipartite Helly
graphs via medians, Condorcet vertices, and plurality vertices):

\begin{theorem}[\cite{BaDaSch}]\label{absolute-retracts}
  For a bipartite graph $G$, the following conditions are equivalent:
  \begin{enumerate}[(1)]
  \item\label{th-bar-1} $G$ is a bipartite Helly graph;
  \item\label{th-bar-2} $G$ is a modular graph of breadth at most two;
  \item\label{th-bar-3} $G$ is a modular graph such that every induced
    $B_n$ ($n\ge 4$) extends to $\widehat{B}_n$ in $G$;
  \item\label{th-bar-4} $G$ satisfies the following interval
    condition: for any vertices $u$ and $v$ with $d(u,v)\ge 3$, the
    neighbors of $v$ in $I(u,v)$ have a second common neighbor $x$ in
    $I(u,v)$.
  \end{enumerate}
\end{theorem}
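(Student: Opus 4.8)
The plan is to prove the four statements equivalent by a cycle of implications, with modularity serving as the common skeleton. Conditions (2) and (3) build modularity in explicitly, and condition (1) forces it by the remark recorded earlier in the excerpt (bipartite Helly graphs are modular); so the only place where modularity must be \emph{extracted} is in an implication leaving the bare interval condition (4). This step is in fact clean: since $G$ is bipartite the triangle condition is vacuous, so modularity is equivalent to the quadrangle condition QC, and given $v,w,z$ with $d(v,w)=2$, $z\sim v,w$, and $d(u,v)=d(u,w)=d(u,z)-1\ge 2$, I would simply apply (4) to the pair $(u,z)$, whose distance is $d(u,v)+1\ge 3$. The vertices $v,w$ are neighbours of $z$ lying in $I(u,z)$, so their common neighbours have a second common neighbour $x\in I(u,z)$; this $x$ is adjacent to $v$ and $w$ and satisfies $d(u,x)=d(u,z)-2=d(u,v)-1$, which is exactly QC.

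For the ``easy'' half of the cycle I would deduce (4) from (1) directly from the half-ball Helly property. Let $d=d(u,v)\ge 3$ and let $v_1,\dots,v_k$ be the neighbours of $v$ in $I(u,v)$, so $d(u,v_i)=d-1$. Within the colour class containing $u$, consider the half-balls obtained from $B_1(v_1),\dots,B_1(v_k)$ and from $B_{d-2}(u)$. These pairwise intersect: any two $B_1(v_i),B_1(v_j)$ share $v$, while each $B_1(v_i)$ meets $B_{d-2}(u)$ at a neighbour of $v_i$ on a geodesic toward $u$. By bipartite Hellyness they have a common vertex $x$; then $x\sim v_1,\dots,v_k$, and $d(u,x)\le d-2$ forces $d(u,x)=d-2$ (bipartiteness), whence $x\ne v$ and $d(x,v)=2$, so $x\in I(u,v)$ is the required second common neighbour.

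The two remaining reformulations are packaging steps that make the hard direction tractable. The implications (2)$\Rightarrow$(3) and (3)$\Rightarrow$(4) should translate a bound on breadth, and then the extendability of each induced $B_n$ to $\widehat{B}_n$, into the purely local interval statement (4); conversely one reads off from a minimal breadth-three or non-extendable configuration a concrete forbidden obstruction (morally the $3$-cube, which the excerpt already flags as failing the pairing property). I expect each of these to be a matching-up of definitions rather than a genuine difficulty.

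The main obstacle is the closing local-to-global implication (4)$\Rightarrow$(1): reconstructing the \emph{global} Helly property for the whole family of half-balls from a condition about single intervals. Here I would argue by induction on the number of half-balls, passing to a minimal counterexample: take a minimal pairwise-intersecting family of half-balls with empty common intersection, choose two balls whose combined constraint is extremal, and use modularity together with the interval condition (4) to ``descend'' — replacing a far apex by a closer second common neighbour in the relevant interval and thereby shrinking a radius while preserving pairwise intersection. The delicate points will be controlling parities across the two colour classes, ensuring that the vertex produced by (4) genuinely lies in the intersection of the shrunken half-balls, and showing the descent terminates; this is where the breadth-two reformulation (2) and the $B_n\!\to\!\widehat{B}_n$ extension (3) earn their keep, since they guarantee that the obstruction one would otherwise have to rule out by hand simply cannot arise.
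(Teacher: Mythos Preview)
The paper does not prove this theorem at all: it is quoted from Bandelt, D\"ahlmann, and Sch\"utte~\cite{BaDaSch} as a known characterization, with no proof or sketch given. There is therefore nothing in the paper to compare your proposal against.

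As a standalone sketch, your argument has a structural gap: you never close the cycle through condition~(2). You prove (1)$\Rightarrow$(4), extract modularity from (4), claim (2)$\Rightarrow$(3)$\Rightarrow$(4) as routine, and sketch (4)$\Rightarrow$(1); but nothing in your outline establishes that any of (1), (3), or (4) implies breadth at most two. Your (1)$\Rightarrow$(4) and (4)$\Rightarrow$modularity steps are correct as written. The (4)$\Rightarrow$(1) sketch is plausible in spirit but remains a plan rather than a proof: the descent argument you describe (replace a far centre by a second common neighbour and shrink a radius) needs a precise invariant and a termination measure, and your appeal to (2) and (3) as the tools that ``earn their keep'' in this step is circular unless you have independently derived them from (4).
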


The graph $B_n$ is the bipartite complete graph $K_{n,n}$ minus a
perfect matching, i.e., the bipartition of $B_n$ is defined by
$a_1,\ldots,a_n$ and $b_1,\ldots,b_n$ and $a_i$ is adjacent to $b_j$
if and only if $i\ne j$. The graph $\widehat{B}_n$ is obtained from
$B_n$ by adding two adjacent new vertices $a$ and $b$ such that $a$ is
adjacent to all vertices $b_1,\ldots,b_n$ and $b$ is adjacent to all
vertices $a_1,\ldots,a_n$.  Recall that a graph $G$ is of
\emph{breadth at most two}~\cite{BaDaSch} if for any vertex $u$ and
any set of vertices $W$ of $G$, the following implication holds:
$\bigcap_{v\in W} I(u,v)=\{ u\}$ implies that there exists two
vertices $w',w''\in W$ such that $I(u,w')\cap I(u,w'')=\{ u\}$.

\begin{proposition}\label{pairing->bar}
  If a graph $G$ satisfies the double-pairing property, then $G$ is a
  bipartite Helly graph.
\end{proposition}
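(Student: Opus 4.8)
The plan is to prove the contrapositive-flavored statement directly: assuming $G$ satisfies the double-pairing property, I will verify each of the defining features of a bipartite Helly graph, using the characterization in Theorem~\ref{absolute-retracts}. The natural target among the equivalent conditions is \eqref{th-bar-4}, the interval condition, since it is the most local and can be tested against cleverly chosen profiles. But before that, I must first establish two global facts that the other conditions presuppose: that $G$ is bipartite, and that $G$ is modular.

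\emph{Bipartiteness and modularity.} To see that $G$ is bipartite, I would argue that an odd cycle obstructs perfect pairings. Concretely, take a shortest odd cycle $C$ (or more simply a triangle or the vertex set of an odd cycle as a profile) and consider the profile $\pi$ consisting of its vertices; the profile $\pi^2$ then has even length, and I would show via Lemma~\ref{pairing1} that no pairing can achieve $D_{\pi^2}(P)=F_{\pi^2}(v)$ for any $v$, because the odd-cycle metric forces a strict triangle-inequality loss in at least one pair. For modularity, I would use the known characterization of modular graphs as the bipartite weakly modular graphs, or directly verify the quadrangle condition QC by applying the double-pairing property to a profile supported on the three configuration vertices $v,w,z$ (with $d(v,z)=d(w,z)=1$, $d(v,w)=2$) together with a ``witness'' vertex $u$: a perfect pairing of the doubled profile must route through a common neighbor $x$ of $v,w$ lying one step closer to $u$, which is exactly the conclusion of QC.

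\emph{The interval condition.} This is where I expect the real work and the main obstacle to lie. I must show: for vertices $u,v$ with $d(u,v)\ge 3$, the neighbors of $v$ lying in $I(u,v)$ have a \emph{second} common neighbor $x\in I(u,v)$ (the first being $v$ itself). The strategy is to assume the condition fails for some pair $(u,v)$, and then engineer a profile $\pi$ whose doubled version $\pi^2$ admits \emph{no} perfect pairing, contradicting the hypothesis. The design of $\pi$ should concentrate weight on $u$, on $v$, and on the neighbors $w_1,\dots,w_m$ of $v$ in $I(u,v)$, chosen so that the unique median-type vertex through which all pairs would have to be routed (to attain equality in Lemma~\ref{pairing1}) is forced to be a common neighbor of all the $w_i$ inside $I(u,v)$; the absence of a second such common neighbor then makes equality $D_{\pi^2}(P)=F_{\pi^2}(v)$ unattainable for every choice of $v$ and pairing $P$, the required contradiction.

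The delicate point is balancing the multiplicities in $\pi$ so that (i) the median set is pinned down where I want it, and (ii) every candidate pairing that could achieve strong duality is forced to use an edge $\{w_i,w_j\}$ or $\{w_i,u\}$ whose interval $I(\cdot,\cdot)$ must pass through the hypothetical—and by assumption nonexistent—second common neighbor. I would lean on Lemma~\ref{pairing1} throughout to translate ``perfect pairing exists'' into ``all pairs' intervals share a common vertex,'' and on Lemma~\ref{lem-integral-k} to pass freely between $\pi^2$ and higher even powers if the half-integrality of the matching polytope makes the combinatorics cleaner. The main obstacle is thus purely constructive: exhibiting the precise profile and verifying that no pairing of its square attains the duality bound, which amounts to a careful case analysis of how pairs can cross the ``gap'' created by the missing second common neighbor.
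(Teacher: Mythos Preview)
Your bipartiteness step matches the paper's: take a shortest odd cycle, form the profile $\tau=(u,v,w)$ on an edge $uv$ and its opposite vertex $w$, and check by hand that none of the three nontrivial pairings of $\tau^2$ attains $F_{\tau^2}(u)$. Your modularity sketch is in the same spirit, though the paper's execution is tighter than what you describe: it does \emph{not} include the common neighbor $z$ in the profile, and it does not argue that a pairing ``must route through'' the desired vertex. Instead it assumes QC fails for a 2-pair $u,v$ and a far vertex $w$ with $I(u,w)\cap I(v,w)=\{w\}$, takes $\tau=(u,v,w)$, shows $u,v\in\Med(\tau^2)$ by a direct distance computation, and then exhaustively checks the three pairings of $\tau^2$. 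Your ``routes through $x$'' intuition is the right picture but is not by itself a proof; the finite check is what actually closes it.

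The substantive divergence is the Helly step. You aim at condition~\eqref{th-bar-4} (the interval condition); the paper instead verifies condition~\eqref{th-bar-3}, that every induced $B_n$ extends to $\widehat{B}_n$. The paper's argument is short and has a key device with no analogue in your plan: take a \emph{maximal} non-extending $B_n$, set $\tau=(a_1,\dots,a_n)$, and observe that a perfect pairing of $\tau^2$ forces the existence of a vertex $x$ adjacent to all $a_i$ (and, symmetrically, a $y$ adjacent to all $b_i$); maximality of the chosen $B_n$ then forces $x\sim y$, giving the $\widehat{B}_n$. That extremal step is what makes the argument go through.

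Your route via~\eqref{th-bar-4} is not obviously wrong, but the obstacle you flag as ``purely constructive'' is real and you have not addressed it. Concretely: even if the neighbors $w_1,\dots,w_m$ of $v$ in $I(u,v)$ have no second common neighbor \emph{inside} $I(u,v)$, they may have a common neighbor $x'\neq v$ \emph{outside} $I(u,v)$ (in a bipartite graph such an $x'$ would satisfy $d(x',u)=d(u,v)$), and a perfect pairing of your doubled profile could land at $v$ or at such an $x'$ without contradicting anything. You would need to weight the profile (e.g., with copies of $u$) to force any pairing witness into $I(u,v)\setminus\{v\}$ \emph{and} to be a common neighbor of all the $w_i$; controlling both simultaneously is exactly where an extremal assumption like the paper's maximal-$B_n$ choice does the work, and your sketch supplies no substitute for it.
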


\begin{proof}
  First we prove that $G$ is bipartite. Suppose the contrary and let
  $C$ be an odd cycle of minimum length of $G$. Then $C$ is an
  isometric cycle of $G$.  Let $uv$ be an edge of $C$ and $w$ be the
  vertex of $C$ opposite to $uv$. Let $d(u,w)=d(v,w)=k\ge 1$. Consider
  the profile $\tau = (u,v,w)$ and let $\pi =
  \tau^2=(u,u,v,v,w,w)$. Then $F_{\pi}(u)=F_{\pi}(v)=2k+2$.
For any other vertex $x$ of $G$, let $k_1=d(x,u) \geq 1$,
  $k_2=d(x,v) \geq 1$, and $k_3=d(x,w)$, where $k_1+k_3\ge k$ and
  $k_2+k_3\ge k$.
Notice that $F_{\pi}(x)=2k_1+2k_2+2k_3\ge 2k+2k_2\ge 2k+2$.  Thus
  $u,v\in \Med(\pi)$. Up to symmetry, $\pi$ has three non-trivial
  pairings: $P_1=\{ \{ u,v\}, \{ v,w\}, \{ w,u\}\}$,
  $P_2=\{ \{u,w\}, \{ u,w\}, \{ v,v\}\}$,
  $P_3=\{ \{u,v\}, \{ u,v\}, \{ w,w\}\}$. Notice that
  $D_{\pi}(P_1)=2k+1, D_{\pi}(P_2)=2k$, and $D_{\pi}(P_3)=2$, i.e.,
  $D_{\pi}(P_i)<F_{\pi}(u)$ for $i=1,2,3$, and thus $\pi = \tau^2$
  does not admit a perfect pairing.

  By Theorem~\ref{absolute-retracts}, $G$ is a bipartite Helly graph
  if and only if $G$ is a modular graph and every induced $B_n$
  ($n\ge 4$) extends to $\widehat{B}_n$. To prove that $G$ is modular,
  since $G$ is bipartite, it suffices to show that $G$ satisfies the
  quadrangle condition. Suppose by way of contradiction that $G$
  contains four vertices $u,v,w,y$ such that $y\sim u,v$ and
  $d(u,w)=d(v,w)=d(y,w)-1=k\ge 2$ and $I(u,w)\cap I(v,w)=\{ w\}$.
  Consider the profile $\tau = (u,v,w)$ and let
  $\pi = \tau^2=(u,u,v,v,w,w)$. Then
  $F_{\pi}(u)=F_{\pi}(v)=2k+4$. Pick any other vertex $x$ of $G$ and
  let $d(x,u)=k_1\ge 1, d(x,v)=k_2\ge 1$, and $d(x,w)=k_3\ge 0$.  Then
  $k_1+k_3\ge k$ and $k_2+k_3\ge k$.
Notice that $F_{\pi}(x)=2k_1+2k_2+2k_3$. Suppose that
  $F_{\pi}(x)<F_{\pi}(u)$, i.e., $2k_1+2k_2+2k_3<2k+4$. Since
  $k_1+k_3\ge k$ (resp. $k_2+k_3 \geq k$) this is possible only if
  $k_2=1$ (resp. $k_1=1$).
Thus $x$ is a common neighbor of $u$ and $v$.  Since
  $I(u,w)\cap I(v,w)=\{ w\}$ and $G$ is bipartite, we must have
  $k_3=k+1$, i.e., $F_{\pi}(x)=2k+6>2k+4$, a contradiction. This shows
  that $u,v \in \Med(\pi)$. Again, up to symmetry, $\pi$ has three
  non-trivial pairings:
  $P_1=\{ \{ u,v\}, \{ v,w\}, \{ w,u\}\}, P_2=\{ \{u,w\}, \{ u,w\}, \{
  v,v\}\}$, and $P_3=\{ \{u,v\}, \{ u,v\}, \{ w,w\}\}$. Note that
  $D_{\pi}(P_1)=2k+2, D_{\pi}(P_2)=2k$, and $D_{\pi}(P_3)=4$, i.e.,
  $D_{\pi}(P_i)<F_{\pi}(u)$ for $i=1,2,3$, and thus $\pi = \tau^2$
  does not admit a perfect pairing.  Consequently, any graph $G$
  satisfying the double-pairing property is modular.

  It remains to prove that every induced $B_n$
  ($n\ge 4$) extends to $\widehat{B}_n$. Suppose by way of
  contradiction that $G$ contains induced $B_n, n\ge 3$ which are not
  included in $\widehat{B}_n$ and pick such a $B_n$ with a maximum
  number of vertices (this maximum exists since $G$ is
  finite). Suppose that the vertex set of $B_n$ is defined by the
  bipartition
  $\left\{a_1,\ldots,a_n\right\}\cup \left\{
    b_1,\ldots,b_n\right\}$. Consider the profile
  $\tau= \left\{ a_1,\ldots,a_n\right\} $ and let
  $\pi=\tau^2=\left\{ a_1,a_1,\ldots,a_n,a_n\right\}$. One can easily
  see that if $P$ is a maximum pairing of $\pi$, then $D_{\pi}(P)=2n$.
  Notice that $F_{\pi}(b_i)=2(n-1)+6=2n+4$ and
  $F_{\pi}(a_i)=4n-4$. Therefore, if $\pi$ has a perfect pairing, then
  there exists a vertex $x\ne a_1,\ldots,a_n,b_1,\ldots,b_n$ such that
  $F_{\pi}(x)=D_{\pi}(P)=2n$. If in the unique bipartition of the
  vertices of $G$, $x$ belongs to the same part as $a_1,\ldots,a_n$,
  then $d(x,a_i)\ge 2$ and thus $F_{\pi}(x)\ge 4n$. Therefore, $x$
  belongs to the same part as $b_1,\ldots,b_n$ and to have
  $F_{\pi}(x)=D_{\pi}(P)=2n$ necessarily $x$ must be adjacent to all
  vertices $a_1,\ldots,a_n$. Similarly, there exists a vertex
  $y\ne a_1,\ldots,a_n$ adjacent to all vertices $b_1,\ldots,b_n$. If
  the vertices $x$ and $y$ are not adjacent in $G$, then $x,y$
  together with $a_1,\ldots, a_n,b_1,\ldots,b_n$ define a $B_{n+1}$,
  contrary to the maximality choice of $B_n$.  Thus, any graph $G$
  satisfying the pairing property is a bipartite Helly graph.
\end{proof}

\subsection{Bipartite Helly graphs without the pairing property}\label{subs-bipHwoPP}
Unfortunately, there exist bipartite Helly graphs that do not satisfy
the double-pairing property, and thus that do not satisfy the pairing
property.  To construct them, we relate the bipartite Helly graphs
with the Helly hypergraphs.  A hypergraph $\cH=(X,\cE)$ consists of a
set $X$ and a family $\cE$ of subsets of $X$. A hypergraph $\cH$ is
called a \emph{Helly hypergraph}~\cite{Berge} if for any subfamily
$\cE'$ of $\cE$ of pairwise intersecting sets the intersection
$\bigcap \cE'$ is nonempty. For any vertex $u$ of $G$ we can define a
hypergraph ${\mathcal H}_u$ in the following way: the ground-set of
${\mathcal H}_u$ is the set $N(u)$ of neighbors of the vertex $u$ and
for each vertex $v$ of $G$, the set $R_v:=N(u)\cap I(u,v)$ is a
hyperedge of ${\mathcal H}_u$.  Then the condition~\ref{th-bar-2} of
Theorem~\ref{absolute-retracts} can be rephrased in the following way:

\begin{lemma}\label{Helly<->bar1}
  A graph $G$ is a bipartite Helly graph if and only if $G$ is modular
  and for any vertex $u$, ${\mathcal H}_u$ is a Helly hypergraph.
\end{lemma}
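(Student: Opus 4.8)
The plan is to combine Theorem~\ref{absolute-retracts}, which states that $G$ is a bipartite Helly graph if and only if $G$ is modular and of breadth at most two, with a purely metric translation between intersections of intervals $I(u,v)$ and intersections of the hyperedges $R_v=N(u)\cap I(u,v)$ of $\mathcal{H}_u$. Since modularity appears on both sides of the desired equivalence, it suffices to prove that, for any graph $G$ and any vertex $u$, the breadth-at-most-two condition holds at $u$ if and only if $\mathcal{H}_u$ is a Helly hypergraph.

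The key lemma I would establish first is the following neighbor-extension property, valid in every graph: for any vertex $u$ and any set $W\subseteq V$,
\[
  \bigcap_{v\in W} I(u,v)=\{u\} \iff \bigcap_{v\in W} R_v=\varnothing .
\]
The nontrivial direction uses only the triangle inequality. Suppose $w\neq u$ lies in $\bigcap_{v\in W} I(u,v)$, and pick a neighbor $a$ of $u$ on a shortest $(u,w)$-path, so that $a\in I(u,w)$ and $d(a,w)=d(u,w)-1$. For every $v\in W$ we have $w\in I(u,v)$, whence $d(a,v)\le d(a,w)+d(w,v)=(d(u,w)-1)+(d(u,v)-d(u,w))=d(u,v)-1$; since also $d(a,v)\ge d(u,v)-d(u,a)=d(u,v)-1$, we get $a\in I(u,v)$, i.e. $a\in R_v$. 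Thus $a\in\bigcap_{v\in W} R_v$. The converse is immediate, since any vertex of $\bigcap_{v\in W}R_v$ is a neighbor of $u$ lying in every $I(u,v)$, hence a member of $\bigcap_{v\in W}I(u,v)$ distinct from $u$. Applied to two-element sets $W=\{v',v''\}$, this also yields $I(u,v')\cap I(u,v'')=\{u\}$ if and only if $R_{v'}\cap R_{v''}=\varnothing$.

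With this translation in hand, the equivalence becomes a restatement. Breadth at most two at $u$ asserts that whenever $\bigcap_{v\in W}I(u,v)=\{u\}$ there exist $w',w''\in W$ with $I(u,w')\cap I(u,w'')=\{u\}$; by the lemma this is the contrapositive of the implication ``if $\{R_v:v\in W\}$ is pairwise intersecting, then $\bigcap_{v\in W}R_v\neq\varnothing$'', which is exactly the Helly property for the subfamily of $\mathcal{H}_u$ indexed by $W$. As the hyperedges of $\mathcal{H}_u$ are precisely the sets $R_v$ and every subfamily is indexed by some $W\subseteq V$, breadth at most two holds at every $u$ if and only if $\mathcal{H}_u$ is Helly for every $u$. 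The degenerate cases (the empty hyperedge $R_u=N(u)\cap\{u\}=\varnothing$, or sets $W$ with fewer than two relevant vertices) are handled directly and contribute nothing to either condition. Combining with Theorem~\ref{absolute-retracts} then finishes the proof.

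The main obstacle is conceptual rather than technical: recognizing that the breadth-at-most-two condition is exactly the Helly property of $\mathcal{H}_u$ phrased through intervals. The neighbor-extension lemma that makes this precise is elementary, relying only on the triangle inequality, but it is the crucial bridge, since it shows that passing from the intervals $I(u,v)$ to their traces $R_v$ on $N(u)$ preserves both the pairwise and the global intersection patterns.
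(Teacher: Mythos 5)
Your proof is correct and follows exactly the route the paper intends: the paper gives no explicit proof, simply asserting that the lemma is a rephrasing of the breadth-at-most-two condition in Theorem~\ref{absolute-retracts}(\ref{th-bar-2}), and your neighbor-extension lemma (that $\bigcap_{v\in W} I(u,v)=\{u\}$ iff $\bigcap_{v\in W} R_v=\varnothing$) is precisely the missing justification for that rephrasing. Nothing to correct.
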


The next lemma allows us to construct bipartite Helly graphs from
Helly hypergraphs. With any hypergraph $\cH=(X,\cE)$ we associate the
following graph $R(\cH)$, which we call the \emph{incidence graph} of
$\cH$.  The vertex-set of $R(\cH)$ consists of the set $X$, a vertex
$u$ which is adjacent to all vertices of $X$, and a vertex $v_H$ for
each hyperedge $H\in \cE$ which is adjacent exactly to all vertices of
$X$ which belong to $H$. We call the vertices $x\in X$
$v$-\emph{vertices} and the vertices $v_H, H\in \cH$
$h$-\emph{vertices}.  Notice that $R(\cH)$ is a bipartite graph of
diameter at most 4.

\begin{lemma}\label{Helly<->bar2}
  A hypergraph $\cH=(X,\cE)$ is Helly if and only if its incidence
  graph $R({\mathcal H})$ is a bipartite Helly graph.
\end{lemma}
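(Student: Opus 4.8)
The plan is to prove both implications through Lemma~\ref{Helly<->bar1}, which reduces bipartite Hellyness of $R(\cH)$ to the conjunction of two conditions: $R(\cH)$ is modular, and for every vertex $w$ the neighborhood hypergraph $\cH_w$ is Helly. I first record the distances in $R(\cH)$, all immediate since $R(\cH)$ is bipartite of diameter at most $4$ and the apex $u$ is adjacent to all of $X$: two distinct $v$-vertices are at distance $2$; $d(u,v_H)=2$; a $v$-vertex $x$ and an $h$-vertex $v_H$ are at distance $1$ if $x\in H$ and $3$ otherwise; and two $h$-vertices $v_H,v_{H'}$ are at distance $2$ if $H\cap H'\neq\varnothing$ and $4$ otherwise. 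From these all the intervals $I(w,z)$ used below can be read off directly.

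For the ``only if'' direction, suppose $R(\cH)$ is a bipartite Helly graph, so that by Lemma~\ref{Helly<->bar1} the hypergraph $\cH_u$ is Helly. A short interval computation gives $R_{v_H}=N(u)\cap I(u,v_H)=H$ for every $H\in\cE$, while the remaining nonempty hyperedges of $\cH_u$ are the singletons $\{x\}$, $x\in X$. Hence $\cE$ is a subfamily of the hyperedges of $\cH_u$, and since the Helly property is inherited by subfamilies, $\cH$ is Helly.

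For the ``if'' direction, assume $\cH$ is Helly; I must check that $R(\cH)$ is modular and that $\cH_w$ is Helly for each vertex $w$. I compute $\cH_w$ in the three cases $w=u$, $w=x$ a $v$-vertex, and $w=v_H$ an $h$-vertex. The case $w=u$ is handled as above (adjoining singletons to $\cE$ preserves Hellyness). For $w=x$ every nonempty hyperedge of $\cH_x$ either contains $u$ or is a singleton $\{v_H\}$ with $x\in H$, so a pairwise-intersecting family always has a common element (either $u$, or the unique $v_H$ when a singleton occurs); thus $\cH_x$ is Helly unconditionally. For $w=v_H$ the nonempty hyperedges are the singletons $\{x\}$ ($x\in H$), the whole ground set $H$, and the traces $H\cap H'$ for $h$-vertices $v_{H'}$ with $H\cap H'\neq\varnothing$; a pairwise-intersecting family containing no singleton consists of sets $H\cap H'_i$ (all contained in $H$), and since then the $H'_i$ pairwise intersect and each meets $H$, applying the Helly property of $\cH$ to $\{H\}\cup\{H'_i\}_i$ produces a point of $H\cap\bigcap_i H'_i$, i.e.\ a common element. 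This is the first place where Hellyness of $\cH$ is genuinely used.

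It remains to prove that $R(\cH)$ is modular; since $R(\cH)$ is bipartite it suffices to verify the quadrangle condition QC. I split into cases according to the types of the two vertices $a,b$ with $d(a,b)=2$, of their common neighbor $c$, and of the apex $w_0$; the distance list above reduces each case to a short verification. In every case but one, the required common neighbor of $a,b$ strictly closer to $w_0$ is supplied by the universal vertex $u$ or by an explicitly available $v$- or $h$-vertex. The single case needing the hypothesis is $a=v_H$, $b=v_{H'}$, $c=x\in H\cap H'$, with apex $w_0=v_{H_0}$ an $h$-vertex satisfying $d(w_0,v_H)=d(w_0,v_{H'})=2$: here the common neighbors of $a$ and $b$ are exactly the $v$-vertices in $H\cap H'$, and one that is closer to $w_0$ must lie in $H\cap H'\cap H_0$ (such a vertex is automatically distinct from $x$ since $d(w_0,x)=3$ forces $x\notin H_0$). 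As $H,H',H_0$ pairwise intersect, the Helly property of $\cH$ furnishes such a vertex. Thus $R(\cH)$ is modular, and by Lemma~\ref{Helly<->bar1} it is a bipartite Helly graph. The main obstacle is the bookkeeping of intervals in $R(\cH)$ together with pinning down that Hellyness enters only through triples of pairwise-intersecting hyperedges, both in the Helly property of $\cH_{v_H}$ and in the sole nontrivial instance of the quadrangle condition.
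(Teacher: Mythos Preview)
Your proof is correct. Both you and the paper handle the ``only if'' direction via Lemma~\ref{Helly<->bar1}, observing that the hyperedges of $\cH$ appear among those of $\cH_u$.

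For the ``if'' direction, however, the two approaches diverge. The paper does not go through Lemma~\ref{Helly<->bar1}; instead it verifies condition~(\ref{th-bar-4}) of Theorem~\ref{absolute-retracts}, the interval condition: for every pair at distance $\ge 3$ in $R(\cH)$, the neighbors of one endpoint inside the interval have a second common neighbor there. In $R(\cH)$ the only such pairs are $(v_H,v_{H'})$ with $H\cap H'=\varnothing$ and $(x,v_H)$ with $x\notin H$, and the apex $u$ supplies the second common neighbor everywhere except for the neighbors of $x$ in $I(x,v_H)$; there the Helly property of $\cH$, applied to $H$ together with all $H'\ni x$ meeting $H$, produces the needed vertex. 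This is essentially the same triple-intersection phenomenon you isolate in your quadrangle-condition case $a=v_H$, $b=v_{H'}$, $w_0=v_{H_0}$, but reached more directly. Your route has the virtue of symmetry (one lemma for both directions) and makes explicit that Hellyness enters twice, once in $\cH_{v_H}$ and once in QC; the paper's route is shorter, since the interval condition collapses modularity and the breadth-$2$ condition into a single check and spares you the separate analyses of $\cH_u$, $\cH_x$, $\cH_{v_H}$ and of the QC cases.
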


\begin{proof}
  One direction follows from Lemma~\ref{Helly<->bar1}.  Conversely,
  suppose that $\cH$ is Helly.  Notice that for any two hyperedges
  $H,H'$, $d(v_H,v_{H'})=4$ if $H\cap H'=\varnothing$ and
  $d(v_H,v_{H'})=2$ if $H\cap H'=\varnothing$.  Notice also that
  $d(x,x')=2$ for any $x,x'\in X$ and $d(u,v_H)=2, d(x,v_H)\le 3$ for
  any $H\in \cE$ and $x\in X$.  We assert that $R(\cH)$ satisfies the
  condition~\ref{th-bar-4} of Theorem~\ref{absolute-retracts}.  We
  have to consider two cases. First, consider two vertices
  $v_H,v_{H'}$ with $d(v_H,v_{H'})=4$. Then $H$ and $H'$ are disjoint
  and the neighbors of $v_H$ in $I(v_H,v_{H'})$ are exactly the
  vertices of $H$. They have $u$ as the second common neighbor in
  $I(v_H,v_{H'})$. Second, consider two vertices $x\in X$ and $v_H$
  with $d(x,v_H)=3$. This implies that $x\notin H$. Then again the
  neighbors of $v_H$ in $I(v_H,x)$ have $u$ as the second common
  neighbor in $I(v_H,x)$. On the other hand, the neighbors of $x$ in
  $I(x,v_H)$ are the vertex $u$ and all the vertices $v_{H'}$ such
  that $H'\cap H\ne\varnothing$.  Since $x\in H'\cap H''$ for any two
  such vertices $v_{H'},v_{H''}$, applying the Helly property to $H$
  and all hyperedges $H'$ such that $v_{H'}\in I(x,v_H)$ we find a
  common point $y\in X$.  In $R(\cH)$, $y$ is adjacent to $v_H$, all
  $v_{H'}\in I(x,v_H)$, and to $u$. This shows that $R(\cH)$ is a
  bipartite Helly graph.
\end{proof}

There is a simple way to construct Helly hypergraphs from graphs via
hypergraph duality. The \emph{dual} of a hypergraph $\cH=(X,\cE)$ is
the hypergraph $\cH^*=(X^*,\cE^*)$ whose vertex-set $X^*$ is in
bijection with the edge-set $\cE$ of $\cH$ and whose edge-set $\cE^*$
is in bijection with the vertex-set $X$, namely $\cE^*$ consists of
all $S_x=\left\{ H_j\in \cE: x\in H_j\right\}, x\in X$. By definition,
${(\cH^*)}^*=\cH$. The \emph{clique hypergraph} of a graph $B=(V,E)$
is the hypergraph ${\mathcal C}(B)$ whose vertices are the vertices of
$B$ and whose hyperedges are the maximal cliques of $B$. The following
is a standard fact from hypergraph theory:

\begin{lemma}[\cite{Berge}]\label{dual}
  The dual ${({\mathcal C}(B))}^*$ of the clique hypergraph of any graph
  $B$ is a Helly hypergraph.
\end{lemma}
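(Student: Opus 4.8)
The plan is to unravel the definition of the dual hypergraph and reduce the Helly property to the elementary fact that every clique of $B$ extends to a maximal clique. Writing $\cH = {\mathcal C}(B)$, its ground-set is $V$ and its hyperedges are the maximal cliques of $B$; hence, by the definition of the dual, the vertex-set of $\cH^* = ({\mathcal C}(B))^*$ is the set of maximal cliques of $B$, and for each vertex $x \in V$ the corresponding hyperedge is $S_x = \{C : C \text{ is a maximal clique of } B \text{ with } x \in C\}$.

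First I would characterize when two hyperedges of $\cH^*$ intersect. For $x,y \in V$, the set $S_x \cap S_y$ is nonempty precisely when some maximal clique of $B$ contains both $x$ and $y$, which happens if and only if $\{x,y\}$ is a clique of $B$, i.e. $x=y$ or $xy \in E$. Consequently, a subfamily $\{S_x : x \in W\}$ for some $W \subseteq V$ is pairwise intersecting if and only if the vertices of $W$ are pairwise adjacent, i.e. $W$ is a clique of $B$.

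The second step verifies the Helly property itself. Let $\{S_x : x \in W\}$ be a pairwise intersecting subfamily; by the previous step $W$ is a clique of $B$. Every clique of $B$ is contained in some maximal clique $C$ (immediate for finite $B$, and a consequence of Zorn's lemma in general). Then $x \in C$ for all $x \in W$, so $C \in S_x$ for every $x \in W$, giving $C \in \bigcap_{x \in W} S_x$. Hence the intersection is nonempty and $\cH^*$ is Helly.

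There is no genuine obstacle here: the only point requiring minor care is the extension of a clique to a maximal clique, which is precisely where the identification of the maximal cliques of $B$ with the hyperedges of ${\mathcal C}(B)$ is used. Notably, the argument relies on no metric, modularity, or finiteness hypothesis on $B$, so it applies to an arbitrary graph as claimed.
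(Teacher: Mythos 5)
Your proof is correct and complete. The paper does not actually prove this lemma --- it is cited as a standard fact from Berge --- but your argument is precisely the standard one: the clique hypergraph $\mathcal{C}(B)$ is conformal (every clique of $B$ extends to a maximal clique, i.e.\ to a hyperedge), and conformality of a hypergraph is exactly the Helly property of its dual, which is what your two steps (pairwise intersecting hyperedges $S_x$ correspond to a clique $W$ of $B$; a maximal clique containing $W$ is a common element of all the $S_x$) establish directly.
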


\begin{figure}[h]\centering
  {\includegraphics[scale=0.8]{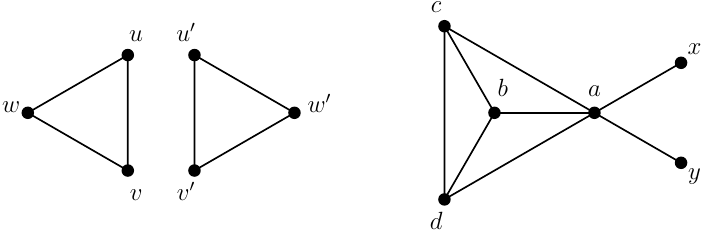}}
  \caption{The graphs from the proof of
    Proposition~\ref{bar->notpairing}}\label{fig-cex-PP}
\end{figure}

\begin{proposition}\label{bar->notpairing}
  There exist bipartite Helly graphs which do not satisfy the pairing
  property (respectively, the double-pairing property).
\end{proposition}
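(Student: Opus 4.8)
The plan is to realize the desired graph as the incidence graph $R(\cH)$ of a Helly hypergraph and then exhibit a single profile for which the weak duality of Lemma~\ref{pairing1} is \emph{strict}. I would start from the base graph $B=K_{3,3}$, form the Helly hypergraph $\cH=(\cC(B))^*$ (Helly by Lemma~\ref{dual}), and set $G=R(\cH)$, which is bipartite Helly by Lemma~\ref{Helly<->bar2}. Writing the bipartition of $K_{3,3}$ as $\{r_1,r_2,r_3\}\cup\{c_1,c_2,c_3\}$, the maximal cliques of $B$ are its nine edges $r_ic_j$; hence in $G$ the $v$-vertices are these nine edges, the $h$-vertices are the six stars $S_{r_i},S_{c_j}$ (with $S_x$ the set of edges through $x$), and the apex $u$ is adjacent to all nine $v$-vertices. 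The distance facts I would record are: $d(u,v_H)=2$ for every $h$-vertex, and for two $h$-vertices $d(v_H,v_{H'})=2$ when $H\cap H'\neq\varnothing$ and $d(v_H,v_{H'})=4$ when $H\cap H'=\varnothing$.

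For the pairing property I would take $\pi=(v_{S_{r_1}},v_{S_{r_2}},v_{S_{r_3}},v_{S_{c_1}},v_{S_{c_2}},v_{S_{c_3}})$. First I would verify that $u$ is the unique median: $F_{\pi}(u)=6\cdot 2=12$, each $v$-vertex $r_ic_j$ is adjacent to exactly the two stars through it and at distance $3$ from the other four, so $F_{\pi}(r_ic_j)=2\cdot 1+4\cdot 3=14$, and each star in $\pi$ has two same-part partners at distance $4$ and three opposite-part partners at distance $2$, so $F_{\pi}=2\cdot 4+3\cdot 2=14$. Thus $\Med(\pi)=\{u\}$ and $\min_v F_{\pi}(v)=12$. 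Next I would bound the pairings: a pair contributes $4$ iff its two stars are disjoint (same part of $K_{3,3}$) and $2$ iff they intersect (opposite parts), so the distance-$4$ pairs of any pairing form a matching of the graph $D$ on the six stars whose edges are the disjoint pairs. Since same-part stars are pairwise disjoint and opposite-part stars intersect, $D$ is a disjoint union of two triangles (rows and columns), with maximum matching $2$. Hence every pairing uses at most two distance-$4$ pairs and at least one distance-$2$ pair, giving $D_{\pi}(P)\le 4+4+2=10$ (attained by pairing $\{r_1,r_2\},\{c_1,c_2\},\{r_3,c_3\}$). By Lemma~\ref{pairing1}, $\max_P D_{\pi}(P)=10<12=\min_v F_{\pi}(v)$, so no $P,v$ satisfy $D_{\pi}(P)=F_{\pi}(v)$; thus $\pi$ has no perfect pairing and the bipartite Helly graph $G$ fails the pairing property.

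For the double-pairing property the same profile does not suffice, since in $\pi^2$ each star is doubled and the disjointness graph becomes two copies of the octahedron $K_3[\overline{K_2}]$, each of which \emph{has} a perfect matching, so $\pi^2$ pairs perfectly through $u$. I would therefore replace $K_{3,3}$ by a base graph whose chosen profile has a disjointness graph with no \emph{fractional} perfect matching, so that the obstruction survives doubling; by Lemmas~\ref{lem-integral-pm} and~\ref{lem-integral-k} a perfect pairing of $\pi^2$ is equivalent to a fractional perfect $\pi$-matching in some auxiliary graph $A_m$ with $m$ a median, and the task is to arrange that no median admits one. The verification is then the same $\max_P D_{\pi}<\min_v F_{\pi}$ gap computation.

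The main obstacle throughout is the pervasive centrality of the apex $u$: it lies on a geodesic between any two disjoint-hyperedge $h$-vertices, so it rescues every pairing built from distance-$4$ pairs, and—more insidiously—the Helly property forces every pairwise-intersecting family of profile hyperedges to share a common ground vertex, which is exactly what defeats the naive three-pairwise-close ($Q_3$-type) obstruction. The real work is to choose $B$ so that the maximum pairing is forced onto a distance-$2$ pair (for double-pairing, a fractional pair) while the forced common vertices remain strictly worse medians than $u$, thereby opening the gap; the $K_{3,3}$ choice achieves this precisely by making the disjointness graph two \emph{odd} triangles.
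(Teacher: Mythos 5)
Your construction for the pairing property is correct and is in fact the same example as the paper's: the paper takes a graph $C$ equal to the disjoint union of two triangles and sets $B=\overline{C}$, which is exactly your $K_{3,3}$, then forms $R((\cC(B))^*)$ and uses the profile of all six $h$-vertices. Your verification is slightly more hands-on (you compute $\max_P D_\pi(P)=10<12=\min_v F_\pi(v)$ directly by analyzing matchings in the disjointness graph, i.e.\ in $\overline{B}$), whereas the paper phrases the same computation as the general equivalence ``$\pi$ has a perfect pairing iff $\overline{B}$ has a perfect matching'' after checking that the apex $u$ is a median; the two arguments are interchangeable, and your distance computations and the bound $D_\pi(P)\le 10$ are all correct.

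The genuine gap is in the second half. The proposition also asserts the existence of a bipartite Helly graph failing the \emph{double}-pairing property, and for that your proof stops at a plan: you correctly observe that $K_{3,3}$ does not work (the doubled profile pairs perfectly because each octahedron $K_{2,2,2}$ has a perfect matching), you correctly identify that one must choose the base graph so that the disjointness graph admits no \emph{fractional} perfect matching while $u$ stays a median, but you never exhibit such a graph. That exhibition is the actual content of this half of the proposition, and it is not automatic: the disjointness graph $C=\overline{B}$ must simultaneously have no fractional perfect matching, no isolated vertex (so that no single $h$-vertex beats $u$), and independence number at most $m=\tfrac12|V(C)|$ (so that no $v$-vertex beats $u$). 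The paper resolves this by taking $C$ to be $K_4$ on $\{a,b,c,d\}$ together with two additional vertices $x,y$ adjacent only to $a$: the stable set $S=\{x,y\}$ has $N(S)=\{a\}$, violating the fractional Hall condition, while $C$ has minimum degree $1$ and independence number $3=m$. Without such a concrete witness (and the check that it meets all three constraints), the double-pairing claim remains unproved in your write-up.
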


\begin{proof}
  We construct a bipartite Helly graph not satisfying the pairing
  property that has the form $R:=R({({\mathcal C}(B))}^*)$ for a
  specially chosen graph $B$. That $R$ is bipartite Helly follows from
  Lemmas~\ref{Helly<->bar2} and~\ref{dual}. By definition of the dual
  hypergraph, the vertices of ${({\mathcal C}(B))}^*$ are the maximal
  cliques of $B$ and the hyperedges of ${({\mathcal C}(B))}^*$ are in
  bijection with the vertices of $B$. For each vertex $b$ of $B$ we
  denote by $H(b)$ the corresponding hyperedge of
  ${({\mathcal C}(B))}^*$; $H(b)$ consists of all maximal cliques of $B$
  containing $b$. Therefore the $h$-vertices of $R$ are in bijection
  with the vertices of $B$: $v_{H(b)}\leftrightarrow b$. Notice also
  that for any distinct $b,b' \in V(B)$, $d(v_{H(b)},v_{H(b')}) = 2$
  if and only if $b \sim b'$.

  As $B$ we consider a graph with an even number $2m$ of vertices, we
  consider the even profile on $B$ of size $2m$
  $\pi=(v_{H(b)}: b\in V(B))$, and we consider the double-profile
  $\tau = \pi^2$.  We want to ensure that the central vertex $u$ of
  $R$ is in $\Med(\pi) = \Med(\tau)$. Observe that $F_{\pi}(u) =
  4m$. Moreover, for any $v$-vertex $x$ in $R$, $x$ corresponds to a
  maximum clique $K$ in $B$ and the set of neighbors of $x$ in $R$ is
  precisely $K \cup \{u\}$. Consequently,
  $F_{\pi}(x) = |K| + 3(2m-|K|)$ and thus $F_{\pi}(u) \leq F_{\pi}(x)$
  if and only if $|K| \leq m$.  Thus we require that all maximal
  cliques of $B$ have size at most $m$; equivalently, all maximal
  stable sets of $\overline{B}$ must have size at most $m$. For any
  $h$-vertex $v_{H(b)}$,
  $F_{\pi}(v_{H(b)}) = 2k + 4(2m-1-k) = 8m -2k -4$ where
  $k = |N_B(b)| \leq 2m-1$. Since we want to have
  $4m = F_{\pi}(u) \leq F_{\pi}(v_{H(b)}) = 8m-2k-4$, i.e.,
  $k \leq 2m-2$, we require that every vertex of $B$ is not adjacent
  to at least one vertex of $B$; equivalently, the minimum degree of
  $\overline{B}$ must be at least $1$.

  If the profile $\pi$ (respectively, $\tau$) of $R$ would satisfy the
  pairing property, then we can find a pairing $P$ of $\pi$
  (respectively, of $\tau$) such that for any pair
  $\left\{ v_{H(b)},v_{H(b')}\right\}\in P$ we must have
  $u\in I(v_{H(b)},v_{H(b')})$. As we noticed above, this is
  equivalent to require that $H(b)\cap H(b')=\varnothing$, which is
  obviously equivalent to the requirement that the vertices $b$ and
  $b'$ are not adjacent in $B$. Therefore, the profile $\pi$
  (respectively, $\tau$) of $R$ admits a perfect pairing if and only
  if the complement $\overline{B}$ of $B$ has a perfect matching
  (respectively, a $\tau$-perfect matching). Since $\tau = \pi^2$, the
  weight of each node of $B$ in $\tau$ is 2, and thus by
  Lemma~\ref{lem-integral-pm} $B$ admits a $\tau$-perfect matching if
  and only if $B$ admits a fractional perfect matching.

  Therefore, we have to construct a graph $C$ with $2m$ vertices with
  no isolated vertices in which the maximum stable set has size at
  most $m$ and that does not have a perfect matching (respectively, a
  fractional perfect matching). Then, we can take $B = \overline{C}$
  and $R({(\cC(B))}^*)$ will be a bipartite Helly graph that does not
  satisfy the pairing property (respectively, the double-pairing
  property). For the pairing property, we can take the disjoint union
  of two triangles as $C$ (see Figure~\ref{fig-cex-PP}, left). In this
  graph, there is no isolated vertex, any maximum stable set is of
  size 2, and clearly it does not contain a perfect matching.  For the
  double-pairing property, we can take as $C$ the graph on $6$
  vertices obtained by considering a complete graph on four vertices
  $a,b,c,d$ in which we added two nodes $x, y$ that are only adjacent
  to $a$ (see Figure~\ref{fig-cex-PP}, right). In this graph, there is
  no isolated vertex, any stable set is of size at most $3$, and it
  does not contain a fractional perfect matching. Indeed, the stable
  set $S=\{x,y\}$ is of size $2$, but $N(S) = \{a\}$ is of size $1$,
  preventing $C$ from having a fractional perfect matching.
\end{proof}

\subsection{A local-to-global characterization of graphs with the
  pairing property}
In this subsection, we provide a local-to-global characterization of
bipartite Helly graphs with the pairing property.
Note that balls in bipartite Helly graphs induce bipartite Helly
graphs:

\begin{lemma}\label{lem-boules-Helly}
  If $G$ is a bipartite Helly graph, then for every vertex $u$ and
  every integer $k$, the subgraph of $G$ induced by the ball $B_k(u)$
  is an isometric subgraph of $G$, which is a bipartite Helly graph.
\end{lemma}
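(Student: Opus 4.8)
The plan is to prove the two assertions separately, namely that $B_k(u)$ is isometric in $G$ and that the induced subgraph is itself a bipartite Helly graph. For the isometry, I would first recall that bipartite Helly graphs are modular (stated in the excerpt), and modular graphs are in particular weakly modular, hence meshed and satisfying the triangle condition. A standard fact for such graphs is that balls are convex: I would show that if $x,y \in B_k(u)$ then every vertex on a shortest $(x,y)$-path lies in $B_k(u)$. The cleanest route is to use that $F_{\pi}$ is pseudopeakless (Theorem~\ref{th-cmed-p} with $p=1$, applied to the one-point profile $\pi=(u)$, so $F_{\pi}=d(u,\cdot)$), which gives that along any geodesic the function $d(u,\cdot)$ is peakless; since its values at the endpoints are $\le k$, all intermediate values are $\le k$. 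Convexity of balls immediately yields that $B_k(u)$ is isometric, since a shortest $(x,y)$-path in $G$ stays inside the ball and therefore is also a path in the induced subgraph.

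For the second assertion, I would use the characterization of bipartite Helly graphs in Lemma~\ref{Helly<->bar1}: a graph is bipartite Helly if and only if it is modular and, for every vertex $w$, the hypergraph $\mathcal{H}_w$ (whose hyperedges are the sets $R_v = N(w)\cap I(w,v)$) is a Helly hypergraph. Since $B_k(u)$ is isometric in $G$, intervals and neighborhoods computed inside $B_k(u)$ coincide with those in $G$ for vertices of $B_k(u)$; I would verify this carefully so that the local structure is inherited. Modularity of the induced subgraph then follows from convexity: the quadrangle condition (and bipartiteness) for vertices inside a convex set is inherited, because the witnessing common neighbor $x$ produced in $G$ satisfies $d(u,x)\le k$ and hence lies in $B_k(u)$.

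It remains to check the Helly condition on each hypergraph $\mathcal{H}_w$ for $w\in B_k(u)$. The key point is that the hyperedges $R_v=N(w)\cap I(w,v)$ arising inside $B_k(u)$ form a subfamily of the hyperedges of the corresponding hypergraph for $G$: every vertex $v\in B_k(u)$ is also a vertex of $G$, giving the same set $R_v$ by isometry. A pairwise intersecting subfamily of these therefore has a common point in $G$ by the Helly property of $\mathcal{H}_w$ in $G$; I would then argue that this common neighbor $y\in N(w)$ can be taken inside $B_k(u)$, since $y$ lies on a geodesic from $w$ to some vertex of $B_k(u)$ and balls are convex.

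I expect the main obstacle to be the last step: ensuring that the common point witnessing the Helly property, which a priori is only guaranteed to exist in $G$, actually lies in the ball $B_k(u)$ and is a neighbor of $w$ \emph{within} the induced subgraph. The care needed is to exploit isometry and convexity to pull the global Helly witness back into $B_k(u)$; if $y\in N_G(w)\cap I_G(w,v)$ for some $v\in B_k(u)$, then $y$ lies on a geodesic between $w\in B_k(u)$ and $v\in B_k(u)$, so convexity of the ball forces $y\in B_k(u)$, and isometry guarantees $y\sim w$ in the induced subgraph as well.
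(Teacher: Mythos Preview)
Your plan has a genuine gap: balls in bipartite Helly graphs are \emph{not} convex, so the argument collapses at its first step. Take $G=K_{2,3}$ with parts $\{a_1,a_2\}$ and $\{b_1,b_2,b_3\}$; this is bipartite Helly (it is modular and condition~(\ref{th-bar-4}) of Theorem~\ref{absolute-retracts} is vacuous since the diameter is $2$). Then $B_1(b_1)=\{b_1,a_1,a_2\}$, but $b_2,b_3\in I(a_1,a_2)\setminus B_1(b_1)$, so the ball is not convex. Equivalently, along the geodesic $(a_1,b_2,a_2)$ the function $d(b_1,\cdot)$ takes values $1,2,1$, hence is not peakless. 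Your invocation of Theorem~\ref{th-cmed-p} with $p=1$ is also illegitimate: that equivalence says $F_\pi$ is pseudopeakless for \emph{all} $\pi$ exactly when $G$ has connected medians, whereas bipartite Helly graphs are only known to have $G^2$-connected medians. Since every subsequent step of your outline (isometry, inherited modularity via the quadrangle condition, and pulling the Helly witness back into the ball) rests on convexity, none of them goes through.

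The paper proceeds differently and avoids convexity altogether. For isometry, given $x,y\in B_k(u)$ it takes a median $z$ of the triplet $x,y,u$ (which exists since $G$ is modular); then $z\in I(u,x)\cap I(u,y)$ forces $z\in B_k(u)$ and in fact $I(x,z)\cup I(z,y)\subseteq B_k(u)$, which yields a shortest $(x,y)$-path inside the ball. For the Helly part, the paper does not pass through Lemma~\ref{Helly<->bar1} but works directly with half-balls: to a family of pairwise intersecting half-balls of $H=G[B_k(u)]$ it adjoins the half-ball $\frac{1}{2}B_k(u)$ of $G$, checks pairwise intersection in $G$, applies the global Helly property, and observes that the resulting common point lies in $B_k(u)$ by construction. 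If you want to repair your approach, the fix for isometry is precisely this median-of-$(x,y,u)$ trick; for the second part, either imitate the half-ball argument or, if you stick with $\mathcal{H}_w$, you must find a different reason why the common neighbor can be chosen inside $B_k(u)$, since convexity is unavailable.
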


\begin{proof}
  Let $H=G[B_k(u)]$. First notice that $H$ is an isometric subgraph of
  $G$. Indeed, pick any $x,y\in B_k(u)$. Since $G$ is modular, the
  triplet $x,y,u$ has a median vertex $z$.  Since $x,y\in B_k(u)$ and
  $z\in I(u,x)\cap I(u,y)$, the vertex $z$ and the intervals $I(x,z)$
  and $I(z,y)$ belong to the ball $B_k(u)$. Since $z\in I(x,y)$ and
  $I(x,z)\cup I(z,y)\subset B_k(u)$, $x$ and $y$ can be connected in
  $H$ by a shortest path of $G$, thus $H$ is an isometric subgraph of
  $G$.

  Now, we prove that $H$ is a bipartite Helly graph. Let
  $\frac{1}{2}B'_{r_i}(x_i), i=1,\ldots,n$ be a collection of pairwise
  intersecting half-balls of $H$.  Denote by
  $\frac{1}{2}B_{r_i}(x_i), i=1,\ldots,n$ the respective half-balls of
  $G$. Suppose without loss of generality that
  $\frac{1}{2}B_{r_i}(x_i)=B_{r_i}(x_i)\cap X$, where $X$ and $Y$ are
  the color classes of $G$.  Let $\frac{1}{2}B_r(u)=B_r(u)\cap X$.  We
  assert that for any $i=1,\ldots,n$, we have
  $\frac{1}{2}B_{r_i}(x_i)\cap \frac{1}{2}B_r(u)\ne
  \varnothing$. Indeed, if $x_i$ belongs to $X$, then
  $x_i\in \frac{1}{2}B_{r_i}(x_i)$ and since $x_i\in B_{r}(x)$ we also
  have $x_i\in \frac{1}{2}B_{r}(u)$. Otherwise, if $x_i\in Y$, then
  $r_i\ge 1$ and any neighbor $z$ of $x_i$ in $I(x_i,u)$ belongs to
  $\frac{1}{2}B_{r_i}(x_i)$ and to $B_r(u)$ and thus to
  $\frac{1}{2}B_r(u)$. Consequently,
  $\frac{1}{2}B_r(u),\frac{1}{2}B_{r_1}(x_1),\ldots,\frac{1}{2}B_{r_n}(x_n)$
  is a collection of pairwise intersecting half-balls of $G$. By the
  Helly property, these half-balls contain a common vertex $y$. Since
  $y\in \frac{1}{2}B_r(u)\subset B_r(u)$, $y$ is a common vertex of
  the half-balls
  $\frac{1}{2}B'_{r_1}(x_1),\ldots,\frac{1}{2}B'_{r_n}(x_n)$ of $H$.
\end{proof}

For a vertex $u$ of a bipartite Helly graph we define a graph $B_u$ in
the following way: the vertices of $B_u$ is the set $B_2(u)$ of all
vertices at distance at most 2 from $u$ and two such vertices $v,v'$
are adjacent in $B_u$ if and only if $u \in I(v,v')$.  We call $B_u$
the \emph{local graph} of the vertex $u$.

We say that a graph $G=(V,E)$ satisfies the \emph{matching-stable-set
  property} (respectively, \emph{double-matching-stable-set property})
if for any profile $\pi$ (respectively, any double-profile
$\pi = \tau^2$) on $G$,
\begin{enumerate}[(1)]
\item either there exists a vertex $z$ of $G$ such that
  $\pi(z) > \pi(N_{G}(z))$,
\item or there exists a maximal stable set $S$ such that
  $\pi(S) > \pi(N_{G}(S))$,
\item or there exists a perfect $\pi$-matching in $G$.
\end{enumerate}
Observe that for a maximal stable set $S$, we have
$S \cup N_{G}(S) = V$, thus condition (2) can be restated by requiring
that there exists a (maximal) stable set $S$ such that
$\pi(S) > \frac{1}{2}\pi(V)$.

Given a graph $G=(V,E)$ and a double-profile $\pi$ on $V$, by the
fractional Hall condition~\cite{Balinski,SchrijverA}, the graph $G$
admits a perfect $\pi$-matching if and only if for any stable set $S$
of $G$, we have $\pi(N_G(S)) \geq \pi(S)$. By this condition, for any
double-profile $\pi$ such that $G$ does not admit a perfect
$\pi$-matching, there exists a stable set $S$ such that
$\pi(N_G(S)) < \pi(S)$. Such a set is called a \emph{disabling stable
  set} for $\pi$. A family $\cD$ of stable sets of $G$ is called
\emph{disabling} for $G$ if for any double-profile $\pi$, either $G$
admits a perfect $\pi$-matching, or there exists $S \in \cD$ disabling
$\pi$.  Then, a graph $G$ satisfies the double-matching-stable-set
property if and only if the family of all maximal sets and all 1-vertex sets (i.e., singletons) of $G$ is a disabling family.

\begin{proposition}\label{prop-gen-matching-stable-set}
  A bipartite Helly graph $G$ satisfies the pairing property
  (respectively, the double-pairing property) if and only if all local
  graphs $B_u, u\in V$ satisfy the matching-stable-set property
  (respectively, the double-matching-stable-set property).
\end{proposition}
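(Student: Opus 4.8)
The plan is to prove a local-to-global equivalence connecting the (double-)pairing property of a bipartite Helly graph $G$ to the (double-)matching-stable-set property of its local graphs $B_u$. I would first recall the reduction from the preliminary lemmas: deciding whether a profile $\pi$ admits a perfect pairing is equivalent to finding a vertex $u \in \Med(\pi)$ such that the auxiliary graph $A_u$ admits a perfect $\pi$-matching (Lemma~\ref{lem-integral-pm} and the surrounding discussion). The key observation tying this to $B_u$ is that, for any profile $\pi$, one can essentially restrict attention to a median vertex $u$, and that the relevant edges of $A_u$ involving vertices near $u$ are captured by the local graph $B_u$, whose vertex set is $B_2(u)$ and whose adjacency records exactly the relation $u \in I(v,v')$ for $2$-pairs.

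\medskip\noindent
\textbf{Forward direction.} First I would assume $G$ satisfies the (double-)pairing property and show that each $B_u$ satisfies the (double-)matching-stable-set property. Given a profile $\pi$ on $B_u$ (i.e.\ supported on $B_2(u)$), I would lift it to a profile on $G$ and use the pairing hypothesis to obtain a perfect pairing, hence by Lemma~\ref{pairing1} a vertex realizing the strong duality. The point is to argue that $u$ itself can be taken as this realizing vertex (using that the profile lives in $B_2(u)$ and that $u$ is forced into $\Med$ by the construction), so that the perfect $\pi$-matching on $A_u$ restricts to a perfect $\pi$-matching on $B_u$. When no perfect matching exists, the fractional Hall condition produces a disabling stable set $S$ with $\pi(N_{B_u}(S)) < \pi(S)$, which I would promote to a maximal or minimal stable set, yielding case (1) or (2) of the matching-stable-set property.

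\medskip\noindent
\textbf{Reverse direction.} Conversely, assuming all local graphs satisfy the (double-)matching-stable-set property, I would show $G$ has the (double-)pairing property. Take any (double-)profile $\pi$ on $G$ and a median vertex $u \in \Med(\pi)$. The edges of $A_u$ are precisely the $2$-pairs $(v,v')$ with $u \in I(v,v')$ together with pairs at larger distance; the crucial reduction is that a perfect $\pi$-matching of $A_u$ can be assembled from local information around $u$, so that the existence question reduces to a matching problem on $B_u$. If $B_u$ has a perfect $\pi$-matching, Lemma~\ref{lem-bMatchMed} (or Lemma~\ref{lem-integral-pm} for the double case) gives the desired perfect pairing. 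If not, the matching-stable-set property yields a dominated vertex or a stable set $S$ with $\pi(S) > \pi(N_{B_u}(S))$; I would then show this contradicts $u \in \Med(\pi)$, because such an imbalance would let one decrease $F_\pi$ by moving weight, violating optimality of $u$ and confirming that no obstruction can arise.

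\medskip\noindent
The main obstacle I expect is the faithful translation between matchings in the global auxiliary graph $A_u$ and matchings in the local graph $B_u$: I must verify that pairing vertices far from $u$ (at distance greater than $2$) never helps, i.e.\ that an optimal pairing through a median $u$ can always be chosen so that paired vertices lie in $B_2(u)$ or reduce to $2$-pairs, which is where bipartite Hellyness (via the interval and quadrangle conditions of Theorem~\ref{absolute-retracts} and the gatedness/modularity structure) does the real work. Establishing that the disabling stable sets transfer correctly between the fractional Hall conditions for $A_u$ and for $B_u$, and that the extremal (maximal/minimal) stable sets suffice as a disabling family, will be the delicate bookkeeping that makes both implications go through.
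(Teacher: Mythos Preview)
Your plan has the right overall architecture but contains a genuine gap in each direction.

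\textbf{Forward direction.} Your claim that ``$u$ is forced into $\Med$ by the construction'' is false: the profile $\pi$ on $B_2(u)$ is arbitrary, and $u$ need not be a median of it. The paper's argument is a case split on whether $u$ is a local minimum of $F_\pi$ in $G^2$. If some $z$ with $d(u,z)=2$ has $F_\pi(z)<F_\pi(u)$, a direct computation of $F_\pi(z)-F_\pi(u)$ yields $\pi(z)>\pi(N_{B_u}(z))$, i.e.\ condition~(1). If some neighbour $x$ of $u$ has $F_\pi(x)<F_\pi(u)$, one obtains condition~(2) with the stable set $S=\{x\}\cup(N_G(x)\cap B_2(u))$. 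Only when $u$ is a local $G^2$-minimum does the paper invoke the fact that bipartite Helly graphs have $G^2$-connected medians (Proposition~62 of \cite{GpConMed}) to conclude $u\in\Med(\pi)$ and then apply the pairing hypothesis to get condition~(3). Your alternative route---use Hall to produce \emph{some} disabling stable set and then ``promote it to a maximal or minimal stable set''---does not work: enlarging a disabling stable set to a maximal one can destroy the inequality $\pi(S)>\pi(N_{B_u}(S))$, and there is no reason a disabling set should shrink to a singleton. The paper never promotes; it identifies the correct stable sets directly from the sign of $F_\pi(z)-F_\pi(u)$.

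\textbf{Reverse direction.} You correctly flag the global-to-local translation as the crux, but your description (``paired vertices lie in $B_2(u)$ or reduce to $2$-pairs'') is not what happens. The paper replaces each $v$ in $\pi$ with $d(u,v)\ge 2$ by a specific proxy $z_v\in I(u,v)$ at distance $2$ from $u$, chosen via the interval condition of Theorem~\ref{absolute-retracts}\eqref{th-bar-4} so that $z_v$ is adjacent to every vertex of $I(u,v)\cap N_G(u)$. The heart of the argument is the equivalence $u\in I(v,v')\Leftrightarrow u\in I(z_v,z_{v'})$, whose nontrivial direction uses the Helly property on three balls (centred at $u$, $v$, $v'$). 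This is what lets a $\pi'$-matching in $B_u$ lift to a perfect pairing of $\pi$. Without this concrete construction and equivalence, the reduction you outline remains a wish rather than an argument. Likewise, ruling out conditions~(1) and~(2) when $u\in\Med(\pi')$ uses the special structure of $B_u$ (namely that $u$ is universal and $N_G(u)$ is a clique in $B_u$), which forces any disabling stable set to be of the form $\{x\}\cup N_G(x)$; you should make that explicit.
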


\begin{proof}
  First, let $G=(V,E)$ be a bipartite Helly graph satisfying the
  pairing property (respectively, the double-pairing property).
  Consider a vertex $u \in V$ and the corresponding local graph
  $B_u=(V_u,E_u)$ as well as an even profile (respectively, a double
  profile) $\pi$ on $V_u \subseteq V$. Note that $V_u$ is the set of
  vertices at distance at most $2$ from $u$ in $G$. Let $X=N_G(u)$ and
  $Z=\{z \in V_u: d_G(u,z)=2\}$.

  Suppose first that there exists $z \in Z$ such that
  $F_\pi(z) < F_\pi(u)$. Let $X_1 = X \cap N_G(z)$ and
  $X_2 = X\setminus X_1$. Let $Z_1 = \{z' \in Z : d_G(z,z') \leq 2\}$
  and $Z_2 = Z\setminus Z_1$. Observe that since $G$ is bipartite, for
  any $x \in X$, $xz \in E_u$ if and only if $x \in X_2$ and for any
  $z' \in Z$, $zz' \in E_u$ if and only if $z' \in Z_2$. Note also that
  $F_\pi(z) - F_\pi(u) = 2\pi(u) + 2\pi(X_2) + 2\pi(Z_2) -
  2\pi(z)$. Since $F_\pi(z) < F_\pi(u)$ and since
  $N_{B_u}(z) = \{u\}\cup X_2 \cup Z_2$, we have
  $\pi(N_{B_u}(z)) < \pi(z)$ and thus condition (1) holds.

  Suppose now that there exists $x \in X$ such that
  $F_\pi(x) < F_\pi(u)$. Let $Z_1 = Z \cap N_G(x)$ and
  $Z_2 = Z \setminus Z_1$. Observe that in $B_u$, $S = Z_1\cup \{x\}$
  is a stable set. Note that
  $F_\pi(x) - F_\pi(u) = \pi(u) - \pi(x) + \sum_{x' \in X\setminus
    \{x\}} \pi(x') - \sum_{z \in Z_1} \pi(z) + \sum_{z \in Z_2} \pi(z)
  = \pi(\{u\}\cup (X\setminus\{x\})\cup Z_2) - \pi(\{x\}\cup
  Z_1)$. Since $F_\pi(x) < F_\pi(u)$, we have
  $\pi(S) > \pi(V_u \setminus S) = \pi(N_{B_u}(S))$ and thus condition
  (2) holds. Thus, we can assume that $u$ is a local median of $\pi$
  in $G^2$. Since bipartite Helly graphs have $G^2$-connected
  medians~\cite[Proposition 62]{GpConMed}, $u \in \Med_G(\pi)$.  Since
  $G$ satisfies the pairing property (respectively, the double-pairing
  property), there exists a pairing $P$ of $\pi$ such that
  $\Med_G(\pi) = \bigcap_{\{a,b\}\in P}I(a,b)$. Hence for any
  $\{a,b\} \in P$, we have $u \in I(a,b)$ and thus $ab \in
  E_u$. Consequently, in this case, the pairing $P$ defines a
  perfect $\pi$-matching in $B_u$ and condition (3) holds.

  Assume now that for any $u \in V$, the local graph $B_u=(V_u,E_u)$
  satisfies the matching-stable-set property (respectively, the
  double-matching-stable-set property), and consider an even profile
  (respectively, a double-profile) $\pi$ on $G$. Consider a vertex
  $u \in \Med_G(\pi)$.  For each vertex $v \in \pi$ such that
  $d_G(u,v)\geq 2$, by
  Theorem~\ref{absolute-retracts}(\ref{th-bar-4}), there exists a
  vertex $z_v \in I(u,v)$ such that $d_G(u,z_v)=2$ and
  $z_v \sim I(u,v) \cap N_G(u)$.  We construct a profile $\pi'$ on
  $V_u = B_2(u,G)$ by replacing each occurrence of $v$ in $\pi$ with
  $d_G(u,v)\geq 2$ by such a $z_v$ (we keep in $\pi'$ each occurrence
  of $u$ in $\pi$ and each occurrence of $x\in N_G(u)$ in
  $\pi$). Observe that $\pi'$ is an even profile and that we can
  assume that $\pi'$ is a double-profile when $\pi$ is a double
  profile.  Note that if $u \in I(v,v')$, then $u \in
  I(z_v,z_v')$. Conversely, if $u \in I(z_v,z_v')$, then
  $u \in I(v,v')$. Indeed, since $G$ is bipartite, either
  $d_G(v,v') = d_G(v,u) + d_G(u,v')$, or
  $d_G(v,v') \leq d_G(v,u) + d_G(u,v') -2$. In the second case,
  consider the balls $B_1(u)$ of $G$ and the balls of radius
  $d_G(u,v)-1$ and $d_G(u,v')-1$ respectively centered at $v$ and
  $v'$. Since these three balls pairwise intersect, there exists a
  neighbor $x$ of $u$ such that $x \in I(u,v) \cap I(u,v')$. This
  implies that $x \sim z_v,z_{v'}$ by the definition of $z_v$ and
  $z_{v'}$, and thus $u \notin I(z_v,z_{v'})$.

  Since $u$ is a median of $\pi$ in $G$, $u$ is also a median of
  $\pi'$ in $G$. If $\pi'(u) \geq \pi'(V_u\setminus\{u\})$, then
  $\pi(u) \geq \pi(V\setminus \{u\})$ and in this case, there exists a
  pairing $P$ of $\pi$ such that for each $\{a,b\}\in P$, $a=u$ or
  $b=u$. In this case, $u\in I(a,b)$ for each $\{a,b\}\in P$, and thus
  $\Med_G(\pi) = \bigcap_{\{a,b\}\in P} I(a,b)$. Assume now that
  $\pi'(u) < \pi'(V\setminus \{u\})$.

  Suppose first that there exists a stable set $S$ of $B_u$ such that
  $\pi'(S) > \pi'(V_u\setminus S)$. Since $u$ is a universal vertex in
  $B_u$ and since $\pi'(u) < \pi'(V\setminus \{u\})$, necessarily
  $u \notin S$. Let $X = N_G(u)$ and $Z = \{z: d_G(u,z)=2\}$. Since
  for all distinct $x,x' \in X$, we have $xx' \in E_u$, we have
  $|X\cap S| \leq 1$. If $X\cap S = \{x\}$ and
  $\pi'(x) > \pi'(V_u\setminus \{x\})$, then
  $\pi(x) > \pi(V \setminus \{x\})$, and in this case,
  $\Med_G(\pi)= \{x\}$, contradicting the choice of $u$. Consequently,
  if there exists $x \in X\cap S$, then
  $S\setminus \{x\} \neq \varnothing$ and
  $S\setminus \{x\} \subseteq Z$. Moreover, if there exists
  $x \in X\cap S$, for each $z\in Z\cap S$, $d_G(x,z) = 1$ and thus
  $Z \cap X \subseteq N_G(x)$.  Since $S$ is a stable set in $B_u$,
  for all distinct $z,z' \in S \cap Z$, we have $d_G(z,z') =
  2$. Consequently, by considering all balls of radius $1$ centered at
  the vertices of $Z$, we get that there exists $x \in X$ such that
  $S \subseteq N_G(x)$. Therefore, in any case, there exists $x$ such
  that $S \subseteq \{x\}\cup N_G(x)$ and since $\{x\}\cup N_G(x)$ is
  a stable set of $B_u$, we can assume that $S= \{x\}\cup N_G(x)$. In
  this case,
  $F_{\pi'}(x) - F_{\pi'}(u) = \pi'(V_u\setminus S) - \pi'(S) < 0$,
  contradicting the fact that $u \in \Med_G(\pi')$.

  Suppose now that there exists $z \in V_u$ such that
  $\pi'(z)>\pi'(N_{B_u}(z))$. If $z \in X$, let
  $S = \{z\}\cup (N_G(z)\cap Z)$. Observe that $S$ is a stable set of
  $B_u$ and that $N_{B_u}(z) = V_u\setminus S$. Consequently, there
  exists a stable set $S$ of $B_u$ such that
  $\pi'(S) > \pi'(V_u\setminus S)$, but we already know that this is
  impossible. We can thus assume that $z \in Z$. Observe that for each
  $y \in V_u\setminus \{z\}$, either $d_G(u,y)=d_G(z,y)$, or
  $y \in N_{B_u}(z)$ and $d_G(z,y) = d_G(u,y)+2$. Consequently,
  $F_{\pi'}(z)-F_{\pi'}(u) = 2\pi'(N_{B_u}(z)) - 2\pi'(z) < 0$,
  contradicting the fact that $u \in \Med_{G}(\pi')$.

  Consequently, we can assume that there exists a $\pi'$-matching $M$
  in $B_u$.  From such a matching $M$, one can obtain a pairing $P$ of
  $\pi$ such that if $\{v,v'\} \in P$, then ${z_v}{z_{v'}} \in
  M$. Consequently, for each $\{v,v'\} \in P$, we have
  $u \in I(z_v,z_{v'})$ and $u \in I(v,v')$. Therefore we have
  $u \in \bigcap_{\{v,v'\}\in P} I(v,v') = \Med_G(\pi)$. This shows
  that $G$ satisfies the pairing property (respectively, the
  double-pairing property).
\end{proof}

The following result follows from Lemma~\ref{lem-boules-Helly} and the
proof of Proposition~\ref{prop-gen-matching-stable-set}:

\begin{corollary}\label{local-to-global}
  A bipartite Helly graph $G$ satisfies the pairing property
  (respectively, the double-pairing property) if and only if all
  bipartite Helly graphs induced by the balls of radius 2 of $G$
  satisfy the pairing property (respectively, the double-pairing
  property).
\end{corollary}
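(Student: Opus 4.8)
The plan is to combine Lemma~\ref{lem-boules-Helly}, which guarantees that each ball $B_2(a)$ induces an isometric bipartite Helly subgraph $H_a:=G[B_2(a)]$, with the local characterization of the pairing property from Proposition~\ref{prop-gen-matching-stable-set}. The key elementary observation is that the local graph of $G$ at a vertex $u$ coincides with the local graph of the ball $H_u$ at its \emph{own} center $u$: both have vertex set $B_2(u)$, and since $H_u$ is isometric in $G$, for $x,x'\in B_2(u)$ we have $u\in I_{H_u}(x,x')$ if and only if $u\in I_G(x,x')$, so the two adjacency relations agree. I write $B_u$ for this common local graph.

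For the implication ``all radius-$2$ balls satisfy the pairing property $\Rightarrow$ $G$ satisfies it'', I first apply Proposition~\ref{prop-gen-matching-stable-set} to each $H_u$ (a bipartite Helly graph by Lemma~\ref{lem-boules-Helly}): since $H_u$ satisfies the pairing property, all its local graphs, and in particular the one at its center, namely $B_u$, satisfy the matching-stable-set property. As this holds for every $u\in V$, Proposition~\ref{prop-gen-matching-stable-set} applied to $G$ yields that $G$ satisfies the pairing property. The double-pairing version is identical, using the double-matching-stable-set property throughout.

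For the converse, fix $a$ and an even profile (respectively, a double-profile) $\sigma$ supported on $V(H_a)=B_2(a)$. Since $G$ satisfies the pairing property, $\sigma$ admits a perfect pairing $P$ in $G$, and by Lemma~\ref{pairing1} every witness lies in $\bigcap_{\{x,y\}\in P}I_G(x,y)=\Med_G(\sigma)$. The aim is to produce a witness inside $B_2(a)$: if $w\in\Med_G(\sigma)\cap B_2(a)$, then by isometry $I_{H_a}(x,y)=I_G(x,y)\cap V(H_a)\ni w$ for each pair, so $D_\sigma(P)=F_{H_a,\sigma}(w)$ and $P$ is a perfect pairing of $\sigma$ in $H_a$; hence $H_a$ satisfies the pairing property. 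Everything therefore reduces to the claim that a profile supported on a ball of radius $2$ has a median inside that ball, i.e. $\Med_G(\sigma)\cap B_2(a)\neq\varnothing$.

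This claim is the main obstacle, and I would prove it as follows. By Lemma~\ref{lem-boules-Helly}, $H_a$ is bipartite Helly, so by~\cite[Proposition 62]{GpConMed} it has $G^2$-connected medians and $\Med_{H_a}(\sigma)\neq\varnothing$; since $F_{H_a,\sigma}=F_{G,\sigma}$ on $B_2(a)$ by isometry, it suffices to show that the minimum of $F_{G,\sigma}$ over $B_2(a)$ equals its global minimum. Suppose not, and pick a global median $v\in\Med_G(\sigma)$ minimizing $d(a,v)$, so $d(a,v)=t\geq 3$. Applying the interval condition of Theorem~\ref{absolute-retracts}(\ref{th-bar-4}) to the pair $a,v$, the neighbors of $v$ in $I(a,v)$ admit a second common neighbor $c\in I(a,v)$, so that $d(a,c)=t-2$ and $d(v,c)=2$. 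The heart of the argument is to verify, using that every support vertex $s$ satisfies $d(a,s)\leq 2$ together with bipartiteness (the parity of distances) and modularity (the median $m$ of $a,s,v$ lies on $I(a,v)$ within distance $2$ of $a$), that $d(c,s)\leq d(v,s)$ for every $s\in\Supp(\sigma)$; this gives $F_{G,\sigma}(c)\leq F_{G,\sigma}(v)$, so $c$ is also a global median but is strictly closer to $a$, contradicting the minimality of $t$. The delicate step is establishing $d(c,s)\leq d(v,s)$ for all $t\geq 3$: it is immediate for $t=3$ (a case analysis on $d(v,s)\in\{1,2,\geq 3\}$ using parity), and for larger $t$ I expect it to follow by routing a shortest $s$--$v$ path through $m$ and comparing it with $c$ along $I(a,v)$. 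Replacing $\sigma$ by a double-profile throughout gives the double-pairing statement verbatim, and Corollary~\ref{local-to-global} then follows with Lemma~\ref{lem-boules-Helly}, since the radius-$2$ balls of $G$ are exactly the bipartite Helly graphs appearing in its statement.
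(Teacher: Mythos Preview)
Your backward direction (radius-$2$ balls have pairing $\Rightarrow$ $G$ has pairing) is correct and is exactly what the paper has in mind: the local graph of $H_u$ at its center coincides with $B_u$, so Proposition~\ref{prop-gen-matching-stable-set} applied first to each $H_u$ and then to $G$ gives the result. The paper's terse ``follows from Lemma~\ref{lem-boules-Helly} and the proof of Proposition~\ref{prop-gen-matching-stable-set}'' is precisely this.

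For the forward direction, your approach via the claim $\Med_G(\sigma)\cap B_2(a)\neq\varnothing$ is correct, but you leave a genuine gap: you only verify $d(c,s)\leq d(v,s)$ for $t=3$ and write ``I expect it to follow'' for $t>3$. This step is not automatic and deserves a complete argument. Here is how to close it uniformly for all $t\geq 3$. Since $G$ is modular, the triple $a,s,v$ has a median $m\in I(a,v)\cap I(s,v)$ with $d(a,m)\leq d(a,s)\leq 2$, so $d(v,m)=t-d(a,m)\geq t-2\geq 1$ and in particular $m\neq v$. Let $w$ be the neighbor of $v$ on a shortest $(v,m)$-path; then $w\in I(v,m)\subseteq I(a,v)$ and $w\in I(v,m)\subseteq I(v,s)$. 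The crucial property of $c$ from Theorem~\ref{absolute-retracts}(\ref{th-bar-4}) is that $c$ is adjacent to \emph{every} neighbor of $v$ in $I(a,v)$, so $c\sim w$ and hence $d(c,s)\leq 1+d(w,s)=1+(d(v,s)-1)=d(v,s)$. Summing over $s$ gives $F_\sigma(c)\leq F_\sigma(v)$, contradicting the minimality of $d(a,v)$.

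The paper does not spell out this forward direction, so your argument actually supplies more detail than the paper does; but note that your ``routing through $m$'' sketch would not work without exploiting that $c$ is adjacent to \emph{all} neighbors of $v$ in $I(a,v)$ (a mere second common neighbor of \emph{some} such neighbors would not suffice).
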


\subsection{Recognizing graphs with the double-pairing property}

We were not able to settle the complexity of deciding if a graph has
the double-pairing property. However, we can show that this problem is
in co-NP. As explained above, if we are given a profile $\pi$, one can
check whether $\pi$ admits a perfect pairing in time that is
polynomial in the size of $G$ and the size of the profile $\pi$. In
order to prove that recognizing graphs satisfying the pairing property
(respectively, the double-pairing property) is in co-NP, it would then
be sufficient to show that when a graph $G$ does not satisfy the
pairing property (respectively, the double-pairing property), there
exists an even profile $\pi$ (respectively, a double-profile $\pi$)
that does not admit a perfect pairing and whose size is polynomial in
the size of $G$. Unfortunately, we were not able to prove that these
profiles of polynomial size always exist. For the double-pairing
property, we establish the result by reformulating the problem as a
problem of inclusion of polytopes.

For a graph $G=(V,E)$ and a vertex $u\in V$, we define two polytopes
$\Ma(u)$ and $\Me(u)$ as follows. The polytope $\Me(u)$ consists of
all weight functions $b:V \to {\mathbb R}^+$ such that $u$ belongs to
the median set $\Med(b)$. In particular, for each profile $\pi$ on $G$
such that $u \in \Med(\pi)$, the weight function defined by $\pi$
belongs to $\Me(u)$.
\[
  \Me({u}) \quad \begin{cases}
    \sum_{w \in V} b(w) (d(v,w) - d(u,w))  \geq 0  &  \text{for all } v \in V,\\
    b(v) \geq 0 & \text{for all } v \in V.\\
  \end{cases}
\]
Note that $\Me({u})$ is a non-empty polytope defined by a linear
number of inequalities.

The second polytope $\Ma(u)$ consists of all weight functions
$b:V \to {\mathbb R}^+$ such that the auxiliary graph $A_u$ admits a
fractional perfect $b$-matching. The description of the polytope is
defined using the inequalities given by the fractional Hall condition.
\[
  \Ma({u}) \quad \begin{cases}
    \sum_{v \in N(S)} b(v) - \sum_{v \in S} b(v) \geq 0 &  \text{for all stable
      sets } S \text{ of } A_u=(V,E_u),\\
    b(v) \geq 0 & \text{for all } v \in V.\\
  \end{cases}
\]
The number of inequalities defining $\Ma({u})$ is potentially
exponential in the size of $G$. Note however that the separation
problem on $\Ma(u)$ can be solved in polynomial time by solving a
fractional $b$-matching problem. Such an algorithm either provides a
$b$-matching and $b$ is a point of $\Ma(u)$, or it enables to compute
a set $S$ such that $b$ violates the constraint corresponding to $S$.

Observe that both polytopes $\Me(u)$ and $\Ma(u)$ contain the origin
$\bzero$ and that they are invariant by multiplication by a positive
scalar. Therefore both $\Me(u)$ and $\Ma(u)$ are convex cones. Since
both polytopes are defined by a finite set of constraints, they are
polyhedral cones. Since these constraints have integral coefficients,
$\Me(u)$ and $\Ma(u)$ are both rational cones. For $C \in \{ \Me(u),
\Ma(u)\}$, this means that there exists a finite set of vectors $b_1,
\ldots, b_k \in \Z^{|V|}$ such that $C = \left\{ a_1 b_1 + \cdots +
  a_k b_k : a_i \in \R_+ \right\}$. Since all points of
$C$ have non-negative coordinates, necessarily $b_1, \ldots, b_k \in
\N^{|V|}$.

\begin{lemma}\label{lem-MainMe}
  Given a graph $G=(V,E)$, for any vertex $u \in V$,
  $\Ma(u) \subseteq \Me(u)$.
\end{lemma}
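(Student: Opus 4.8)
The plan is to derive the inclusion directly from Lemma~\ref{lem-bMatchMed}, which already contains the analytic heart of the statement; what remains is to unwind the two polytope definitions and interpret them correctly. First I would reformulate membership in each polytope. Membership $b \in \Ma(u)$ is, by definition, that $b$ satisfies the fractional Hall inequalities $\sum_{v \in N(S)} b(v) \geq \sum_{v \in S} b(v)$ over all stable sets $S$ of the auxiliary graph $A_u$, together with nonnegativity. By the fractional Hall condition~\cite{Balinski,SchrijverA} recalled just before the polytope definitions, this system is satisfied precisely when $A_u$ admits a fractional perfect $b$-matching. On the other side, membership $b \in \Me(u)$ is defined by the inequalities $\sum_{w \in V} b(w)\,(d(v,w) - d(u,w)) \geq 0$ for all $v \in V$, which say exactly that $F_b(v) \geq F_b(u)$ for every $v$, i.e.\ that $u \in \Med(b)$.

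With these two reformulations in hand, the inclusion is immediate: I would take an arbitrary $b \in \Ma(u)$, so that $A_u$ carries a fractional perfect $b$-matching; Lemma~\ref{lem-bMatchMed} then yields $u \in \Med(b)$, which by the reformulation of $\Me(u)$ above is exactly the statement $b \in \Me(u)$. Since $b$ was arbitrary, this gives $\Ma(u) \subseteq \Me(u)$.

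The only step that requires any attention is the equivalence between the inequality description of $\Ma(u)$ and the genuine existence of a fractional perfect $b$-matching in $A_u$; but this is precisely the fractional Hall theorem cited in the paragraph preceding the definition of the polytopes, so no additional argument is needed. Consequently I do not expect a real obstacle here: the lemma is essentially a repackaging of Lemma~\ref{lem-bMatchMed} in the language of the polytopes $\Ma(u)$ and $\Me(u)$, and the proof collapses to a two-line consequence once the defining inequalities of the two polytopes are interpreted as ``$A_u$ has a fractional perfect $b$-matching'' and ``$u \in \Med(b)$'', respectively.
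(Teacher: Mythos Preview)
Your proposal is correct and follows essentially the same approach as the paper: interpret membership in $\Ma(u)$ via the fractional Hall condition as the existence of a fractional perfect $b$-matching in $A_u$, invoke Lemma~\ref{lem-bMatchMed} to get $u\in\Med(b)$, and read this as $b\in\Me(u)$. The paper's proof is precisely this two-line argument, so there is nothing to add.
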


\begin{proof}
  For any weight function $b: V \to \R^+$, if $b$ is a point of
  $\Ma(u)$, then by the fractional Hall Condition, there exists a
  perfect $b$-matching in $A_u$ and thus, by
  Lemma~\ref{lem-bMatchMed}, $u \in \Med(b)$. This establishes that
  $b \in \Me(u)$.
\end{proof}

\begin{proposition}\label{lem-MaEqMe}
  A graph $G=(V,E)$ satisfies the double-pairing property if and only
  if $\Ma(u) = \Me(u)$ for all $u \in V$.
\end{proposition}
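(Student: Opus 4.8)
The plan is to prove the two inclusions separately, using Lemmas~\ref{lem-MainMe}, \ref{lem-integral-k}, and~\ref{pairing1}, together with the observation that $\Ma(u)$ and $\Me(u)$ are cones stable under multiplication by a positive scalar. For the easy direction ($\Leftarrow$), suppose $\Ma(u)=\Me(u)$ for every $u$. Given an arbitrary profile $\tau$, choose any $u\in\Med(\tau)$; then the weight function $\tau$ lies in $\Me(u)=\Ma(u)$, so $A_u$ admits a fractional perfect $\tau$-matching, i.e. $\dM(\tau,u)\neq\varnothing$. By Lemma~\ref{lem-integral-k}, $\tau^2$ admits a perfect pairing, which is exactly the double-pairing property.

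For the hard direction ($\Rightarrow$), I assume $G$ has the double-pairing property and argue by contradiction, since Lemma~\ref{lem-MainMe} already gives $\Ma(u)\subseteq\Me(u)$. Suppose $\Ma(u_0)\subsetneq\Me(u_0)$ for some $u_0$. Both cones are defined by inequalities with integer (distance) coefficients and $\Ma(u_0)$ is closed, so $\Me(u_0)\setminus\Ma(u_0)$ is relatively open in $\Me(u_0)$ and hence meets the dense set of rational points; clearing denominators and using scale-invariance yields an integral profile $\pi_0\in\Me(u_0)\setminus\Ma(u_0)$, so $u_0\in\Med(\pi_0)$ but $A_{u_0}$ has no fractional perfect $\pi_0$-matching. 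By the fractional Hall condition defining $\Ma(u_0)$, there is a stable set $S$ of $A_{u_0}$ with $\pi_0(S)>\pi_0(N(S))$, whence $\pi_0(S)\ge\pi_0(N(S))+1$ and $S\neq\varnothing$.

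The crux is that I must force $u_0$ to be the \emph{unique} median without re-entering $\Ma(u_0)$: simply adjoining copies of $u_0$ relaxes the Hall inequality (note $u_0$ is universal in $A_{u_0}$, so $u_0\in N(S)\setminus S$), and a single added copy could exactly cancel the deficit. I resolve this by doubling first: set $\pi:=2\pi_0+\delta_{u_0}$ (that is, $\pi=\pi_0^2$ together with one further occurrence of $u_0$). For $x\neq u_0$ we then have $F_\pi(x)=2F_{\pi_0}(x)+d(x,u_0)>2F_{\pi_0}(u_0)=F_\pi(u_0)$, so $u_0$ is the unique median of $\pi$; and evaluating the same $S$ gives $\pi(S)-\pi(N(S))=2\bigl(\pi_0(S)-\pi_0(N(S))\bigr)-1\ge 1>0$, so $S$ still violates Hall and $\pi\notin\Ma(u_0)$.

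Finally I contradict double-pairing applied to $\tau=\pi$. If $\pi^2$ had a perfect pairing $P$ with witness vertex $w$, then by Lemma~\ref{pairing1} $w\in\Med(\pi^2)=\Med(\pi)=\{u_0\}$, so $w=u_0$ and $P$ is an integral, hence fractional, perfect $\pi^2$-matching in $A_{u_0}$, giving $\pi^2\in\Ma(u_0)$; but $\pi^2=2\pi$ and $\pi\notin\Ma(u_0)$, so scale-invariance yields $\pi^2\notin\Ma(u_0)$, a contradiction. Thus $\pi^2$ has no perfect pairing, contradicting the double-pairing property, and proving $\Ma(u_0)=\Me(u_0)$. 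The main obstacle is precisely this third-paragraph maneuver: pinning the median set down to the single vertex $u_0$ while \emph{strictly} preserving the Hall violation, which is why the doubling-then-adding-one-copy construction, rather than naive addition of copies of $u_0$, is essential.
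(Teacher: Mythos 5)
Your proof is correct, and the $\Leftarrow$ direction coincides with the paper's (pick $u\in\Med(\tau)$, use $\Me(u)\subseteq\Ma(u)$ and Lemma~\ref{lem-integral-k}). On the $\Rightarrow$ direction you take a genuinely different route: you argue by contradiction, extract an integral $\pi_0\in\Me(u_0)\setminus\Ma(u_0)$ via closedness of $\Ma(u_0)$ and density of rational points, and then perturb to $\pi=2\pi_0+\delta_{u_0}$ so that $u_0$ becomes the \emph{unique} median while the Hall-violating stable set survives. The paper instead argues directly: by scale-invariance and rationality it suffices to show that every double-profile $\pi\in\Me(u)$ lies in $\Ma(u)$, and this is immediate because a perfect pairing $P$ of $\pi$ satisfies $D_\pi(P)=F_\pi(w)=F_\pi(u)$ for \emph{every} median $u$, so by the equality case of Lemma~\ref{pairing1} all pairs of $P$ are edges of $A_u$ for every $u\in\Med(\pi)$ simultaneously. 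This observation dissolves the difficulty your third paragraph is built to overcome: you do not need the witness vertex of the pairing to be $u_0$, only that $u_0$ is \emph{some} median, which you already have. Your perturbation is nevertheless a valid (if heavier) substitute; its one loose end is the assertion that $u_0\in N(S)\setminus S$, which fails in the degenerate case $S=\{u_0\}$ (a stable set since $u_0$ is universal in $A_{u_0}$, loops aside) — but there the correction term flips sign to $+1$ and the violation $\pi(S)>\pi(N(S))$ persists a fortiori, so the argument still closes. In exchange for the extra work, your version produces an explicit profile of controlled size witnessing the failure of the double-pairing property, which is in the spirit of the co-NP discussion following the proposition; the paper's version is shorter and avoids any perturbation.
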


\begin{proof}
  First suppose that $G$ satisfies the double-pairing property.  By
  Lemma~\ref{lem-MainMe}, $\Ma(u) \subseteq \Me(u)$. We now prove the
  converse inclusion. Since $\Me(u)$ is a rational cone, there exists
  a finite set of vectors $b_1, \ldots, b_k \in \N^{|v|}$ such that
  $\Me(u) = \left\{ a_1 b_1 + \cdots + a_k b_k : a_i \in \R_+
  \right\}$. To prove $\Me(u) \subseteq \Ma(u)$, it is sufficient to
  show that $b_i \in \Ma(u)$, for each $1 \leq i \leq k$. Since both
  $\Ma(u)$ and $\Me(u)$ are invariant by multiplication by a positive
  scalar, we can assume that each coordinate of $b_i$ is an even
  integer, i.e., that $b_i$ corresponds to a double profile $\pi$.
  Since $G$ satisfies the double-pairing property, for any such double
  profile $\pi$, there exists a perfect $\pi$-matching in $A_u$, and
  thus $\pi \in \Ma(u)$.

  Conversely, assume that $\Me(u) \subseteq \Ma(u)$ for all $u \in V$
  and consider a double-profile $\tau = \pi^2$ on $G$. Pick any
  $u \in \Med(\tau) = \Med(\pi)$. Then
  $\pi \in \Me(u) \subseteq \Ma(u)$, and thus, $\dM(\pi,u)$ is
  non-empty. By Lemma~\ref{lem-integral-k}, $\tau = \pi^2$ admits a
  perfect pairing. This establishes that $G$ satisfies the
  double-pairing property.
\end{proof}

We now explain how to decide if a given graph $G$ does not satisfy the
double-pairing property in non-deterministic polynomial time.  To do
so, we first guess a vertex $u$ such that $\Ma(u) \subsetneq
\Me(u)$. We construct the auxiliary graph $A_u$ and we guess a stable
set $S$ of $A_u$ such that the corresponding constraint of $\Ma(u)$
separates a point of $\Me(u)$ from $\Ma(u)$. In order to check that
this constraint indeed separates a point of $\Me(u)$ from $\Ma(u)$, we
minimize the function
${(b(v))}_{v\in V} \mapsto \sum_{v \in N(S)} b(v) - \sum_{v \in S} b(v)$
on the polytope $\Me(u)$. This can be done in polynomial time since
this is a linear program with a linear number of constraints. If the
minimum is negative, then $\Me(u)$ is not contained in $\Ma(u)$ and
thus $G$ does not sastisfy the double-pairing property by
Proposition~\ref{lem-MaEqMe}. We thus have the following result.

\begin{proposition}
  Recognizing graphs satisfying the double-pairing property belongs to
  co-NP.
\end{proposition}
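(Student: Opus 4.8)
The plan is to show that the complementary problem---deciding that $G$ \emph{does not} satisfy the double-pairing property---lies in NP, by exhibiting a polynomial-size certificate checkable in polynomial time. The starting point is Proposition~\ref{lem-MaEqMe}: $G$ satisfies the double-pairing property if and only if $\Ma(u)=\Me(u)$ for every $u\in V$. Combined with Lemma~\ref{lem-MainMe}, which guarantees $\Ma(u)\subseteq\Me(u)$ for all $u$, this means that $G$ fails the double-pairing property precisely when there is a vertex $u$ for which some weight function lies in $\Me(u)\setminus\Ma(u)$. The difficulty is that $\Ma(u)$ is described by exponentially many inequalities (one per stable set of $A_u$), so we cannot test the equality $\Ma(u)=\Me(u)$ directly; instead I would certify non-containment by producing a single violated Hall inequality.

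Concretely, the certificate I would guess is a pair $(u,S)$ where $u\in V$ and $S$ is a stable set of the auxiliary graph $A_u=(V,E_u)$. To see that such a certificate exists whenever $G$ fails the property, pick $u$ with $\Ma(u)\subsetneq\Me(u)$ and any point $b\in\Me(u)\setminus\Ma(u)$. Since $b\notin\Ma(u)$, the graph $A_u$ admits no fractional perfect $b$-matching, so by the fractional Hall condition there is a stable set $S$ of $A_u$ with $\sum_{v\in N(S)}b(v)<\sum_{v\in S}b(v)$; being a subset of $V$, this $S$ has a description of polynomial size, as required.

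For the verification step, given $(u,S)$ I would first build $A_u$ in polynomial time (joining $v,w$ whenever $d(v,w)=d(v,u)+d(u,w)$) and check that $S$ is stable in $A_u$. Then I would minimize the linear objective $b\mapsto\sum_{v\in N(S)}b(v)-\sum_{v\in S}b(v)$ over $\Me(u)$, a linear program with only a linear number of constraints and hence solvable in polynomial time. Because $\Me(u)$ is a polyhedral cone---it contains $\bzero$ and is stable under positive scaling---this minimum is either $0$ or $-\infty$, and it is negative (equivalently, the program is unbounded below) exactly when some $b\in\Me(u)$ satisfies $\sum_{v\in N(S)}b(v)<\sum_{v\in S}b(v)$, i.e.\ exactly when the constraint indexed by $S$ separates a point of $\Me(u)$ from $\Ma(u)$. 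In that case $\Me(u)\not\subseteq\Ma(u)$, so by Lemma~\ref{lem-MainMe} we have $\Ma(u)\subsetneq\Me(u)$, and Proposition~\ref{lem-MaEqMe} yields that $G$ does not satisfy the double-pairing property.

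Combining the two directions, $G$ fails the double-pairing property if and only if a valid certificate $(u,S)$ exists, and validity is decidable in polynomial time; hence the non-membership problem is in NP and recognizing graphs with the double-pairing property is in co-NP. The step I expect to be the crux is controlling the exponential inequality system defining $\Ma(u)$: the whole argument rests on the fact that a point of $\Me(u)$ lying outside $\Ma(u)$ is always witnessed by a \emph{single} Hall inequality, so that the certificate stays small and the separation test reduces to the polynomially sized linear program over $\Me(u)$ rather than to any direct computation with $\Ma(u)$.
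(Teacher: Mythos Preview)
Your proposal is correct and follows essentially the same approach as the paper: guess a vertex $u$ and a stable set $S$ of $A_u$, then verify the certificate by minimizing the linear functional $b\mapsto\sum_{v\in N(S)}b(v)-\sum_{v\in S}b(v)$ over the linearly-described polyhedron $\Me(u)$. Your write-up is in fact slightly more explicit than the paper's, in that you spell out the existence direction (why a violated Hall inequality always exists when $\Me(u)\not\subseteq\Ma(u)$) and observe that $\Me(u)$ is a cone so the LP value is either $0$ or $-\infty$.
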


\begin{remark}
  We do not know if we can extend this method for the recognition of
  graphs with the pairing property. This is due to the fact that the
  set of profiles that admit a perfect pairing is not stable by convex
  combinations.  Indeed, consider the graph on the left
  Figure~\ref{fig-cex-PP} that is made of two disjoint triangles
  $u,v,w$ and $u',v',w'$, and denote it by $C$. Let $B$ be the
  complement of $C$ and consider the bipartite Helly graph
  $R({(\cC(B))}^*)$ defined in Subsection~\ref{subs-bipHwoPP}. Then,
  since $C$ does not admit a perfect matching, the even profile
  $\pi = (u,v,w,u',v',w')$ does not admit a perfect pairing in
  $R({(\cC(B))}^*)$. However, both the empty profile and the double
  profile $\pi^2$ admit a perfect pairing in $R({(\cC(B))}^*)$.
\end{remark}

\begin{question}
  Can we recognize graphs satisfying the double-pairing property
  (respectively, the pairing property) in polynomial time?  Is the
  recognition of graphs satisfying the pairing property in co-NP?
\end{question}

\section{Benzenoid graphs}

The main tools in our paper are the axioms (A), (B), (C), and (T),
satisfied by the median function $\Med$ in all graphs. These axioms
are simple to state and are meaningful in the context of consensus
theory. However, these axioms are difficult to handle and do not allow
to establish even simple structural properties of ABC- or ABCT-graphs.
In this section, we formulate two other axioms (T$_2$) and (E$_k$)
satisfied by $\Med$. We show that the 6-cycle $C_6$ has two ABC-functions.  Finally, we
prove that benzenoids (which are obtained from 6-cycles by
amalgamations) are ABCT$_2$- and ABCE$_2$-graphs.

\subsection{Axioms for equilateral metric triangles}
As for axiom (T), the axioms we consider deal with triplets of
vertices.  First, we start with a generalizations of axioms (T) and
(T$^-$):
\begin{enumerate}[(T$_2$)]
\item for any equilateral metric triangle $uvw$ of size 2,
  $\{u,v,w\} \subseteq L(u,v,w)$.
\end{enumerate}
Unfortunately, there is no straightforward way to generalize (T) and
(T$_2$) to triplets of vertices at distance $k$. However, we can
define the axiom (E$_k$), which coincides with (T$^-$) when $k=1$:
\begin{enumerate}[(E$_k$)]
\item \emph{Equilateral:} for any equilateral metric triangle $uvw$ of
  size $k$,
if $u \in L(u,v,w)$, then $\{u,v,w\} \subseteq L(u,v,w)$.
\end{enumerate}

\begin{lemma}\label{TE}
  The median function $\Med$ satisfies the axioms (T$_2$) and (E$_k$).
\end{lemma}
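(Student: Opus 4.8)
The plan is to verify the two axioms separately, by showing that the median function returns each of $u,v,w$ as a median of the profile $(u,v,w)$ under the stated equilateral hypotheses.

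For axiom (T$_2$), let $uvw$ be an equilateral metric triangle of size $2$, so $d(u,v)=d(v,w)=d(w,u)=2$. First I would compute $F_{(u,v,w)}(u) = d(u,u)+d(u,v)+d(u,w) = 0+2+2 = 4$, and by symmetry $F_{(u,v,w)}(v) = F_{(u,v,w)}(w) = 4$. To conclude $\{u,v,w\}\subseteq \Med(u,v,w)$ it remains to show no vertex $x$ achieves $F_{(u,v,w)}(x) < 4$. Pick an arbitrary $x$ and set $a=d(x,u)$, $b=d(x,v)$, $c=d(x,w)$; I want $a+b+c\ge 4$. The key point is that because $uvw$ is a \emph{metric triangle} (the intervals pairwise meet only at the common endpoints), $x$ cannot lie strictly inside two of the three intervals simultaneously in a way that shortcuts the triangle. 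Concretely, by the triangle inequality applied to each pair, $a+b\ge d(u,v)=2$, $b+c\ge 2$, and $c+a\ge 2$, so summing gives $2(a+b+c)\ge 6$, i.e. $a+b+c\ge 3$; this alone is not enough. The finer argument is that $a+b+c=3$ would force $x$ to lie on shortest paths realizing all three distances with the slack distributed as a single unit, which is incompatible with the metric-triangle condition $I(u,v)\cap I(u,w)=\{u\}$ (and its symmetric versions): a vertex at total distance $3$ must be a common point of two of the three intervals other than a corner, contradicting that the intervals meet only at endpoints. Hence $a+b+c\ge 4 = F_{(u,v,w)}(u)$, so $u,v,w$ are all medians.

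For axiom (E$_k$), let $uvw$ be an equilateral metric triangle of size $k$ and suppose $u\in \Med(u,v,w)$. Here $F_{(u,v,w)}(u)=d(u,v)+d(u,w)=2k$, and likewise $F_{(u,v,w)}(v)=F_{(u,v,w)}(w)=2k$, so in fact $u,v,w$ always have equal total distance $2k$; the hypothesis $u\in\Med(u,v,w)$ then says precisely that $2k$ is the minimum value of $F_{(u,v,w)}$, and this minimality is symmetric in $u,v,w$. Consequently $v,w\in\Med(u,v,w)$ as well, giving $\{u,v,w\}\subseteq\Med(u,v,w)=L(u,v,w)$. This is essentially immediate once one records that the three total distances coincide, so (E$_k$) costs almost nothing beyond the observation that $F_{(u,v,w)}$ takes the same value at all three corners of an equilateral triangle.

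The main obstacle is the sharp lower bound $a+b+c\ge 4$ in the (T$_2$) argument: ruling out a vertex $x$ with total distance exactly $3$. I expect this to be where the metric-triangle hypothesis is genuinely used rather than just the equilateral-size hypothesis. The cleanest way to carry it out is to argue that total distance $3$ forces, up to symmetry, $(a,b,c)=(1,1,1)$ or $(a,b,c)=(0,1,2)$ type configurations, and then show each places $x$ in the forbidden pairwise intersection of two intervals (for $(1,1,1)$, $x$ would be a common neighbor making the intervals overlap; for the degenerate case $x$ coincides with a corner and then the distances cannot sum to $3$). I would phrase this via the interval-intersection identities $I(u,v)\cap I(u,w)=\{u\}$ directly, deducing a contradiction in each case.
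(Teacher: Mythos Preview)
Your proof is correct and follows essentially the same approach as the paper: for (E$_k$) both arguments simply observe that $F_{(u,v,w)}$ takes the common value $2k$ at all three corners, and for (T$_2$) both rule out a vertex at total distance $<4$ by using the metric-triangle property to forbid a common neighbor of $u,v,w$.

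The paper's presentation of (T$_2$) is much more direct: it simply says that any vertex $z$ is not adjacent to at least one of $u,v,w$, hence $F_\pi(z)\ge 2+1+1=4$. This is exactly your $(1,1,1)$ exclusion, stated without the detour through summing the three triangle inequalities. Note also that your case $(0,1,2)$ is a phantom: from $a+b\ge 2$, $b+c\ge 2$, $c+a\ge 2$, equality in the sum $a+b+c=3$ forces equality in all three, giving $a=b=c=1$ immediately. So the only configuration to exclude is $(1,1,1)$, and that is precisely where a common neighbor $x$ of $u,v,w$ would lie in $I(u,v)\cap I(u,w)\setminus\{u\}$, contradicting the metric-triangle hypothesis. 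Stating it this way removes the need for any case analysis.
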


\begin{proof}
  To prove (T$_2$), pick any equilateral metric triangle $uvw$ of size
  2. Then $F_{\pi}(u)=F_{\pi}(v)=F_{\pi}(w)=4$. Any vertex $z$ of $G$
  is not adjacent to at least one of the vertices $u,v,w$, thus
  $F_{\pi}(z)\ge 4$, proving that $\{u,v,w\} \subseteq \Med(u,v,w)$.
  To prove (E$_k$), pick any equilateral metric triangle $uvw$ of size
  $k$ and suppose that $u \in \Med(u,v,w)$. Since
  $F_{\pi}(u)=2k=F_{\pi}(v)=F_{\pi}(w)$, we conclude that
  $v,w\in \Med(u,v,w)$.
\end{proof}

\subsection{$C_6$ has two  ABC-functions}
In this subsection, we define an ABC-function $L_6$ on $C_6$ different
from $\Med$. Denote by $V=\{v_0, v_1, v_2, v_3, v_4, v_5\}$ the
ordered set of vertices of the graph $C_6$ such that $v_i$ is a
neighbor of $v_{i-1}$ and $v_{i+1}$ (all additions are done modulo 6).
For a profile $\pi$ on $C_6$, let $\pi_i=\pi(v_i)$ denote the number
of occurrences of $v_i$ in $\pi$. 

Since $I(v_i, v_{i+3})=V$, for each ABC-function $L$ on $C_6$, we have
$L(v_i, v_{i+3})=I(v_i, v_{i+3})=V$ and
$L(\pi,v_i, v_{i+3}) = L(\pi)$. For each profile $\pi$, we denote by
$\pimi$ the profile such that $\pimi_i=\pi_i-\min\{\pi_i, \pi_{i+3}\}$
for each $i\in \{0,\ldots, 5\}$. Notice that either $\pimi_i$ or
$\pimi_{i+3}$ is equal to 0 and that $L(\pi^\circ) = L(\pi)$.  If
$\pimi_i, \pimi_{i+2}, \pimi_{i+4}$ are not equal to 0, then we call
$\pimi$ an \emph{alternate} profile.

Consider the following consensus function $L_6$:
\[
  L_6(\pi) =
  \begin{cases}
    \{v_i\} & \text{if } \pimi_i,\pimi_{i+2},\pimi_{i+4}>0 \text{
      and } i=\min\{j : \pimi_j=\max\{\pimi_i, \pimi_{i+2},
    \pimi_{i+4} \}\}
    \\
    \Med(\pi) & \text{otherwise.}
  \end{cases}
\]

Note that by definition of $L$, if $\pi^\circ$ is not an alternate
profile, then
$L_6(\pi) = \Med(\pi) = \Med(\pi^\circ) = L_6(\pi^\circ)$. Obviously,
$L_6\ne \Med$ since for the profile $\pi=(v_0,v_2,v_4)$ we have
$\Med(\pi)=\{v_0, v_2, v_4\}$ whereas $L_6(\pi)=\{v_0\}$. To prove
that $L_6$ is an ABC-function, we use the following lemmas:
\begin{lemma}\label{lem:c6:intmin}
  For any $\pi,\rho\in V^*$, if $\sigma=\pi\rho$ and
  $\tau=\pimi\rhom$, then $\sigm=\tau^{\circ}$.
\end{lemma}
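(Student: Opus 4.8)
The plan is to reduce everything to a single algebraic identity on the three antipodal pairs $\{v_0,v_3\}$, $\{v_1,v_4\}$, $\{v_2,v_5\}$, which the reduction operation $(\cdot)^{\circ}$ treats independently. First I would recast both sides in terms of the count vectors: since concatenation merely adds occurrences, $\sigma_i=\pi_i+\rho_i$ for every $i$, and likewise $\tau_i=\pimi_i+\rhom_i$. Because the defining formula $\pimi_i=\pi_i-\min\{\pi_i,\pi_{i+3}\}$ only couples the index $i$ with its antipode $i+3$, the reduction factors over the three pairs, so it suffices to verify $\sigm_i=\tau^{\circ}_i$ separately on each pair.

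Next I would isolate the governing invariant. On a fixed antipodal pair, the reduction sends $(\pi_i,\pi_{i+3})$ to $(\max\{\pi_i-\pi_{i+3},0\},\max\{\pi_{i+3}-\pi_i,0\})$, so its output is determined entirely by the \emph{signed difference} $\delta_i(\pi):=\pi_i-\pi_{i+3}$; explicitly the reduced pair equals $(\max\{\delta_i(\pi),0\},\max\{-\delta_i(\pi),0\})$. Two facts about $\delta_i$ then make the lemma fall out: it is additive under concatenation, $\delta_i(\sigma)=\delta_i(\pi)+\delta_i(\rho)$, and it is preserved by reduction, $\delta_i(\pimi)=\pimi_i-\pimi_{i+3}=\pi_i-\pi_{i+3}=\delta_i(\pi)$ (one checks the two cases $\pi_i\ge\pi_{i+3}$ and $\pi_i<\pi_{i+3}$, in each of which exactly one of $\pimi_i,\pimi_{i+3}$ vanishes).

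Combining these, $\delta_i(\tau)=\delta_i(\pimi)+\delta_i(\rhom)=\delta_i(\pi)+\delta_i(\rho)=\delta_i(\sigma)$, so $\sigma$ and $\tau$ share the same signed difference on every antipodal pair. Since the reduction restricted to a pair is a function of the signed difference alone, this immediately yields $\sigm_i=\tau^{\circ}_i$ for all $i$, and hence $\sigm=\tau^{\circ}$, as required.

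I do not expect any genuine obstacle here: the whole content is the observation that $(\cdot)^{\circ}$ is driven by the antipodal signed differences, which are both additive under concatenation and invariant under reduction. The only mild care needed is the short case analysis establishing $\delta_i(\pimi)=\delta_i(\pi)$, together with keeping the bookkeeping of the mod-$6$ antipodal indices straight.
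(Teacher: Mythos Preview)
Your argument is correct. Both your proof and the paper's work coordinate-by-coordinate on the three antipodal pairs, so the overall strategy is the same; the difference is packaging. The paper carries out a direct chain of equalities, using that subtracting the same constant from both arguments of a $\min$ shifts the $\min$ by that constant, to pass from $\tau_i^{\circ}$ to $\sigma_i^{\circ}$ in one line. You instead isolate the invariant $\delta_i(\pi)=\pi_i-\pi_{i+3}$, observe that the reduced pair is $(\max\{\delta_i,0\},\max\{-\delta_i,0\})$, and then show $\delta_i$ is both additive under concatenation and fixed by reduction. Your route is slightly more conceptual and makes clear \emph{why} the identity holds (the reduction depends only on the antipodal difference), while the paper's is a shorter mechanical verification; mathematically they are equivalent, since $a-\min\{a,b\}=\max\{a-b,0\}$ is exactly the bridge between the two presentations.
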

\begin{proof}
  For any $i\in \{0,\ldots,5\}$, we have:
  \begin{align*}
    \tau_i^{\circ} &=\tau_i-\min\{\tau_i, \tau_{i+3} \}
                     =\pimi_i+\rhom_i - \min\{\pimi_i +\rhom_i,
                     \pimi_{i+3} +\rhom_{i+3}\}\\
&= \pimi_i+\rhom_i - (\min\{\pi_i + \rho_i,
                     \pi_{i+3}+\rho_{i+3}\}-\min\{\pi_i, \pi_{i+3}\} -
                     \min\{\rho_i, \rho_{i+3}  \})\\
                   &=\pi_i + \rho_i -
                     \min\{\pi_i+\rho_i,\pi_{i+3}+\rho_{i+3}\}
                     =\sigma_i-\min\{\sigma_i, \sigma_{i+3} \} =\sigm_i,
  \end{align*}
  concluding the proof.
\end{proof}

\begin{lemma}\label{lem:abcc6}
  Let $L$ be an ABC-function on $C_6$ and $\pi$ a profile such that
  $\pi^\circ$ is nonempty and non-alternate. A vertex $v_i$ belongs to
  $L(\pi) = L(\pi^{\circ})$ if and only if up to symmetry we are in the
  following cases of $\pimi$ and of $L(\pi)$:
  \begin{align}
    \text{if } \pimi &=(v_i^{\pimi_i})  & & &  L(\pi) &=\{v_i\} \\
    \text{if }  \pimi &=(v_i^{\pimi_i}, v_{i+1}^{\pimi_{i+1}})
                                        &\text{and } &\pimi_i \geq \pimi_{i+1},
                                            &  L(\pi) &=
                                                        \begin{cases}
                                                          I(v_i,v_{i+1})& \text{if } \pimi_i=\pimi_{i+1}, \\
                                                          \{v_i\}& \text{otherwise} \\
                                                        \end{cases}\\
    \text{if }  \pimi &=(v_i^{\pimi_i}, v_{i+2}^{\pimi_{i+2}}) &\text{and } &\pimi_i \geq \pimi_{i+2},
                                            &  L(\pi) &=
                                                        \begin{cases}
                                                          I(v_i,v_{i+2})& \text{if }\pimi_i=\pimi_{i+2}, \\
                                                          \{v_i\}& \text{otherwise} \\
                                                        \end{cases} \\
    \text{if }  \pimi &=(v_{i-1}^{\pimi_{i-1}},v_{i+1}^{\pimi_{i+1}})
                                        &\text{and } &\pimi_{i+1} =\pimi_{i-1},
                                            &  L(\pi) &=I(v_{i+1},v_{i-1})\\
    \text{if }    \pimi &=(v_i^{\pimi_i}, v_{i+1}^{\pimi_{i+1}},
                          v_{i+2}^{\pimi_{i+2}})
                                        & \text{and } &\pimi_i\geq \pimi_{i+1}+\pimi_{i+2},
                                            &  L(\pi) &=
                                                        \begin{cases}
                                                          I(v_i,v_{i+1})& \text{if }\pimi_i=\pimi_{i+1}+\pimi_{i+2}, \\
                                                          \{v_i\}& \text{otherwise} \\
                                                        \end{cases} \\
    \text{if }     \pimi &=(v_{i-1}^{\pimi_{i-1}},v_i^{\pimi_{i}},
                           v_{i+1}^{\pimi_{i+1}})
                                        & \text{and } &\pimi_{i} \geq |\pimi_{i+1} - \pimi_{i-1}|,
                                            &  L(\pi) &=
                                                        \begin{cases}
                                                          I(v_i,v_{i+1})& \text{if }\pimi_{i}+\pimi_{i-1}=\pimi_{i+1}, \\
                                                          I(v_i,v_{i-1})& \text{if }\pimi_{i}+\pimi_{i+1}=\pimi_{i-1}, \\
                                                          \{v_i\}& \text{otherwise}. \\
                                                        \end{cases}
  \end{align}
\end{lemma}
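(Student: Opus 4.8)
The plan is to read the six rows of the displayed characterization (which I will call Cases~(1)--(6)) as a complete determination of $L(\pi)=L(\pi^{\circ})$ for every nonempty non-alternate reduced profile, and to establish them by increasing size of the support together with a controlled use of Consistency. First I would record the consequences of Betweenness that serve as base cases: since $I(v_i,v_{i+1})=\{v_i,v_{i+1}\}$, $I(v_i,v_{i+2})=\{v_i,v_{i+1},v_{i+2}\}$ and $I(v_i,v_{i+3})=V$, axiom (B) fixes these values of $L$, and repeated application of (C) to $(v_i^{m})$ gives $L(v_i^{m})=\{v_i\}$. I would also check that the list is exhaustive: after antipodal cancellation the support of $\pi^{\circ}$ meets each of the three antipodal pairs $\{v_0,v_3\},\{v_1,v_4\},\{v_2,v_5\}$ at most once, so it is a single vertex, a pair at distance $1$ or $2$, or a transversal of all three pairs; of the eight transversals exactly two are the alternating triples $\{v_0,v_2,v_4\}$ and $\{v_1,v_3,v_5\}$, so every non-alternate support of size three consists of three consecutive vertices. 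Thus, up to the dihedral symmetry of $C_6$, the forms in Cases~(1)--(6) are the only ones, which yields the ``only if'' direction.

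Next I would settle the small cases. Case~(1) is immediate from $L(v_i^{m})=\{v_i\}$. For Case~(2), when $\pi^{\circ}_i=\pi^{\circ}_{i+1}$ I would write $\pi^{\circ}=(v_i,v_{i+1})^{\pi^{\circ}_i}$ and peel off factors with (C) to get $L(\pi)=I(v_i,v_{i+1})$; when $\pi^{\circ}_i>\pi^{\circ}_{i+1}$ I would split off the surplus copies of $v_i$ and intersect $\{v_i\}$ with $I(v_i,v_{i+1})$ through (C), giving $\{v_i\}$. Cases~(3) and~(4) are handled identically, with $I(v_i,v_{i+2})$ in place of $I(v_i,v_{i+1})$; indeed Case~(4) is exactly the balanced subcase of Case~(3) rewritten with the midpoint labelled $v_i$, which is the form needed below.

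The heart of the argument is the three-consecutive cases. For Case~(5), when $\pi^{\circ}_i=\pi^{\circ}_{i+1}+\pi^{\circ}_{i+2}$ I would use the decomposition $\pi^{\circ}=(v_i^{\pi^{\circ}_{i+1}},v_{i+1}^{\pi^{\circ}_{i+1}})(v_i^{\pi^{\circ}_{i+2}},v_{i+2}^{\pi^{\circ}_{i+2}})$ and apply (C) to intersect $I(v_i,v_{i+1})$ with $I(v_i,v_{i+2})=\{v_i,v_{i+1},v_{i+2}\}$, obtaining $I(v_i,v_{i+1})$; the strict subcase then follows by splitting off the extra copies of $v_i$ and intersecting with $\{v_i\}$. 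For Case~(6), assuming by reflection that $\pi^{\circ}_{i-1}\le\pi^{\circ}_{i+1}$, I would take $\pi^{\circ}=(v_{i-1}^{\pi^{\circ}_{i-1}},v_{i+1}^{\pi^{\circ}_{i-1}})(v_i^{\pi^{\circ}_i},v_{i+1}^{\pi^{\circ}_{i+1}-\pi^{\circ}_{i-1}})$, where the first factor is a balanced distance-$2$ pair (Case~(4)) with value $\{v_{i-1},v_i,v_{i+1}\}$ and the second is an adjacent pair (Case~(2)). The hypothesis $\pi^{\circ}_i\ge|\pi^{\circ}_{i+1}-\pi^{\circ}_{i-1}|$ gives $\pi^{\circ}_i\ge\pi^{\circ}_{i+1}-\pi^{\circ}_{i-1}\ge 0$, so Case~(2) applies to the second factor and returns $\{v_i\}$ in general and $I(v_i,v_{i+1})$ precisely when $\pi^{\circ}_i+\pi^{\circ}_{i-1}=\pi^{\circ}_{i+1}$; intersecting with $\{v_{i-1},v_i,v_{i+1}\}$ through (C) produces the claimed values, and the symmetric boundary $\pi^{\circ}_i+\pi^{\circ}_{i+1}=\pi^{\circ}_{i-1}$ is obtained by reflection.

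The main obstacle is to arrange the decompositions so that each application of (C) is legitimate, i.e. the two factors have intersecting $L$-sets, and so that every case relies only on cases already proved; this dictates the ordering (single vertex, then equal pairs, then unequal pairs, then the boundary subcases of~(5) and~(6) before their strict subcases). The decomposition in Case~(6) is the most delicate point, since one must recognise that the asymmetric triple splits into a balanced distance-$2$ pair plus a residual adjacent pair, which is exactly why Case~(4) is recorded separately with $v_i$ as midpoint. Checking the exhaustiveness of the six forms and the agreement of the overlapping boundary between Cases~(5) and~(6) is routine but must be stated to justify the ``if and only if''.
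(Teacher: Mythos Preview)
Your proof is correct and follows essentially the same strategy as the paper: reduce every nonempty non-alternate $\pi^{\circ}$ to a support in three consecutive vertices, then use (B) together with (C) to peel off balanced pairs and singletons until the value of $L$ is forced. The only cosmetic difference is organizational: the paper normalizes once to $\pi^{\circ}=(v_0^{a},v_1^{b},v_2^{c})$ with $a\geq c$, peels off the balanced distance-$2$ block $(v_0^{c},v_2^{c})$, analyzes the residual adjacent pair $(v_0^{a-c},v_1^{b})$, and then reads off which of Cases~(i)--(vi) each vertex lands in; you instead treat Cases~(1)--(6) one at a time with a decomposition tailored to each (in particular, your Case~(5) splits off an adjacent balanced pair first rather than a distance-$2$ one), but the underlying computation is the same.
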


\begin{proof}
  Since $\pi^{\circ}$ is non-alternate, we can assume without loss of
  generality that $\pi_3^\circ = \pi_4^\circ = \pi_5^\circ = 0$. Let
  $a = \pi_0^\circ$, $b = \pi_1^\circ$, $c = \pi_2^\circ$ and up to
  symmetry, we can assume that $a \geq c$. Note that since $\pi^\circ$
  is not empty, $a+b+c \geq 1$. If $a = c$ and $b = 0$ (in this case,
  $a = c \geq 1$), then
  $L(\pi) = L(\pi^{\circ}) = L(v_0,v_2) = I(v_0,v_2)$, and for
  $v_0, v_2$, we are in Case $(iii)$ while for $v_1$, we are in Case
  $(iv)$. If $a = c$ and $b > 0$, then since
  $b \in L(v_0^a,v_2^c) = I(v_0,v_2)$, either
  $L(\pi^\circ) = L(v_1^b) = \{v_1\}$ if $a = c = 0$, or
  $L(\pi^\circ) = L(v_0^a,v_2^c) \cap L(v_1^b) = \{v_1\}$ if
  $a = c > 1$. In the first case, we are in case $(i)$, while in the
  second case, we are in case $(vi)$. Suppose now that $a > c$ and let
  $a' = a-c = |a -c|$. If $c =0$, then
  $L(\pi^{\circ}) = L(v_0^{a'},v_1^b)$. If $c > 0$, then
  $L(\pi^{\circ}) = L(v_0^c,v_2^c)\cap L(v_0^{a'},v_1^b) =
  L(v_0^{a'},v_1^b)$ since $L(v_0^c,v_2^c) = I(v_0,v_2)$ and since
  $L(v_0^{a'},v_1^b) \subseteq \{v_0,v_1\} \subseteq I(v_0,v_2)$. If
  $a' = b$ (i.e., if $a = b+c$), then
  $L(\pi^\circ) = L(v_0^{a'},v_1^b) = \{v_0,v_1\}$. In this case, if
  $c=0$, then we are in Case $(ii)$ for $v_0$ and $v_1$, and if $c>0$,
  then we are in Case $(v)$ for $v_0$ and in Case $(vi)$ for $v_1$.
  If $a' < b$ (i.e., if $b > |a -c|$), then
  $L(\pi^\circ) = L(v_0^{a'},v_1^b) = \{v_1\}$ and we are in Case
  $(ii)$ if $c=0$ and in Case $(vi)$ if $c>0$.  If $a' > b$ (i.e., if
  $a > c$), then $L(\pi^\circ) = L(v_0^{a'}) = \{v_0\}$.  Then, we are
  in Case $(i)$ if $b = c = 0$, in Case $(ii)$ if $b > c = 0$, in Case
  $(iii)$ if $c > b =0$, and in Case $(v)$ if $b>0$ and $c > 0$.
\end{proof}

The next corollary follows from previous lemma and will be used next:

\begin{corollary}\label{cor:not-alt}
  If $\pi^\circ$ is a non-alternate profile, then for
  any ABC-function $L$, $L(\pi) = \Med(\pi)$.
\end{corollary}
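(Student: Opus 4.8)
The plan is to reduce everything to Lemma~\ref{lem:abcc6} together with the fact that $\Med$ is itself an ABC-function. Recall that for every ABC-function $L$ on $C_6$ we already have $L(\pi)=L(\pimi)$, since $L(v_i,v_{i+3})=I(v_i,v_{i+3})=V$ and consistency lets one cancel opposite pairs. Hence it suffices to compute $L(\pimi)$, and the point is that this value depends only on $\pimi$, not on the particular ABC-function $L$. First I would treat the generic case where $\pimi$ is nonempty: since $\pimi$ is non-alternate, Lemma~\ref{lem:abcc6} exhibits, in each of the cases $(i)$--$(vi)$ (up to the symmetries of $C_6$), the set $L(\pi)=L(\pimi)$ explicitly as a function of $\pimi$ alone. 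Because the median function $\Med$ is an ABC-function, applying the same lemma with $L=\Med$ yields exactly the same set for $\Med(\pi)$; therefore $L(\pi)=\Med(\pi)$.

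The only remaining case is the degenerate one where $\pimi$ is empty, i.e.\ $\pi_i=\pi_{i+3}$ for $i=0,1,2$, which Lemma~\ref{lem:abcc6} does not cover. Here I would argue directly. On $C_6$ any vertex $v_j$ satisfies $d(v_j,v_i)+d(v_j,v_{i+3})=3$ (the two antipodal vertices split the cycle into its two arcs), so $F_\pi(v_j)=\sum_{i=0}^{2}\pi_i\,(d(v_j,v_i)+d(v_j,v_{i+3}))=3(\pi_0+\pi_1+\pi_2)$ is independent of $j$; hence $\Med(\pi)=V$. On the other hand, by anonymity $\pi$ is a concatenation of the opposite pairs $(v_i,v_{i+3})$, each of which satisfies $L(v_i,v_{i+3})=V$; applying consistency repeatedly (each intersection being $V\cap V=V\neq\varnothing$) gives $L(\pi)=V$ as well. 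Thus $L(\pi)=V=\Med(\pi)$ in this case too.

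I do not expect any genuine difficulty here: the substance is entirely contained in Lemma~\ref{lem:abcc6}, and this corollary is essentially the observation that its case analysis is independent of $L$, so that feeding in $L=\Med$ forces agreement. The one point requiring care is the empty-$\pimi$ case, where one must avoid appealing to the (undefined) value $L(\varnothing)$ and instead read $L(\pi)=V$ off directly from the opposite-pair structure of $\pi$; this is the only step not immediately handed to us by the preceding lemma.
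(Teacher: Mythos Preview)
Your proposal is correct and follows exactly the approach the paper intends: the paper gives no proof beyond stating that the corollary ``follows from previous lemma,'' and your argument is the natural elaboration --- Lemma~\ref{lem:abcc6} computes $L(\pimi)$ purely in terms of $\pimi$, so plugging in $L=\Med$ forces agreement. Your explicit treatment of the empty-$\pimi$ case (which Lemma~\ref{lem:abcc6} excludes by hypothesis) is a necessary and correct addition that the paper glosses over.
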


We now establish the main result of this subsection:

\begin{proposition}\label{th:L6-ABC}
  $L_6$ is an ABC-function on $C_6$.
\end{proposition}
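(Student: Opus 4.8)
Axiom (A) is immediate, since $L_6$ is defined purely in terms of the occurrence counts $\pi_i$, which are invariant under permutation of the profile. For axiom (B), I would check that on a two-element profile $(u,v)$ we have $L_6(u,v)=I(u,v)$: if $u=v_i$ and $v=v_j$ with $j\neq i+3$, then $\pimi$ is supported on at most two vertices and hence is never alternate, so $L_6(u,v)=\Med(u,v)=I(u,v)$; and if $j=i+3$, then $\pimi$ is empty, so again $L_6$ falls into the $\Med$ branch and returns $I(v_i,v_{i+3})=V$. Thus (B) reduces to the fact that $\Med$ already satisfies (B) on $C_6$, together with the observation that two-element profiles are never alternate.

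The substance of the proof is axiom (C): for all profiles $\pi,\rho$ with $L_6(\pi)\cap L_6(\rho)\neq\varnothing$, we must show $L_6(\pi\rho)=L_6(\pi)\cap L_6(\rho)$. The key structural tool is Lemma~\ref{lem:c6:intmin}, which guarantees that the reduction operation $\pi\mapsto\pimi$ is compatible with concatenation: $(\pi\rho)^\circ=(\pimi\rhom)^\circ$. Since $L_6$ depends on $\pi$ only through $\pimi$ (because $L_6(\pi)=L_6(\pimi)$), this lets me replace all three profiles by their reduced forms and work entirely with reduced profiles. I would then split into cases according to whether the reduced profiles are alternate or not. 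If \emph{both} $\pimi$ and $\rhom$ are non-alternate, then by Corollary~\ref{cor:not-alt} both $L_6(\pi)$ and $L_6(\rho)$ agree with $\Med$; here I would further verify that their concatenation $(\pi\rho)^\circ$ is also non-alternate whenever the median sets intersect (an alternate sum would force a disagreement), so that $L_6(\pi\rho)=\Med(\pi\rho)$ and consistency follows from the fact that $\Med$ itself satisfies (C) on $C_6$.

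The hard case—and the main obstacle—is when at least one of $\pimi$, $\rhom$ is alternate, so that $L_6$ genuinely deviates from $\Med$ by returning a single vertex $\{v_i\}$. Here I would use the explicit description of $L_6$: an alternate reduced profile has $\pimi_i,\pimi_{i+2},\pimi_{i+4}>0$ and $L_6$ selects the singleton $\{v_i\}$ at the index maximizing $\pimi_i$ (with ties broken by smallest index). The crux is to show that if $L_6(\pi)\cap L_6(\rho)$ is nonempty, it must be a single vertex $\{v_i\}$, and that this same $v_i$ is selected by $L_6$ on the concatenation. Because $L_6(\pi)\cap L_6(\rho)\neq\varnothing$ typically forces both outputs to be the \emph{same} singleton, I would argue that the maximizing index and the tie-breaking rule are stable under addition of the reduced count vectors: concretely, using $(\pi\rho)^\circ_i=\pimi_i+\rhom_i-\min\{\pimi_i+\rhom_i,\pimi_{i+3}+\rhom_{i+3}\}$ from Lemma~\ref{lem:c6:intmin}, I would track how the three relevant coordinates $v_i,v_{i+2},v_{i+4}$ add up and confirm that the argmax (with the minimal-index tie-break) is preserved. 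The remaining mixed subcases—one profile alternate and the other not, or one of the outputs an interval rather than a singleton—I would dispatch by checking against the six explicit cases of Lemma~\ref{lem:abcc6}, which pins down exactly when $v_i$ lies in $L(\pi)$ for an arbitrary ABC-function; since $L_6$ restricted to non-alternate profiles \emph{is} an ABC-function, those cases constrain the intersection tightly enough to match $L_6(\pi\rho)$.
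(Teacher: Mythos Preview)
Your overall strategy matches the paper's: reduce to $\pimi,\rhom$ via Lemma~\ref{lem:c6:intmin}, then case-split on which of the reduced profiles are alternate, invoking Lemma~\ref{lem:abcc6} for the non-alternate side and tracking the argmax-with-tiebreak on the alternate side. The ``both alternate'' and ``mixed'' subcases are handled essentially as you describe.

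There is, however, a genuine gap in your treatment of the ``both non-alternate'' subcase. You assert that when $\pimi$ and $\rhom$ are both non-alternate and $L_6(\pi)\cap L_6(\rho)\neq\varnothing$, the reduced concatenation $(\pi\rho)^\circ$ is again non-alternate (so that consistency of $\Med$ finishes the job). This is false. Take $\rhom=(v_0,v_0,v_2)$ and $\sigm=(v_0,v_0,v_4)$: both are non-alternate, $L_6(\rhom)=\Med(\rhom)=\{v_0\}$ and $L_6(\sigm)=\Med(\sigm)=\{v_0\}$, yet $(\rhom\sigm)^\circ=(v_0^4,v_2,v_4)$ is alternate. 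In the paper this sub-subcase is handled separately: using Lemma~\ref{lem:abcc6} one pins down that (up to symmetry) $\rhom=(v_i^a,v_{i+1}^b,v_{i+2}^c)$ with $a\ge b+c$, $c>0$, and $\sigm=(v_{i+4}^{d'},v_{i+5}^{e'},v_i^{a'})$ with $a'\ge d'+e'$, $d'>0$; one then computes $(\rhom\sigm)^\circ$ explicitly and checks that the $v_i$-coordinate strictly dominates the other two, so $L_6$ still returns $\{v_i\}$. Your plan needs this extra step; ``an alternate sum would force a disagreement'' is simply not true.
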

\begin{proof}
  Obviously, $L_6$ satisfies the axioms (A) and (B). It remains to
  show that $L_6$ also satisfies (C). Let $\pi=\rho\sigma$ for
  $\rho,\sigma\in V^*$.  We assert that if
  $L_6(\rho)\cap L_6(\sigma) \neq \varnothing$, then
  $L_6(\rho\sigma)=L_6(\rho)\cap L_6(\sigma)$.

  Suppose that
  $v_i \in L_6(\rho)\cap L_6(\sigma) = L_6(\rhom) \cap L_6(\sigm)$.
  By the definition of $L_6$ and Lemma~\ref{lem:abcc6}, this implies
  that $\sigm_{i+3} = \rhom_{i+3} = 0$. Consequently, either $\sigm$
  (respectively, $\rhom$) is not an alternate profile, or
  $\sigm_i, \sigm_{i+2}, \sigm_{i+4} > 0$ (respectively,
  $\rhom_i, \rhom_{i+2}, \rhom_{i+4} > 0$). By
  Lemma~\ref{lem:c6:intmin}, we have $\pimi_{i+3} = 0$ and
  consequently, either $\pimi$ is not an alternate profile, or
  $\pimi_{i}, \pimi_{i+2}, \pimi_{i+4} > 0$.

  If $\rhom$ and $\sigm$ are both alternate profiles, then $\pimi$ is
  an alternate profile and for $j \in \{i+2, i+4\}$, we have
  $\pimi_i = \rhom_i+\sigm_i \geq \rhom_{j}+\sigm_{j}$ where the
  equality holds only if $\rhom_i = \rhom_{j}$ and
  $\sigm_i = \sigm_{j}$. Note that if the equality holds, then
  $0 \leq i < j \leq 5$, since $L_6(\rho) = \{v_i\}$.  Consequently,
  either $\pimi_i > \pimi_j$ or $\pimi_i = \pimi_j$ and
  $0 \leq i < j \leq 5$. Therefore we have
  $L_6(\pi) = \{v_i\} = L_6(\rho)\cap L_6(\sigma)$ and we are done.

  If neither $\rhom$ nor $\sigm$ are alternate profiles, then
  $L_6(\rho) = L_6(\rhom) = \Med(\rho)$ and
  $L_6(\sigma) = L_6(\sigm) = \Med(\sigma)$. In this case,
  $L_6(\rho)\cap L_6(\sigma) = \Med(\rho) \cap \Med(\sigma) =
  \Med(\rho\sigma) = \Med(\pi)$. If $\pimi$ is not an alternate
  profile, then $L_6(\pi) = L_6(\pimi) = \Med(\pimi) = \Med(\pi)$ and
  we are done. If $\pimi$ is an alternate profile, then
  $\pimi_i, \pimi_{i+2}, \pimi_{i+4} > 0$. Since $\rhom$ is not an
  alternate profile and since $v_i \in L_6(\rhom)$, by
  Lemma~\ref{lem:c6:intmin}, either $\rhom_{i+2}=0$ or
  $\rhom_{i+4}=0$. Similarly, either $\sigm_{i+2} = 0$ or
  $\sigm_{i+4}=0$. Since
  $\rhom_{i+2} + \sigm_{i+2} \geq \pimi_{i+2} >0 $ and
  $\rhom_{i+4} + \sigm_{i+4} \geq \pimi_{i+4}>0$, we can assume
  without loss of generality that $\rhom_{i+2} > 0$, $\rhom_{i+4}= 0$,
  $\sigm_{i+2} =0$, and $\sigm_{i+4} > 0$. By Lemma~\ref{lem:abcc6},
  we can thus assume that $\rhom = (v_i^a,v_{i+1}^b,v_{i+2}^c)$ where
  $a >0, c > 0, b\geq 0$, and $a \geq b+c$, and that
  $\sigm = (v_{i+4}^{d'},v_{i+5}^{e'},v_i^{a'})$ where
  $a' > 0, d' > 0, e' \geq 0$, and $a' \geq d'+e'$. Consequently,
  since $\pimi$ is an alternate profile,
  $\pimi = (v_i^{a+a'},v_{i+2}^{c-e'},v_{i+4}^{d'-b})$. Since
  $a \geq c > 0$ and $a' \geq d' > 0$, we have $a+a' > c-e'$ and
  $a+a' > d'-b$, and thus
  $L_6(\pi) = L_6(\pimi) = \{v_i\} = \Med(\pi)$ and we are done.

  Suppose now that $\rhom$ is an alternate profile and that $\sigm$ is
  not an alternate profile. Then
  $\rhom = (v_i^a, v_{i+2}^c, v_{i+4}^d)$ with $a \geq c > 0$ and
  $a \geq d > 0$. Moreover if $a = c$ (respectively, $a = d$), then
  $0 \leq i < i+2 \leq 5$ (respectively, $0 \leq i < i+4 \leq 5$). For
  $\sigm$, we consider the different cases given by
  Lemma~\ref{lem:abcc6}. In all cases, we show that
  $L_6(\pi) = L_6(\pimi) = \{v_i\} = L_6(\rhom) \cap L_6(\sigm)$.

  In Case $(i)$ or $(iii)$, $\sigm = (v_i^{a'},v_{i+2}^{c'})$ with
  $a' > 0$ and $a'\geq c' \geq 0$, and
  $\pimi =(v_i^{a+a'}, v_{i+2}^{c+c'}, v_{i+4}^d)$ is an alternate
  profile. Note that $a + a' > d$, that $a + a'\geq c+c'$ and that
  when $a+a' = c+c'$, we have $a = c$ and thus $0 \leq i < i+2 \leq 5$
  since $L_6(\rhom) = \{v_i\}$. In any case, we have
  $L_6(\pimi) =\{v_i\}$.

  In Case $(ii)$ or $(v)$,
  $\sigm = (v_i^{a'},v_{i+1}^{b'},v_{i+2}^{c'})$ with
  $b'> 0, c'\geq 0$, and $a' \geq b'+c' $. In this case,
  $\rhom\sigm =(v_i^{a+a'}, v_{i+1}^{b'},v_{i+2}^{c+c'},
  v_{i+4}^d)$. If $b' \geq d$, then
  $\pimi = (v_i^{a+a'}, v_{i+1}^{b'-d},v_{i+2}^{c+c'})$ and
  $L_6(\pimi) = \{v_i\}$ since
  $a + a' \geq c + b' + c' > b'- d + c + c'$. If $ b' < d$, then
  $\pimi = (v_i^{a+a'},v_{i+2}^{c+c'}, v_{i+4}^{d-b'})$ is an
  alternate profile. Since $a+a' \geq c+c'+b' > c+c'$ and
  $a+a' > d > d-b'$, we also have $L_6(\pimi) = \{v_i\}$ in this case.

  In Case $(iv)$ or $(vi)$,
  $\sigm = (v_{i-1}^{e'},v_{i}^{a'},v_{i+1}^{b'})$ with
  $a'\geq 0, b' > 0, e' > 0$, $a' \geq e'-b'$, and $a' \geq b'-e'$.
  In this case,
  $\rhom\sigm = (v_{i-1}^{e'},v_i^{a+a'}, v_{i+1}^{b'},v_{i+2}^c,
  v_{i+4}^d)$. If $d > b'$ and $c > e'$, then
  $\pimi = (v_i^{a+a'}, v_{i+2}^{c-e'}, v_{i+4}^{d-b'})$ is an
  alternate profile and $L_6(\pimi) = \{v_i\}$ since $a+a' > c-e'$ and
  $a+a' > d-b'$. If $d \leq b'$ and $c > e'$, then
  $\pimi = (v_i^{a+a'}, v_{i+1}^{b'-d},v_{i+2}^{c-e'})$ and
  $L_6(\pi) = \{v_i\}$ since $a+a' \geq c + b'-e' > b'-d + c -e'$.
  For similar reasons, we have $L_6(\pi) = \{v_i\}$ if $d > b'$ and
  $c \leq e'$. If $d \leq b'$ and $c \leq e'$, then
  $\pimi =(v_{i-1}^{e'-c},v_i^{a+a'}, v_{i+1}^{b'-d})$. Without loss
  of generality, assume that $e' - c \leq b' -d$. Let
  $\tau_1 = (v_{i-1}^{e'-c}, v_{i+1}^{e'-c})$ and
  $\tau_2 = (v_i^{a+a'}, v_{i+1}^{b'-d-e'+c})$ and observe that
  $\pimi = \tau_1\tau_2$ if $e' > c$ and $\pimi = \tau_2$ if $e' = c$.
  If $e' > c$, then $L_6(\tau_1) = I(v_{i-1},v_{i+1})$.  Since
  $a+a' \geq c + b' -e' > c+ b'-e'-d$, we have
  $L_6(\tau_2) = \{v_i\}$. Thus, in any case, we have
  $L_6(\pimi) = L(\tau_2) = \{v_i\}$.
\end{proof}

\subsection{Benzenoid graphs are ABCT$_2$- and ABCE$_2$-graphs}
By~\cite[Proposition 70]{GpConMed}, benzenoids have $G^2$-connected
medians. However they are not modular graphs (triplets of vertices at
distance 2 in each 6-cycle do not have medians). Since, by
Proposition~\ref{th:L6-ABC} $C_6$ is not an ABC-graph, benzenoids are
not ABC-graphs (as a graph family). In this subsection, we show that
they are ABCT$_2$- and ABCE$_2$-graphs.

It was shown in~\cite{Ch_benzen} that any benzenoid graph $G=(V,E)$
can be isometrically embedded into the Cartesian product
$T_1\square T_2\square T_3$ of three trees $T_1,T_2,T_3$. Namely, let
$E_1, E_2$, and $E_3$ be the edges of $G$ on the three directions of
the hexagonal grid and let $G_i=(V,E\setminus E_i)$ be the graph
obtained from $G$ by removing the edges of $E_i, i=1,2,3$. The tree
$T_i$ has the connected components of $G_i$ as the set of vertices and
two such connected components $P$ and $P'$ are adjacent in $T_i$ if
and only if there is an edge $uv\in E_i$ with one end in $P$ and
another end in $P'$.  The isometric embedding
$\varphi: V\rightarrow T_1\square T_2\square T_3$ maps any vertex $v$
of $G$ to a triplet $(v_1,v_2,v_3)$, where $v_i$ is the connected
component of $G_i$ containing $v$, $i=1,2,3$~\cite{Ch_benzen}.
The inner faces of a benzenoid $G$ are called \emph{hexagons}.
They correspond to the hexagons of the underlying hexagonal grid.
A path $P$ of  $G$ is an \emph{incomplete hexagon} if $P$
is a path of length 3 that contains an edge from each class
$E_1,E_2,E_3$ and $P$ is not included in a hexagon of $G$.

Next we show that benzenoids are ABCE$_2$-graphs.  As a warmup, we
show that $C_6$ is an ABCE$_2$-graph. By Corollary~\ref{cor:not-alt},
we only have to deal with the profiles $\pi$ such that $\pi^\circ$ is
an alternate profile. We thus need the following lemma:

\begin{lemma}\label{lem-C6-E}
  If $L$ is an ABCE$_2$-function on $C_6$, then
  $L(v_i, v_{i+2}, v_{i+4}) = \{v_i, v_{i+2}, v_{i+4}\}$.
\end{lemma}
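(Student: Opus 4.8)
The plan is to prove the two inclusions separately, reducing everything to the observation that $v_iv_{i+2}v_{i+4}$ is an equilateral metric triangle of size $2$ (so that axiom (E$_2$) is applicable) together with Lemmas~\ref{lem-xinL} and~\ref{lem-inter-interval}. First I would record the relevant intervals of $C_6$: $I(v_i,v_{i+2})=\{v_i,v_{i+1},v_{i+2}\}$, $I(v_i,v_{i+4})=\{v_i,v_{i+5},v_{i+4}\}$, and $I(v_i,v_{i+3})=V$ for antipodal pairs. From the first two, $I(v_i,v_{i+2})\cap I(v_i,v_{i+4})=\{v_i\}$ and all three sides have length $2$, so $v_iv_{i+2}v_{i+4}$ is indeed an equilateral metric triangle of size $2$. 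The goal then splits into (a) no vertex of the opposite color class $\{v_{i+1},v_{i+3},v_{i+5}\}$ lies in $L(v_i,v_{i+2},v_{i+4})$, and (b) at least one of $v_i,v_{i+2},v_{i+4}$ does. Once (a) and (b) hold, (E$_2$) promotes the vertex from (b) to all three, and (a) supplies the reverse inclusion, giving equality.

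For (a), suppose some odd vertex, say $v_{i+1}$, lies in $L(v_i,v_{i+2},v_{i+4})$. The crucial move is the choice of the two-element subprofile. Using (A) I would write the profile as $L(v_{i+2},v_{i+4},v_i)$ and apply Lemma~\ref{lem-xinL} with $\pi=(v_{i+2},v_{i+4})$, $x=v_i$, $x'=v_{i+1}$, obtaining $v_{i+1}\in L(v_{i+2},v_{i+4},v_{i+1})=L(\pi,v_{i+1})$. Since $|\pi|=2$, Lemma~\ref{lem-inter-interval} forces $I(v_{i+1},v_{i+2})\cap I(v_{i+1},v_{i+4})=\{v_{i+1}\}$. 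But $v_{i+2}$ is a neighbor of $v_{i+1}$ lying on the geodesic $v_{i+1}v_{i+2}v_{i+3}v_{i+4}$ realizing $d(v_{i+1},v_{i+4})=3$, so $v_{i+2}\in I(v_{i+1},v_{i+2})\cap I(v_{i+1},v_{i+4})$ and $v_{i+2}\neq v_{i+1}$, a contradiction. Running the same computation for $v_{i+3}$ (with $\pi=(v_i,v_{i+4})$, whose common interval vertex is the neighbor $v_{i+4}$) and for $v_{i+5}$ (with $\pi=(v_{i+2},v_i)$, common vertex $v_i$) excludes the remaining two odd vertices. I would stress that these are three parallel instances of one calculation, not an appeal to symmetry of $L$, since a general ABC-function on $C_6$ need not commute with the automorphisms of $C_6$.

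Part (b) is then immediate: a consensus function is nonempty-valued, so $L(v_i,v_{i+2},v_{i+4})$ contains some vertex, which by (a) must be one of $v_i,v_{i+2},v_{i+4}$; applying (E$_2$) to the equilateral metric triangle $v_iv_{i+2}v_{i+4}$ with this vertex in the role of $u$ (reordering by (A)) yields $\{v_i,v_{i+2},v_{i+4}\}\subseteq L(v_i,v_{i+2},v_{i+4})$, and combined with (a) this gives the claimed equality. The one delicate point I expect to be the main obstacle is precisely the subprofile choice in (a): each odd vertex must be paired with its \emph{antipodal} even vertex (whose interval is all of $V$) while the remaining even vertex is extracted, because the alternative pairing by the two neighbors gives $I(v_{i+1},v_i)\cap I(v_{i+1},v_{i+2})=\{v_{i+1}\}$ and produces no contradiction.
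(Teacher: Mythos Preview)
Your proof is correct but takes a different route from the paper's. The paper argues directly with axioms (B) and (C): assuming $v_{i+1}\in L(\pi)$ where $\pi=(v_i,v_{i+2},v_{i+4})$, it forms $\pi'=\pi v_{i+1}$ and computes $L(\pi')$ in two ways. On one hand $L(\pi')=L(\pi)\cap L(v_{i+1})=\{v_{i+1}\}$; on the other, decomposing $\pi'$ as $(v_i,v_{i+2})(v_{i+1},v_{i+4})$ gives $L(\pi')=I(v_i,v_{i+2})\cap I(v_{i+1},v_{i+4})\ni v_{i+2}$, a contradiction. You instead invoke the general Lemmas~\ref{lem-xinL} and~\ref{lem-inter-interval}: the first pushes $v_{i+1}$ into $L(v_{i+2},v_{i+4},v_{i+1})$, and the second forces $I(v_{i+1},v_{i+2})\cap I(v_{i+1},v_{i+4})=\{v_{i+1}\}$, which fails since $v_{i+2}$ lies in both intervals. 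Your approach reuses machinery already established in the paper and makes explicit why one must run three parallel computations rather than appeal to symmetry of $L$; the paper's direct decomposition is a touch shorter and self-contained. Both conclude identically via (E$_2$) once the odd vertices are excluded.
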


\begin{proof}
  Let $\pi = (v_i, v_{i+2}, v_{i+4})$.  First, assume that $L(\pi)$
  contains $v_{i+1}$.  Let $\pi'=\pi v_{i+1}$. Then
  $L(\pi') = L(\pi) \cap \{v_{i+1}\} = \{v_{i+1}\}$.  Since
  $L(v_i, v_{i+2}) \cap L(v_{i+1}, v_{i+4})$ contains $v_{i+2}$, we
  have
  $L(\pi') = L(v_i,v_{i+2},v_{i+1},v_{i+4}) = L(v_i, v_{i+2}) \cap
  L(v_{i+1}, v_{i+4})$ and thus $v_{i+2} \in L(\pi')$, a
  contradiction.  For similar reasons,
  $v_{i+3},v_{i+5} \notin L(\pi)$.  Consequently,
  $L(v_i, v_{i+2}, v_{i+4}) \subseteq \{v_i, v_{i+2}, v_{i+4}\}$ and
  thus by (E$_2$), we have
  $L(v_i, v_{i+2}, v_{i+4}) = \{v_i, v_{i+2}, v_{i+4}\}$.
\end{proof}

By Corollary~\ref{cor:not-alt} and Lemma~\ref{lem-C6-E}, we obtain the
following result:

\begin{proposition}
  The 6-cycle $C_6$ is  an ABCE$_2$-graph.
\end{proposition}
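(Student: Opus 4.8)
The plan is to show that an arbitrary ABCE$_2$-function $L$ on $C_6$ agrees with $\Med$ on every profile $\pi$. First I would reduce to the reduced profile $\pi^\circ$. As already observed, $L(\pi)=L(\pi^\circ)$ for every ABC-function (adding antipodal pairs $\{v_i,v_{i+3}\}$ leaves $L$ unchanged since $L(v_i,v_{i+3})=I(v_i,v_{i+3})=V$); the same holds for $\Med$, because in $C_6$ one has $d(w,v_i)+d(w,v_{i+3})=3$ for every $w$, so each antipodal pair adds a constant to $F_\pi$ and does not move the minimizers. Hence it suffices to prove $L(\pi^\circ)=\Med(\pi^\circ)$. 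When $\pi^\circ$ is non-alternate (in particular when it is empty, in which case both $L(\pi)$ and $\Med(\pi)$ equal $V$), this is exactly Corollary~\ref{cor:not-alt}. So the only case left is $\pi^\circ$ alternate.

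In the alternate case, after relabeling I may assume $\pi^\circ=(v_0^a,v_2^b,v_4^c)$ with $a\ge b\ge c\ge 1$ and $\pi^\circ_1=\pi^\circ_3=\pi^\circ_5=0$. First I would compute $\Med(\pi^\circ)$ directly from the distance table of $C_6$: a short evaluation of $F_{\pi^\circ}(v_j)$ for $j=0,\dots,5$ shows the minimum is attained exactly at the even vertices of maximum weight, i.e. $\Med(\pi^\circ)=\{v_j : j\in\{0,2,4\},\ \pi^\circ_j=\max\{a,b,c\}\}$. Concretely this is $\{v_0\}$ if $a>b$, it is $\{v_0,v_2\}$ if $a=b>c$, and it is $\{v_0,v_2,v_4\}$ if $a=b=c$.

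To compute $L(\pi^\circ)$ I would use the decomposition $\pi^\circ=(v_0,v_2,v_4)^c\,(v_0,v_2)^{b-c}\,(v_0)^{a-b}$ and apply Consistency repeatedly. By Lemma~\ref{lem-C6-E} together with (C) on the $c$ copies, $L((v_0,v_2,v_4)^c)=\{v_0,v_2,v_4\}$; by Betweenness and (C), $L((v_0,v_2)^{b-c})=I(v_0,v_2)=\{v_0,v_1,v_2\}$; and $L((v_0)^{a-b})=\{v_0\}$ (using $L(v_0)=L(v_0,v_0)=I(v_0,v_0)=\{v_0\}$). The key point is that the running intersection always contains $v_0$, so Consistency is applicable at each step. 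Building the intersection up in this order therefore yields $\{v_0,v_2,v_4\}$ when only the first factor is present ($a=b=c$), $\{v_0,v_2\}$ when the first two are present ($a=b>c$), and $\{v_0\}$ once the singleton factor appears ($a>b$, with $(v_0,v_2)^{b-c}$ possibly empty). In every sub-case this matches $\Med(\pi^\circ)$, and combined with the non-alternate case this gives $L=\Med$.

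The individual computations are routine; the step I expect to need the most care — and the main obstacle — is the bookkeeping of the alternate case: verifying that the median computation correctly reads off the three tie patterns, and that the chosen decomposition of $\pi^\circ$ into balanced triples, a $2$-pair profile, and a singleton always has nonempty pairwise intersections (so that (C) applies) and reproduces $\Med$ in each sub-case, including the boundary cases $a=b$ and $b=c$ where some factors vanish.
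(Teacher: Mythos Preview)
Your proposal is correct and follows essentially the same approach as the paper: reduce to $\pi^\circ$ via antipodal pairs, invoke Corollary~\ref{cor:not-alt} for the non-alternate case, and in the alternate case decompose $\pi^\circ$ into a balanced triple (handled by Lemma~\ref{lem-C6-E}), a balanced $2$-pair (handled by (B)), and a singleton, then intersect using (C). The paper uses the same three-factor decomposition (with the opposite ordering convention $k_1\le k_2\le k_3$ in place of your $a\ge b\ge c$), so modulo relabeling your argument and the paper's coincide; your explicit verification of the median set and of the nonempty running intersections is slightly more detailed than the paper but not substantively different.
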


\begin{proof}
  Let $\pi$ be a profile on $C_6$.  By Corollary~\ref{cor:not-alt}, we
  have to consider only the case where $\pi^\circ$ is an alternate
  profile. We assume without loss of generality that
  $\pi^\circ = (u^{k_1},v^{k_2},w^{k_3})$, where $u,v,w$ are pairwise
  at distance 2 and $1 \le k_1\le k_2 \le k_3$.  Then
  $\pi^\circ =
  (u^{k_1},v^{k_1},w^{k_1},v^{k_2-k_1},w^{k_2-k_1},w^{k_3-k_2})$.  By
  Lemma~\ref{lem-C6-E}, $L(u^{k_1},v^{k_1},w^{k_1}) = \{u,v,w\}$.  If
  $k_2 > k_1$, then $L(v^{k_2 - k_1},w^{k_2 - k_1}) = I(v,w)$, and if
  $k_3 > k_2$, then $L(w^{k_3-k_2}) = \{w\}$. Consequently,
  $L(\pi^\circ) = \{u,v,w\}$ if $k_1 = k_2 = k_3$,
  $L(\pi^\circ) = \{v,w\}$ if $k_1 < k_2 = k_3$, and
  $L(\pi^\circ) = \{w\}$ if $k_1 \leq k_2 < k_3$. Consequently, for
  any ABCE$_2$ function and any profile $\pi$, we have
  $L(\pi) = L(\pi^\circ) = \Med(\pi^\circ)= \Med(\pi)$ and $\Med$ is
  the unique ABCE$_2$ function on $C_6$.
\end{proof}

To show that benzenoids are ABCE$_2$-graphs, we will need the
following known lemma:

\begin{lemma}[{\cite[Claim~71]{GpConMed}}]\label{lem-benz-gated}
  All hexagons and incomplete hexagons in benzenoids are gated.
\end{lemma}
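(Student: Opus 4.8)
The plan is to work through the isometric embedding $\varphi\colon V\to T_1\square T_2\square T_3$ of \cite{Ch_benzen} recalled above, under which $d_G(u,v)=\sum_{i=1}^3 d_{T_i}(u_i,v_i)$. First I would observe that a hexagon $H$ (resp.\ an incomplete hexagon $P$) projects onto a single edge of each tree: its two $E_i$-edges (resp.\ its unique $E_i$-edge) lie on one elementary cut, so in $T_i$ they determine one edge $a_ib_i$ whose removal splits $T_i$ into a side $T_i^{a_i}$ and a side $T_i^{b_i}$. Consequently $\varphi(H)$ (resp.\ $\varphi(P)$) is contained in the $3$-cube $Q=\{a_1,b_1\}\times\{a_2,b_2\}\times\{a_3,b_3\}$, and one checks directly that $\varphi(H)$ is the $6$-cycle of $Q$ omitting two antipodal vertices, while $\varphi(P)$ is a geodesic from one corner of $Q$ to the opposite one.

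The gate is then built coordinatewise. For $x\in V$ let $g_i(x)\in\{a_i,b_i\}$ be the endpoint of $a_ib_i$ on the same side as $x_i$ in $T_i$, i.e.\ the gate of $x_i$ onto the edge $a_ib_i$; since $T_i$ is a tree this is well defined. I would first prove the easy half: \emph{whenever the cube vertex $g(x)=(g_1(x),g_2(x),g_3(x))$ is the image of a vertex $h^\ast$ of $H$ (resp.\ $P$), then $h^\ast$ is the gate of $x$.} Indeed, for every $h$ in $H$ (resp.\ $P$) each coordinate satisfies $d_{T_i}(x_i,h_i)=d_{T_i}(x_i,g_i(x))+d_{T_i}(g_i(x),h_i)$ because $g_i(x)\in I_{T_i}(x_i,h_i)$; summing over $i$ and using additivity of the distance gives $d_G(x,h)=d_G(x,h^\ast)+d_G(h^\ast,h)$, so $h^\ast\in I(x,h)$ for all $h$, which is exactly the gate property.

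It remains---and this is the heart of the argument---to show that $g(x)$ always lands on a vertex of $H$ (resp.\ $P$); this is where planarity enters. I would use that the three elementary cuts attached to a hexagon are three straight lines meeting at its centre, so they partition the plane into six sectors; ``which side of the tree-edge $a_ib_i$'' coincides with ``which side of the $i$-th cut-line'', so the sign pattern $(g_1(x),g_2(x),g_3(x))$ of any vertex realizes one of these six sectors and hence one of the six vertices of $H$ (the two antipodal cube vertices omitted by $\varphi(H)$ correspond to exactly the two sign patterns forbidden by the concurrency of the three lines). For an incomplete hexagon $P$ the same three lines through the centre of the would-be hexagon are concurrent, again killing the two antipodal patterns; moreover, since $P$ is not contained in a hexagon and a benzenoid is an induced subgraph on a simply connected region bounded by a single cycle, the two missing vertices of the would-be hexagon, together with the whole sectors beyond $P$, lie outside the benzenoid. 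Thus no vertex of $G$ realizes the two sign patterns corresponding to those sectors either, so $g(x)$ is forced onto one of the four vertices of $P$, completing the proof.

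The step I expect to be the main obstacle is the geometric identification ``side of the tree-edge $=$ side of the cut-line'' together with the simple-connectivity argument excluding the two extra sectors for the incomplete-hexagon case: these require the standard but slightly delicate facts that elementary cuts of a benzenoid are convex cuts whose removal leaves exactly the two half-planes, and that any $6$-cycle present in a benzenoid must bound a face. Everything else (the cube structure and the coordinatewise gate) is a routine consequence of the additivity of the metric on $T_1\square T_2\square T_3$.
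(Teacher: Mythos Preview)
The paper does not prove this lemma at all: it is quoted verbatim as \cite[Claim~71]{GpConMed}, so there is no in-paper argument to compare against. Your task was therefore to supply an independent proof.

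Your reduction is sound: via the isometric embedding $\varphi$ into $T_1\square T_2\square T_3$, the subgraph $H$ (resp.\ $P$) projects to a single edge $a_ib_i$ in each $T_i$, the coordinatewise projection $g(x)=(g_1(x),g_2(x),g_3(x))$ is the gate of $\varphi(x)$ in the $3$-cube $Q$, and whenever $g(x)\in\varphi(H)$ (resp.\ $\varphi(P)$) it is indeed the gate of $x$. So everything hinges on proving that $g(x)$ never hits one of the forbidden cube vertices.

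The gap is in the geometric step. Your claim that ``side of the tree-edge $a_ib_i$'' coincides with ``side of the straight cut-line through the hexagon'' amounts to saying that the half $W_{uv}=\{x:d_G(x,u)<d_G(x,v)\}$ equals the intersection of the benzenoid with a Euclidean half-plane. That identification holds only if $G$ is an \emph{isometric} subgraph of the hexagonal grid, which is false for benzenoids with deep bays or fjords: whenever a single grid-strip meets the benzenoid in two disconnected pieces (think of a horseshoe-shaped region), the corresponding tree $T_i$ genuinely branches, the shortest $G$-path between vertices in the two pieces is forced to go around the bay, and $d_G\neq d_{\mathrm{grid}}$. In that situation the three tree-edges $a_ib_i$ do not correspond to three concurrent planar lines, so the ``three-lines-through-a-point give only six sectors'' argument does not apply, and neither does the ``sectors beyond $P$ lie outside the benzenoid'' claim for the incomplete-hexagon case. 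You flagged this step as the main obstacle, which is correct; but the ``standard facts'' you invoke (elementary cuts leave the two half-planes) are exactly what fails here.

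To repair the argument you must work intrinsically in the trees rather than in the plane: for instance, show directly that if $g(x)$ were one of the two forbidden antipodal patterns then $d_G(x,v_0)=d_G(x,v_2)=d_G(x,v_4)$ and $d_G(x,v_1)=d_G(x,v_3)=d_G(x,v_5)$ lead to a contradiction with the partial-cube structure of $G$, without appealing to planar half-planes.
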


Now we show that it suffices to prove that benzenoids are
ABCT$_2$-graphs:

\begin{lemma}\label{lem:benz-E2-T2}
  On benzenoids, ABCE$_2$-functions are ABCT$_2$-functions.
\end{lemma}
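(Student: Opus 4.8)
The plan is to show that any ABCE$_2$-function $L$ on a benzenoid $G$ already satisfies (T$_2$); since $L$ obeys (A), (B), (C) by hypothesis, this makes it an ABCT$_2$-function. Fix an equilateral metric triangle $uvw$ of size $2$. The first step is a structural observation: in a benzenoid such a triangle is exactly the alternate triple of a unique hexagon. Indeed, since $G$ has girth $6$, any $2$-pair has a unique common neighbour (its midpoint), so writing $a,b,c$ for the midpoints of $(u,v),(v,w),(w,u)$, the metric-triangle condition forces $u,a,v,b,w,c$ to be six distinct vertices forming a $6$-cycle $u\sim a\sim v\sim b\sim w\sim c\sim u$, i.e. a hexagonal face $H$; by Lemma~\ref{lem-benz-gated} this hexagon is gated, hence isometric. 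It then suffices to prove $L(u,v,w)\subseteq\{u,v,w\}$: as $L(u,v,w)\neq\varnothing$, this places at least one of $u,v,w$ in $L(u,v,w)$, and (E$_2$) (applied to that vertex, using the symmetry of the equilateral triangle) upgrades this to $\{u,v,w\}\subseteq L(u,v,w)$, giving (T$_2$).

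I would establish the inclusion $L(u,v,w)\subseteq\{u,v,w\}$ in two moves. First, $L(u,v,w)\subseteq V(H)$: for $x\in L(u,v,w)$, the remark after Lemma~\ref{lem-xinL} gives $x\in L(u,v,x)$, so Lemma~\ref{lem-inter-interval} (applied with the profile $(u,v)$, of length $2$) yields $I(x,u)\cap I(x,v)=\{x\}$; the gate $x'$ of $x$ in the gated set $H$ satisfies $x'\in I(x,u)\cap I(x,v)$, whence $x'=x$ and $x\in V(H)$. Second, none of $a,b,c$ lies in $L(u,v,w)$. Suppose $a\in L(u,v,w)$. On the one hand, since $L(a)=\{a\}$ (from (B) and (C)), axiom (C) gives $L(u,v,w,a)=L(u,v,w)\cap L(a)=\{a\}$. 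On the other hand, $H$ being isometric gives $d(w,a)=3=d(w,v)+d(v,a)$, so $v\in I(w,a)=L(w,a)$, and clearly $v\in I(u,v)=L(u,v)$; hence by (C), $v\in L(u,v)\cap L(w,a)=L(u,v,w,a)$, contradicting $L(u,v,w,a)=\{a\}$ since $v\neq a$. By symmetry $b,c\notin L(u,v,w)$, so $L(u,v,w)\subseteq V(H)\setminus\{a,b,c\}=\{u,v,w\}$, as required.

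The arithmetic in the second move mirrors Lemma~\ref{lem-C6-E} (where the antipodal hexagon vertex played the role of $a$), and the whole argument is the size-$2$ analogue of Lemma~\ref{lem:TC-T-E1}, with the gatedness of $H$ replacing the triangle condition. I expect the only genuinely delicate point to be the structural claim that every size-$2$ equilateral metric triangle is the alternate triple of a gated hexagon: one must verify that the midpoints $a,b,c$ are distinct from one another and from $u,v,w$ (guaranteed by the metric-triangle condition together with girth $6$), and that the resulting $6$-cycle is a hexagonal face of $G$ so that Lemma~\ref{lem-benz-gated} applies. Once this geometric fact is in place, only (B), (C), the two ABC-lemmas, and a single application of (E$_2$) are needed, so no further benzenoid-specific computation arises.
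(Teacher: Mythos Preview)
Your proof is correct and follows essentially the same approach as the paper's: both use the gatedness of the hexagon $H$ containing $u,v,w$ to reduce $L(u,v,w)$ to $\{u,v,w\}$ and then invoke (E$_2$). The only organizational difference is that you first confine any $x\in L(u,v,w)$ to $V(H)$ via Lemmas~\ref{lem-xinL} and~\ref{lem-inter-interval} and then rule out the midpoints, whereas the paper works directly with the gate $x'$ of a general $x$ (splitting into the cases $x'\in\{a,b,c\}$ and $x'\in\{u,v,w\}$); the midpoint contradiction via (C) and the final appeal to (E$_2$) are identical in both arguments.
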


\begin{proof}
  Let $u,v,w$ be a triplet of vertices of $G$ such that
  $d(u,v) = d(u,w) = d(v,w) = 2$, $I(u,v) \cap I(u,w) = \{u\}$,
  $I(u,v) \cap I(v,w) = \{v\}$, and $I(v,w) \cap I(w,u) = \{w\}$.
  Note that $u$, $v$, and $w$ are pairwise nonadjacent vertices of
  some hexagon $C$. We assert that $L(u,v,w)\subseteq \{u,v,w\}$.  Let
  $x \in L(u,v,w)$ and $\pi'=(x,u,v,w)$. Then
  $L(u,v,w,x) = L(u,v,w) \cap L(x)= \{x\}$.  Since $C$ is gated by
  Lemma~\ref{lem-benz-gated}, let $x'$ be the gate of $x$ in $C$.
  First assume that $x' \neq u,v,w$, say $x\sim u,v$.  Then $w$ is the
  opposite to $x'$ vertex of $C$ and $I(u,v) \subseteq L(x,w)$. Thus,
  $I(u,v) \subseteq L(u,v) \cap L(x,w) = L(\pi')$, a contradiction.
  Consequently, $x' \in \{u,v,w\}$, say $x' = u$.  Hence $u$ belongs
  to $L(x,v)$. Since $u$ in $L(u,w)$, we conclude that
  $u\in L(u,v,w,x) = \{x\}$, thus $u = x\in L(u,v,w)$. By (E$_2$),
  $\{u,v,w\} \subseteq L(u,v,w)$.
\end{proof}

\begin{lemma}\label{lem-benz-c6}
  Let $(u,v)$ be a 2-pair of a benzenoid $G$.  If there exists a
  profile $\pi$ such that $F_{\pi}$ is not pseudopeakless on $(u,v)$,
  then $u$ and $v$ belong to the same hexagon of $G$.
\end{lemma}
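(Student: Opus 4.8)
The plan is to exploit the isometric embedding $\varphi\colon V\to T_1\square T_2\square T_3$ of \cite{Ch_benzen} described above, under which distances add coordinatewise, so that $F_\pi=F^1_\pi+F^2_\pi+F^3_\pi$, where $F^i_\pi$ is the total-distance function of the projected profile on the tree $T_i$. Since each $T_i$ is a tree, every $F^i_\pi$ is convex, hence peakless, along $T_i$-geodesics. As $d(u,v)=2$, up to relabeling the trees either $u,v$ differ in a single coordinate (say $d_{T_1}(u_1,v_1)=2$) or in exactly two coordinates (say the $T_1$- and $T_2$-coordinates). In the first case the unique interior vertex $w=(m,u_2,u_3)$ (with $m$ the midpoint of $u_1,v_1$ in $T_1$) satisfies $2F_\pi(w)\le F_\pi(u)+F_\pi(v)$ by convexity of $F^1_\pi$, so $F_\pi$ is pseudopeakless on $(u,v)$, contrary to hypothesis. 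Thus $u$ and $v$ differ in two coordinates; write $\varphi(u)=(u_1,u_2,u_3)$ and $\varphi(v)=(v_1,v_2,u_3)$.

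Next I would pin down the local structure. Since benzenoids contain no $4$-cycles, a $2$-pair has at most one common neighbor, and as $d(u,v)=2$ it has exactly one, so $I^\circ(u,v)=\{w\}$ with $w=(v_1,u_2,u_3)$ (the vertex $(u_1,v_2,u_3)$ cannot also lie in $G$, else $u,w,v,(u_1,v_2,u_3)$ would be a $4$-cycle). Because $u\sim w\sim v$, for every profile vertex $y$ the increments $d(y,w)-d(y,u)$ and $d(y,w)-d(y,v)$ each lie in $\{-1,+1\}$, equal to $+1$ precisely when $y$ is closer to $u$, resp. to $v$, than to $w$. Non-pseudopeaklessness on $(u,v)$ means exactly $F_\pi(w)\ge F_\pi(u)$ and $F_\pi(w)\ge F_\pi(v)$ with at least one strict inequality. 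Writing $a,b,c,e$ for the numbers of profile vertices closer to $w$ in neither/only the $T_2$-direction/only the $T_1$-direction/both, these two inequalities read $a+b\ge c+e$ and $b+e\ge a+c$; summing gives $b\ge c\ge0$, and $b=0$ forces $a=e$ and both inequalities to be equalities, a contradiction. Hence $b\ge1$: there is a profile vertex $x$ strictly closer to both $u$ and $v$ than to $w$, so $d(x,u)=d(x,v)=d(x,w)-1$.

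Then I would build the hexagon by contradiction, assuming $u$ and $v$ lie in no common hexagon. Each vertex of a benzenoid has at most one incident edge in each class $E_1,E_2,E_3$; the $E_1$-edge at $u$ is $uw$. A neighbor of $u$ strictly closer to $x$ can therefore only be its $E_3$-neighbor moving toward $x_3$: the $E_1$-neighbor $w$ is farther from $x$ (since $x$ is on the $u_1$-side), and any $E_2$-neighbor $(u_1,u_2',u_3)$ is either $(u_1,v_2,u_3)\notin G$ or, for $u_2'\ne v_2$, farther from $x$ (in $T_2$ only $v_2$ lies toward $x_2$). Consequently the penultimate vertex of a geodesic from $x$ to $u$ is forced to be the $E_3$-neighbor $u^\sharp=(u_1,u_2,r')\in G$. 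The path $P=(u^\sharp,u,w,v)$ then uses one edge from each of $E_1,E_2,E_3$ and lies in the unique grid-hexagon through $u,w,v$; as that hexagon is not contained in $G$, the path $P$ is an \emph{incomplete hexagon}, hence gated by Lemma~\ref{lem-benz-gated}.

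Finally, let $g\in P$ be the gate of $x$. From $g\in I(x,u)\cap I(x,v)\cap I(x,w)$ and $d(x,w)=d(x,u)+1=d(x,v)+1$ I would subtract the gate equalities to get $d(g,w)=d(g,u)+1=d(g,v)+1$ and $d(g,u)=d(g,v)$. A direct check of the four vertices of $P$ shows none can satisfy these relations (using $d(u,v)=2$, $d(v,w)=1$, $d(u^\sharp,u)=1$, $d(u^\sharp,v)=3$), contradicting the existence of the gate. Therefore $u$ and $v$ belong to a common hexagon. I expect the main obstacle to be the third step: correctly extracting from non-pseudopeaklessness a single profile vertex $x$ witnessing the $C_6$-type obstruction, and then using the elementary honeycomb degree constraints to certify that $P$ is genuinely an incomplete hexagon, so that the gatedness lemma applies and the gate of $x$ delivers the contradiction.
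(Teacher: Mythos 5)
Your proof is correct and follows essentially the same route as the paper's: extract from non-pseudopeaklessness a profile vertex $x$ with $d(x,u)=d(x,v)=d(x,w)-1$, use the tree coordinates to force the neighbor of $u$ towards $x$ to be its $E_3$-neighbor, and apply the gatedness of (incomplete) hexagons (Lemma~\ref{lem-benz-gated}) to the length-$3$ path $(u^\sharp,u,w,v)$ containing one edge of each class. The only blemish is that the verbal labels attached to your counts $a,b,c,e$ are permuted relative to the algebra (your ``$b$'' must be the number of profile vertices closer to both $u$ and $v$ than to $w$, i.e.\ closer to $w$ in \emph{neither} tree direction), but the two inequalities and the conclusion $b\ge 1$ are exactly the right ones.
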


\begin{proof}
  Let $w$ be a common neighbor of $u$ and $v$ and let $x$ be a vertex
  such that $2d(w,x)>d(u,x)+d(v,x)$.  Since benzenoids are bipartite,
  $d(u,x)=d(v,x)=d(w,x)-1$ and since incident edges of $G$ are in
  different classes, we can suppose that $wu\in E_1$ and $wv\in
  E_2$. Let $y$ be a neighbor of $u$ in $I(u,x)$. Then $uy$ cannot
  belong to $E_2$, otherwise in $T_2$ the vertex $w_2$ would have two
  neighbors $y_2$ and $v_2$ belonging to $I(w_2, x_2)$, contrary to
  the fact that $T_2$ is a tree.  Hence $uy\in E_3$. Note that
  $(y,u,w,v)$ is not gated. By Lemma~\ref{lem-benz-gated}, $(y,u,w,v)$
  is not an incomplete hexagon, whence $u,v$ belongs to the same hexagon.
\end{proof}

The next lemma follows from the fact that every vertex of a hexagon
$C$ belongs to the interval between two opposite vertices of $C$.

\begin{lemma}\label{lem-benz-opp}
  Let $C$ be a hexagon of a benzenoid $G$ and let $\overline{x}$ be
  the vertex of $C$ opposite to the gate of a vertex $x$ in $C$. Then
  every vertex of $C$ belong to $L(x,\overline{x})$.
\end{lemma}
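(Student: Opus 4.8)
The plan is to reduce the statement to the combinatorial fact quoted just before the lemma by exploiting that $C$ is gated. Since $L$ is in particular an ABC-function, axiom (B) gives $L(x,\overline{x}) = I(x,\overline{x})$, so it suffices to show that every vertex $y$ of $C$ satisfies $d(x,y) + d(y,\overline{x}) = d(x,\overline{x})$, i.e. $y \in I(x,\overline{x})$.

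First I would invoke Lemma~\ref{lem-benz-gated}, which guarantees that the hexagon $C$ is gated, and let $x'$ denote the gate of $x$ in $C$; by definition $\overline{x}$ is the vertex of $C$ opposite to $x'$, so that $d(x',\overline{x}) = 3$. The gate property gives $x' \in I(x,y)$ for every $y \in C$, that is, $d(x,y) = d(x,x') + d(x',y)$; taking $y = \overline{x}$ yields $d(x,\overline{x}) = d(x,x') + 3$.

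Next I would use that a gated subgraph is isometric, so distances inside $C$ agree with distances in $G$. Since $x'$ and $\overline{x}$ are opposite vertices of the hexagon $C$, the quoted fact that every vertex of a $6$-cycle lies on a shortest path between two opposite vertices gives $y \in I(x',\overline{x})$ for every $y \in C$, i.e. $d(x',y) + d(y,\overline{x}) = 3$. Combining the two displayed equalities,
\[
  d(x,y) + d(y,\overline{x}) = d(x,x') + d(x',y) + d(y,\overline{x}) = d(x,x') + 3 = d(x,\overline{x}),
\]
so $y \in I(x,\overline{x}) = L(x,\overline{x})$, as desired.

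There is no real obstacle here: the only point to verify carefully is that the gate $x'$ of $x$ and the opposite vertex $\overline{x}$ together split every $y \in C$ additively, and this is immediate once the gate property and the isometry of $C$ are in place. The argument is moreover uniform regardless of whether $x$ itself lies on $C$, since if $x \in C$ then $x' = x$ and the claim reduces verbatim to the quoted fact.
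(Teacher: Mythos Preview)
Your proof is correct and follows exactly the route the paper indicates: the paper simply states that the lemma ``follows from the fact that every vertex of a hexagon $C$ belongs to the interval between two opposite vertices of $C$,'' and you have filled in the details by combining this fact with the gate property (Lemma~\ref{lem-benz-gated}) and axiom~(B). There is nothing to add.
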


The next lemma follows from Lemma~\ref{lem-benz-gated}:

\begin{lemma}\label{lem-benz-Lux}
  Let $C$ be a hexagon of a benzenoid $G$ and let $(u,v)$ be a 2-pair
  of $C$. Then for any vertex $z$ of $G$ having $v$ as the gate in
  $C$, we have $u,v \in L(u,z)=I(u,z)$.
\end{lemma}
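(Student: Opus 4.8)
The plan is to reduce the claim to a direct consequence of the gate property, after disposing of the equality by invoking axiom (B). Since $(u,z)$ is a profile with exactly two entries, (B) gives $L(u,z)=I(u,z)$ immediately, so the stated equality is automatic and it remains only to verify the two memberships $u,v\in I(u,z)$.

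The inclusion $u\in I(u,z)$ is trivial, as every endpoint lies in its own interval. For $v$, I would invoke that $C$ is gated, which is exactly Lemma~\ref{lem-benz-gated}. Since $(u,v)$ is a 2-pair of $C$, the vertex $u$ is a vertex of $C$; and since $v$ is by hypothesis the gate of $z$ in $C$, the defining property of a gate yields $v\in I(z,y)$ for every $y\in C$, in particular $v\in I(z,u)=I(u,z)$, that is, $d(u,v)+d(v,z)=d(u,z)$. Combining this with the trivial inclusion for $u$ gives $u,v\in I(u,z)=L(u,z)$, as required.

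I expect no genuine obstacle here: the statement is a one-step corollary of gatedness (Lemma~\ref{lem-benz-gated}) together with axiom (B). The only point that needs to be noted explicitly is that $u$ lies in $C$, which is part of the hypothesis that $(u,v)$ is a 2-pair \emph{of} $C$; once that is observed, the gate property applies to $u$ and delivers $v\in I(u,z)$ with nothing further to compute.
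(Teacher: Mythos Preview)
Your proof is correct and follows exactly the approach indicated in the paper, which simply states that the lemma follows from Lemma~\ref{lem-benz-gated}. You have spelled out the details: axiom (B) gives $L(u,z)=I(u,z)$, and since $u\in C$ and $v$ is the gate of $z$ in the gated set $C$, the gate property yields $v\in I(u,z)$.
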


The proof of the next lemma is similar to the proof of
Lemma\ref{lem-Wuveq}:

\begin{lemma}\label{lem-benz-Luvx}
  Let $C$ be a hexagon of a benzenoid $G$ and let $u,v,w$ be three
  vertices of $C$ at pairwise distance $2$. Then for any vertex $z$ of
  $G$ that has $w$ as the gate in $C$, we have $u,v \in L(u,v,z)$.
\end{lemma}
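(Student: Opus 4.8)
The plan is to mimic the proof of Lemma~\ref{lem-Wuveq}, replacing the triangle condition by the gatedness of hexagons (Lemma~\ref{lem-benz-gated}) and the axiom (T) by (T$_2$). Throughout, let $a$ denote the common neighbor of $u$ and $v$ in $C$; it is unique because benzenoids have girth $6$, and it is the vertex of $C$ opposite to $w$, so $d(a,w)=3$. Since $z$ has gate $w$ in $C$, the gate property gives $w\in I(z,y)$ for every $y\in C$; in particular $w\in I(z,u)$, $w\in I(z,v)$ and $w\in I(z,a)$ (the last because $d(z,a)=d(z,w)+d(w,a)=d(z,w)+3$). Moreover $u,v,w$ are three vertices of the gated hexagon $C$ at pairwise distance $2$, so they form an equilateral metric triangle of size $2$ and (T$_2$) yields $\{u,v,w\}\subseteq L(u,v,w)$.

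First I would establish the analogue of Claim~\ref{claim-wLuvz}: that $u,v,w$ are simultaneously in or out of $L(u,v,z)$. The tool is a two-fold splitting of the profile $(u,v,z,u,w)$. Reading it as $(u,v,w)+(u,z)$ and using that $u,w\in L(u,v,w)$ (by (T$_2$)) together with $u,w\in I(u,z)=L(u,z)$ (since $w\in I(u,z)$), consistency gives $u,w\in L(u,v,z,u,w)$. Reading the same profile as $(u,v,z)+(u,w)$, consistency then shows that if $u\in L(u,v,z)$ then $w\in L(u,v,z)$. The symmetric computation on $(u,v,z,v,w)$ gives $w\in L(u,v,z)\Rightarrow v\in L(u,v,z)$, and the mirror implications (swapping $u$ and $v$) close the equivalence $u\in L(u,v,z)\iff v\in L(u,v,z)\iff w\in L(u,v,z)$.

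Next I would pick any $z'\in L(u,v,z)$; by Lemma~\ref{lem-xinL} we have $z'\in L(u,v,z')\subseteq L(u,v,z)$, and by Lemma~\ref{lem-inter-interval} we have $I(u,z')\cap I(v,z')=\{z'\}$. If $z'\in\{u,v,w\}$, the Claim immediately gives $u,v\in L(u,v,z)$. Otherwise I locate $z'$ geometrically: since $C$ is gated, the gate $g$ of $z'$ in $C$ satisfies $g\in I(z',u)\cap I(z',v)=\{z'\}$, whence $g=z'$ and therefore $z'\in C$. Among the three vertices of $C$ other than $u,v,w$, the common neighbor of $u$ and $w$ (respectively of $v$ and $w$) fails the condition $I(u,z')\cap I(v,z')=\{z'\}$, because $u$ (respectively $v$) then lies in that intersection; hence $z'=a$.

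Finally, for the case $z'=a$ I would run a second two-fold splitting, this time of $(u,v,z,a,w)$. Read as $(u,v,w)+(a,z)$ it equals $L(u,v,w)\cap I(a,z)$ and contains $w$ (using $w\in L(u,v,w)$ and $w\in I(a,z)$); read as $(u,v,z)+(a,w)$ it equals $L(u,v,z)\cap I(a,w)$, where consistency applies because $a\in L(u,v,z)$. Comparing the two expressions forces $w\in L(u,v,z)$, and the Claim again yields $u,v\in L(u,v,z)$. The hard part will be the geometric localization of $z'$: because benzenoids are not meshed, the triangle-condition argument of Lemma~\ref{lem-Wuveq} is unavailable, and it is the gatedness of the hexagon that replaces it, pinning $z'$ down to the single vertex $a$ so that the splitting technique can finish the proof.
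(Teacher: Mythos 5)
Your proof is correct and follows essentially the same route as the paper's: the same claim (all of $u,v,w$ in or out of $L(u,v,z)$) proved by splitting $(u,v,z,u,w)$ two ways using (T$_2$) and the gate property, then Lemmas~\ref{lem-xinL} and~\ref{lem-inter-interval} together with gatedness of the hexagon to pin $z'$ down to $\{u,v,w,a\}$ where $a=\bar{w}$ is the common neighbor of $u$ and $v$. The only (valid) variation is the endgame for $z'=a$: you split $(u,v,z,a,w)$ two ways to conclude $w\in L(u,v,z)$ directly and invoke the claim, whereas the paper splits $(u,v,z,z')$ to force $\{z'\}=L(u,v,z,z')\supseteq I(u,v)$, a contradiction showing the case $z'=a$ cannot occur --- both arguments work.
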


\begin{proof}
  We start with the following claim:
  \begin{claim}\label{claim-benz-wLuvz}
    Either $u,v,w \in L(u,v,z)$ or $u,v,w \notin L(u,v,z)$.
  \end{claim}
  \begin{proof}
    The proof is similar to the proof of Claim~\ref{claim-wLuvz},
    using (T$_2$) instead of (T).  By symmetry it suffices to show
    that $u \in L(u,v,z)$ if and only if $w \in L(u,v,z)$.  Suppose
    that $\{u,w\} \cap L(u,v,z) \neq \varnothing$ and consider the
    profile $(u,v,z,u,w)$. Since $u,w \in I(u,w) = L(u,w)$ and
    $\{u,w\} \cap L(u,v,z) \neq \varnothing$, we have
    $L(u,v,z,u,w) = I(u,w) \cap L(u,v,z)$.  By (T$_2$),
    $\{u,v,w\} \subseteq L(u,v,w)$ and since $L(u,z) = I(u,z)$, we
    have
    $u,w \in L(u,v,w) \cap L(u,z) = L(u,v,z,u,w) = I(u,w) \cap
    L(u,v,z)$. Consequently, $u \in L(u,v,z)$ iff $w \in L(u,v,z)$.
    This ends the proof of the claim.
  \end{proof}

  If $u \in L(u,v,z)$ or $v\in L(u,v,z)$, by
  Claim~\ref{claim-benz-wLuvz} we are done.  If $u,v \notin L(u,v,z)$,
  then let $z' \in L(u,v,z)$.  By Lemma~\ref{lem-xinL},
  $z'\in L(u,v,z')$ and by Lemma~\ref{lem-inter-interval},
  $I(u,z')\cap I(v,z')=\{z'\}$.  Since $d(u,v)=2$ and $G$ is
  bipartite, this implies that $d(u,z')=d(v,z')$. By
  Lemma~\ref{lem-benz-gated}, every $C_6$ is gated, and then either
  $z'=w$ and we are done by Claim~\ref{claim-benz-wLuvz}, or
  $z'\sim u,v$.  We show that the second case cannot happen by
  considering the profile $(u,v,z,z')$. If $z' \sim u,v$, then
  $z' =\bar{w}$ and
  $I(u,v) = L(u,v) \subseteq L(z',w) \subseteq L(z',z)$. Consequently,
  if $z' \sim u,v$, we have $z' \in L(u,v,z) \cap L(z') = \{z'\}$ and
  $z' \in L(u,v)\cap L(z',z) = L(u,v) = I(u,v)$. This implies that
  $\{z'\} = L(u,v,z,z') = I(u,v)$, a contradiction. This ends the
  proof of the lemma.
\end{proof}

Let $C=(v_0, v_1, v_2, v_3, v_4, v_5)$ be a hexagon of $G$ and
$\pi_C^i$ be the restriction of a profile $\pi$ to the set of all
vertices of $G$ that have $v_i$ as their gate in $C$.  By convenience,
we consider every index $i$ of $v_i$ modulo $6$.

\begin{lemma}\label{lem-benz-pi'}
  For any profile $\pi$ on $G$, consider the profile
  $\pi' =\pi, v_0^{l_1+l_2+l_3+l_4}, v_2^{l_0+l_1+l_4+l_5}, v_4^{l_1}$
  where $l_i = |\pi_C^i|$. Then $v_0, v_2 \in L(\pi')$.
\end{lemma}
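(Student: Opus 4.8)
The plan is to mimic the concatenation arguments of Lemmas~\ref{lem-Lpiprime} and~\ref{lem-bip-Lpiprime}, now partitioning $\pi$ according to the gate of each of its vertices in the gated hexagon $C$ (gatedness is Lemma~\ref{lem-benz-gated}). Since every vertex of $G$ has a unique gate among $v_0,\dots,v_5$, the subprofiles $\pi_C^0,\dots,\pi_C^5$ partition $\pi$, so $\pi=\pi_C^0\pi_C^1\cdots\pi_C^5$ up to the reordering permitted by (A). For each gate class I will exhibit, for every vertex $x$ in it, a short subprofile built from $x$ together with a few copies of $v_0,v_2,v_4$ whose $L$-value contains both $v_0$ and $v_2$. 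Concatenating all these subprofiles and applying (C) repeatedly (the running intersection always contains $\{v_0,v_2\}$, hence is nonempty) yields $v_0,v_2\in L$ of the concatenation, and (A) identifies that concatenation with $\pi'$.

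First I would fix the pairings, using that $v_0,v_2,v_4$ are pairwise at distance $2$ and that the vertex of $C$ opposite $v_i$ is $v_{i+3}$. For $x\in\pi_C^0$, Lemma~\ref{lem-benz-Lux} applied to the $2$-pair $(v_2,v_0)$ gives $v_0,v_2\in L(v_2,x)$, so I pair $x$ with one copy of $v_2$; symmetrically, for $x\in\pi_C^2$, Lemma~\ref{lem-benz-Lux} on $(v_0,v_2)$ gives $v_0,v_2\in L(v_0,x)$, pairing $x$ with one $v_0$. For $x\in\pi_C^4$, Lemma~\ref{lem-benz-Luvx} applied to the triple $v_0,v_2,v_4$ gives $v_0,v_2\in L(v_0,v_2,x)$, contributing one $v_0$ and one $v_2$. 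For the opposite classes I use Lemma~\ref{lem-benz-opp}: for $x\in\pi_C^3$ the opposite vertex is $v_0$, so $v_0,v_2\in L(x,v_0)$ (one $v_0$); for $x\in\pi_C^5$ the opposite vertex is $v_2$, so $v_0,v_2\in L(x,v_2)$ (one $v_2$).

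The only class needing care is $\pi_C^1$, whose opposite vertex is $v_4$: Lemma~\ref{lem-benz-opp} gives only $v_0,v_2\in L(x,v_4)$, which supplies a copy of $v_4$ but none of $v_0,v_2$, whereas the target multiplicities of $v_0$ and $v_2$ in $\pi'$ exceed the counts accumulated so far ($l_2+l_3+l_4$ and $l_0+l_4+l_5$) by exactly $l_1$ each. To repair this I augment the subprofile to $(x,v_0,v_2,v_4)$: since $v_0,v_2\in L(v_0,v_2)=I(v_0,v_2)$ by (B), combining with $v_0,v_2\in L(x,v_4)$ through (C) gives $v_0,v_2\in L(x,v_0,v_2,v_4)$, now contributing one copy each of $v_0,v_2,v_4$. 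Tallying over all six classes, the added copies of $v_0$ total $l_1+l_2+l_3+l_4$, those of $v_2$ total $l_0+l_1+l_4+l_5$, and those of $v_4$ total $l_1$, matching $\pi'$ exactly. The main obstacle is purely this multiplicity bookkeeping, and in particular recognizing that the $v_1$-class must be padded with $v_0$ and $v_2$ rather than treated by $(x,v_4)$ alone.
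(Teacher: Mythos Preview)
Your proof is correct and follows essentially the same approach as the paper: partition $\pi$ by gate in $C$, apply Lemmas~\ref{lem-benz-opp}, \ref{lem-benz-Lux}, \ref{lem-benz-Luvx} to the respective classes, and then use (A) and (C) to assemble $\pi'$. The only cosmetic difference is that the paper handles class~$1$ by taking $L(x,v_4)$ for each $x\in\pi_C^1$ and then appending a single block $L(v_0^{l_1},v_2^{l_1})$, whereas you bundle the extra $v_0,v_2$ with each such $x$ via $L(x,v_0,v_2,v_4)=L(x,v_4)\cap L(v_0,v_2)$; the resulting concatenation and the multiplicity count are identical.
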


\begin{proof}
  Let $\pi_C^i=(x_{i,1}, x_{i,2}, \ldots ,x_{i,l_i})$ for
  $i\in \{0, 1, \ldots, 5\}$.  By Lemma~\ref{lem-benz-opp},
  $v_0,v_2 \in \bigcap_{j=1}^{l_1} L(x_{1,j}, v_4)$,
  $v_0,v_2 \in \bigcap_{j=1}^{l_3} L(x_{3,j}, v_0)$ and
  $v_0,v_2 \in \bigcap_{j=1}^{l_5} L(x_{5,j}, v_2)$.  By
  Lemma~\ref{lem-benz-Lux},
  $v_0,v_2\in \bigcap_{j=1}^{l_0} L(x_{0,j}, v_2)$ and
  $v_0,v_2 \in \bigcap_{j=1}^{l_2} L(x_{2,j}, v_0)$.  By
  Lemma~\ref{lem-benz-Luvx},
  $v_0,v_2 \in \bigcap_{j=1}^{l_4} L(x_{4,j}, v_2, v_0)$.  Observe
  also that $v_0,v_2 \in L(v_0^{l_1},v_2^{l_1})$.  Consequently,
  $v_0,v_2 \in L(\pi')$.
\end{proof}

\begin{lemma}\label{lem-benz-F}
  Let $(v_0,v_2)$ be a 2-pair of a hexagon $C=(v_0,v_1,\ldots ,v_5)$
  of a benzenoid $G$.  Then for any profile $\pi$ on $G$, the
  following holds:
  \begin{enumerate}[(1)]
  \item if $F_{\pi}(v_0) = F_{\pi}(v_2)$, then $v_0 \in L(\pi)$ iff
    $v_2 \in L(\pi)$.
  \item if $F_{\pi}(v_0) >F_{\pi}(v_2)$, then $v_0 \notin L(\pi)$.
  \end{enumerate}
\end{lemma}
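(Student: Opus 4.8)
The plan is to mirror the arguments of Lemmas~\ref{lem-L-wconv} and~\ref{lem-bip-L-wconv}, using the profile $\pi'$ produced by Lemma~\ref{lem-benz-pi'} as an anchor (it already guarantees $v_0,v_2\in L(\pi')$) and decomposing the vertices that $\pi'$ adds to $\pi$ into blocks whose $L$-value contains both $v_0$ and $v_2$.

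First I would rewrite the quantity $F_{\pi}(v_0)-F_{\pi}(v_2)$ in terms of the gate-class sizes $l_i=|\pi_C^i|$. Since $C$ is gated (Lemma~\ref{lem-benz-gated}), each vertex $x$ of $\pi$ has a gate $v_i$ in $C$ with $d(x,v_j)=d(x,v_i)+d(v_i,v_j)$ for every $j$; hence $d(x,v_0)-d(x,v_2)=d(v_i,v_0)-d(v_i,v_2)$ depends only on the class of $x$. Reading off these six differences ($-2,0,+2,+2,0,-2$ for $i=0,\dots,5$) yields
\[
  F_{\pi}(v_0)-F_{\pi}(v_2)=2(l_2+l_3-l_0-l_5)=2(A-B),
\]
where $A=l_1+l_2+l_3+l_4$ and $B=l_0+l_1+l_4+l_5$ are precisely the exponents of $v_0$ and $v_2$ in $\pi'=\pi,v_0^{A},v_2^{B},v_4^{l_1}$. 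Thus $F_{\pi}(v_0)=F_{\pi}(v_2)$ corresponds to $A=B$, and $F_{\pi}(v_0)>F_{\pi}(v_2)$ to $A>B$.

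The heart of the proof is the decomposition of the added part $v_0^{A},v_2^{B},v_4^{l_1}$ into blocks, each with $v_0,v_2$ in its $L$-value: (i) $l_1$ copies of the triple $(v_0,v_2,v_4)$, which is an equilateral metric triangle of size $2$ because $C$ is gated hence convex, so its pairwise intervals stay inside $C$ and meet only in the common endpoints; by (T$_2$) we get $v_0,v_2\in L(v_0,v_2,v_4)$ and then $v_0,v_2\in L(v_0^{l_1},v_2^{l_1},v_4^{l_1})$ by (A) and (C); (ii) $\min(A,B)-l_1$ copies of the pair $(v_0,v_2)$, with $L(v_0,v_2)=I(v_0,v_2)\ni v_0,v_2$ by (B); and, only when $A>B$, (iii) the surplus $v_0^{A-B}$, with $L(v_0^{A-B})=\{v_0\}$.

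Finally I would finish by the case analysis of Lemma~\ref{lem-L-wconv}. If $A=B$ and $v_0\in L(\pi)$, then $v_0$ lies in the $L$-value of every block, so by (C) it lies in $L(\rho)$ for the concatenation $\rho$ of blocks (i) and (ii), whence $L(\pi')=L(\pi)\cap L(\rho)\subseteq L(\pi)$; since $v_2\in L(\pi')$ by Lemma~\ref{lem-benz-pi'}, we conclude $v_2\in L(\pi)$, and the same computation with the roles exchanged gives the converse, proving~(1). If $A>B$ and $v_0\in L(\pi)$, then appending block (iii) forces $L(\rho)=\{v_0\}$ and hence $L(\pi')=L(\pi)\cap\{v_0\}=\{v_0\}$, contradicting $v_2\in L(\pi')$; this proves~(2). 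I expect the only genuinely delicate points to be verifying the per-class distance differences and that their weighted sum is exactly $2(A-B)$ with the same $A,B$ appearing in Lemma~\ref{lem-benz-pi'}, and checking that $(v_0,v_2,v_4)$ is a bona fide size-$2$ metric triangle so that (T$_2$) is applicable; the remaining steps are routine consistency bookkeeping.
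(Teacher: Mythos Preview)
Your proposal is correct and follows essentially the same approach as the paper's proof: the paper decomposes the added part of $\pi'$ into the three blocks $L(v_0^{l_1},v_2^{l_1},v_4^{l_1})$, $L(v_0^{p},v_2^{p})$ (respectively $L(v_0^{r},v_2^{r})$), and $L(v_0^{q-r})$, which are exactly your blocks (i), (ii), (iii) rewritten with $p=q=A-l_1$ and $r=B-l_1$. Your explicit verification that $v_0v_2v_4$ is a genuine size-$2$ metric triangle (via gatedness/convexity of $C$) and your computation of the per-class differences $(-2,0,2,2,0,-2)$ are details the paper leaves implicit, so your write-up is slightly more complete in those spots.
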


\begin{proof}
  For any $0 \le i \le 5$, let $l_i = |\pi_C^i|$.  Assume that
  $F_{\pi}(v_0)\geq F_{\pi}(v_2)$ and observe that
  $F_{\pi}(v_0) - F_{\pi}(v_2) = 2(l_2+l_3) - 2 (l_0+l_5)$.  As in
  Lemma~\ref{lem-benz-pi'}, consider the profile
  $\pi'=(\pi, v_0^{l_1+l_2+l_3+l_4},v_2^{l_0+l_1+l_4+l_5},
  v_4^{l_1})$.  Suppose that $F_{\pi}(v_0) = F_{\pi}(v_2)$ (then
  $l_2+l_3+l_4=l_0+l_4+l_5=p$) and that $v_0\in L(\pi)$.  Since
  $v_0\in L(v_0^p,v_2^p)=L(v_0,v_2)$, since
  $v_0\in L(v_0^{l_1}, v_2^{l_1}, v_4^{l_1})=L(v_0,v_2,v_4)$ by
  (T$_2$), and since $v_0 \in L(\pi)$, we have
  $L(\pi') = L(\pi) \cap L(v_0^p,v_2^p) \cap L(v_0^{l_1}, v_2^{l_1},
  v_4^{l_1})$. By Lemma~\ref{lem-benz-pi'}, $v_2 \in L(\pi')$, and
  thus $v_2 \in L(\pi)$.  Suppose now that
  $F_{\pi}(v_0) > F_{\pi}(v_2)$ (then $q=l_2+l_3+l_4>l_0+l_4+l_5=r$).
  Since $v_0 \in L(v_0^r,v_2^r)=L(v_0,v_2)$,
  $v_0\in L(v_0^{l_1}, v_2^{l_1}, v_4^{l_1})=L(v_0,v_2,v_4)$ by
  (T$_2$) and $v_0 \in L(v_0^{q-r})=L(v_0)=\{v_0\}$, then
  $L(\pi') = L(\pi) \cap L(v_0^r,v_2^r) \cap L(v_0^{l_1}, v_2^{l_1},
  v_4^{l_1}) \cap L(v_0^{q-r}) = \{v_0\}$ if $v_0 \in L(\pi)$,
  contradicting Lemma~\ref{lem-benz-pi'}.
\end{proof}

\begin{theorem}
  Benzenoids are ABCT$_2$-graphs and ABCE$_2$-graphs.
\end{theorem}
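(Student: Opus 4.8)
The plan is to reduce both claims to the single statement that every ABCT$_2$-function $L$ on a benzenoid $G$ coincides with $\Med_G$ on every profile. By Lemma~\ref{TE}, $\Med$ satisfies (T$_2$) and (E$_2$) in addition to (A),(B),(C), so $\Med$ is simultaneously an ABCT$_2$- and an ABCE$_2$-function; it therefore remains only to prove uniqueness. By Lemma~\ref{lem:benz-E2-T2}, on benzenoids every ABCE$_2$-function is also an ABCT$_2$-function, so once $L=\Med_G$ is established for all ABCT$_2$-functions, the same conclusion follows verbatim for ABCE$_2$-functions. Thus I would fix an ABCT$_2$-function $L$ and a profile $\pi$, aiming to show $L(\pi)=\Med_G(\pi)$ via the two-inclusion template of Theorems~\ref{ABCT-gcm} and~\ref{th-G2conn}.

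For the inclusion $L(\pi)\subseteq\Med_G(\pi)$, I would pick $u\in L(\pi)$ and suppose $u\notin\Med_G(\pi)$. Since benzenoids have $G^2$-connected medians (\cite[Proposition~70]{GpConMed}), Theorem~\ref{th-cmed-p}(\ref{th-cmedp-0}) with $p=2$ supplies a vertex $v$ with $1\le d(u,v)\le 2$ and $F_{\pi}(v)<F_{\pi}(u)$. If $d(u,v)=1$, I would invoke Lemma~\ref{lem-L-wconv}(2): because $G$ is bipartite, $W_{uv}^{=}=\varnothing$ for every edge, so the (T)-dependent step (Lemma~\ref{lem-Wuveq}) is vacuous and Lemma~\ref{lem-L-wconv} applies to $L$ using only (A),(B),(C), yielding $u\notin L(\pi)$, a contradiction. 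If $d(u,v)=2$, I may additionally assume $F_{\pi}(w)\ge F_{\pi}(u)$ for every neighbour $w$ of $u$ (otherwise the previous case applies), whence $F_{\pi}$ is not pseudopeakless on $(u,v)$. By Lemma~\ref{lem-benz-c6}, $u$ and $v$ then lie in a common hexagon $C$, so $(u,v)$ is a 2-pair of $C$, and Lemma~\ref{lem-benz-F}(2) gives $u\notin L(\pi)$, again a contradiction.

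For the reverse inclusion $\Med_G(\pi)\subseteq L(\pi)$, I would assume it fails and choose $u\in L(\pi)$ and $v\in\Med_G(\pi)\setminus L(\pi)$ minimizing $d(u,v)$; since $\Med_G(\pi)$ is $G^2$-connected (Theorem~\ref{th-cmed-p}(\ref{th-cmedp-6}), $p=2$), $d(u,v)\le 2$, and $F_{\pi}(u)=F_{\pi}(v)$. The case $d(u,v)=1$ contradicts Lemma~\ref{lem-L-wconv}(1). For $d(u,v)=2$, minimality forces every $w\in I^{\circ}(u,v)$ out of $\Med_G(\pi)$ (otherwise $(u,w)$ or $(w,v)$ would be a strictly closer offending pair), so $F_{\pi}(w)>F_{\pi}(u)$ for all such $w$ and $F_{\pi}$ is again not pseudopeakless on $(u,v)$. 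Lemma~\ref{lem-benz-c6} then places $(u,v)$ inside a hexagon, and Lemma~\ref{lem-benz-F}(1) forces $v\in L(\pi)$, a contradiction. Hence $L(\pi)=\Med_G(\pi)$, so $G$ is an ABCT$_2$-graph and, by the reduction above, an ABCE$_2$-graph.

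The delicate point, already isolated into the supporting lemmas, is the treatment of 2-pairs, i.e.\ the case $d(u,v)=2$. Unlike in modular graphs with $G^2$-connected medians, $I^{\circ}(u,v)$ need not contain three vertices, and a triple of pairwise-distance-$2$ vertices of a hexagon has no median; this is exactly the obstruction that makes $C_6$ admit a second ABC-function (Proposition~\ref{th:L6-ABC}) and that forces the use of axiom (T$_2$). I expect the genuine work to be confined to Lemmas~\ref{lem-benz-c6} and~\ref{lem-benz-F}: the former localizes any failure of pseudopeaklessness to a single hexagon (using gatedness of hexagons and incomplete hexagons, Lemma~\ref{lem-benz-gated}), while the latter, proved by padding $\pi$ with suitable copies of hexagon vertices and combining gatedness with (T$_2$), lets (T$_2$) play the exact role that (T) plays in the connected-median argument of Lemma~\ref{lem-L-wconv}.
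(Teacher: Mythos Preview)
Your proposal is correct and follows essentially the same argument as the paper's own proof: the same two-inclusion template, the same appeal to $G^2$-connected medians via Theorem~\ref{th-cmed-p}, the same split into the edge case (handled by Lemma~\ref{lem-L-wconv}) and the 2-pair case (handled by Lemmas~\ref{lem-benz-c6} and~\ref{lem-benz-F}), and the same reduction of the ABCE$_2$ claim to the ABCT$_2$ claim via Lemma~\ref{lem:benz-E2-T2}. Your remark that Lemma~\ref{lem-L-wconv} applies to $L$ using only (A),(B),(C) because bipartiteness makes $W_{uv}^{=}=\varnothing$ (so the (T)-dependent Lemma~\ref{lem-Wuveq} is never invoked) is a helpful clarification that the paper leaves implicit.
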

\begin{proof}
  Let $L$ be an ABCT$_2$-function on a benzenoid graph $G$ and let
  $\pi\in V^*$.  We first show that $L(\pi) \subseteq
  \Med_G(\pi)$. Pick any $u \in L(\pi)$.  If $u\notin \Med(\pi)$,
  since benzenoids have $G^2$-connected medians, by
  Theorem~\ref{th-cmed-p} there exists a vertex $v$ such that
  $1\leq d(u,v) \leq 2$ and $F_{\pi}(v)<F_{\pi}(u)$.  Since $G$ is
  bipartite, $G$ satisfies the triangle condition TC.  Therefore, if
  $d(u,v)=1$, we obtain a contradiction with
  Lemma~\ref{lem-L-wconv}(2). Thus, we can suppose that $d(u,v)=2$ and
  that $F_{\pi}(w)\ge F_{\pi}(u)$ for any neighbor $w$ of $u$. By
  Lemma~\ref{lem-benz-c6} applied to the pair $(u,v)$, $u,v$ belong to
  the same hexagon and consequently by Lemma~\ref{lem-benz-F}(2), $u$
  cannot belong to $L(\pi)$, a contradiction.

  Now we show the converse inclusion $\Med(\pi)\subseteq L(\pi)$.  We
  suppose that there exist $u \in L(\pi)$ and
  $v\in \Med(\pi) \setminus L(\pi)$ minimizing $d(u,v)$.  Since
  $\Med(\pi)$ is $G^2$-connected, $d(u,v) \leq 2$.  Since
  $u, v \in \Med(\pi)$, $F_{\pi}(u) = F_{\pi}(v)$.  If $d(u,v) = 1$,
  we obtain a contradiction with Lemma~\ref{lem-L-wconv}(1).  If
  $d(u,v)=2$, by our choice of $u$ and $v$, we must have
  $F_{\pi}(w)>F_{\pi}(u)$ for any $w\in I^{\circ}(u,v)$.  Thus, by
  Lemma~\ref{lem-benz-c6}, $u$ and $v$ belong to the same $C_6$ and we
  obtain a contradiction with Lemma~\ref{lem-benz-F}. Consequently,
  benzenoids are ABCT$_2$-graphs and by Lemma~\ref{lem:benz-E2-T2},
  they are ABCE$_2$-graphs.
\end{proof}

\section{Perspectives}

In this paper, we considerably extended the classes of graphs whose
median function can be characterized by a set of simple
axioms. Namely, we proved that graphs with connected medians are
ABCT-graphs, the benzenoids graphs are ABCT$_2$-graphs, and that
modular graphs with $G^2$-connected medians are ABC-graphs. One
important subclass of modular graphs with $G^2$-connected medians are
bipartite Helly graphs. We showed that the graphs with the pairing or
double-pairing property are proper subclasses of bipartite Helly
graphs. Median graphs -- another previously known class of ABC-graphs
-- are also modular and have connected medians.

The problem of characterizing ABC-graphs remains open. In view of our
results, it is perhaps plausible (but dangerous) to conjecture that
ABC-graphs are exactly the modular graphs with $G^2$-connected
medians. However, proving that ABC-graphs are bipartite or modular was
out of reach for us (due to the weakness of the axioms (A), (B), and
(C)). We also failed to prove that $G$ is an ABC-graph or an
ABCT-graph if and only if each 2-connected component of $G$ is.  On
the opposite side, one can ask if the question of deciding if $G$ is
an ABC-graph or ABCT-graph is decidable. In~\cite{GpConMed} we showed
that a modular graph has $G^2$-connected medians if and only if this
holds for each interval $I(u,v)$ with $3\le d(u,v)\le 4$ and any
profile $\pi$ included in $I(u,v)$.  Furthermore, we presented
sufficient combinatorial conditions on 3- and 4-intervals under which
a modular graph $G$ has $G^2$-connected medians.  However, we do not
know if modular graphs with $G^2$-connected medians can be
characterized combinatorially by considering only profiles of bounded
size included in $I(u,v)$. Bridged (and, in particular, chordal
graphs) are graphs with $G^2$-connected medians. However, it is open
to us if they are ABCT-graphs.

We proved that the problem of deciding if a graph satisfies the double
pairing property is in co-NP. However we do not know if one can decide
in polynomial time (or even in non-deterministic polynomial time) if a
graph satisfies the pairing or the double-pairing property.

\subsection*{Acknowledgements} We are grateful to the referee for a
careful reading of the first version and useful comments.  This work
was partially supported by ANR project DISTANCIA
(ANR-17-17-CE40-0015).

\end{document}